\documentclass{amsart}
 
 \usepackage{amsmath}
\usepackage{amssymb}
\usepackage{amsfonts}

\usepackage{alltt}
\usepackage{amsthm}
\usepackage{bm}
\usepackage{mathrsfs}
\usepackage{graphicx}
\usepackage{color}
\usepackage{mathtools}
\usepackage[T1]{fontenc}
\usepackage[utf8]{inputenc}

\usepackage{graphicx} 
	\usepackage{tikz,pgfplots}

\newcommand{\id}{\operatorname{Id}}

\newcommand{\C}{\mathbb C}
\newcommand{\R}{\mathbb R}
\newcommand{\N}{\mathbb N}
\newcommand{\Z}{\mathbb Z}
\newcommand{\ES}{\mathbb S}

\newcommand{\de}{\, d}

\newcommand{\re}{\operatorname{Re}}
\newcommand{\im}{\operatorname{Im}}
\newcommand{\ve}{\varepsilon}
\newcommand{\hdot}{\bm{\ldotp}}

\newcommand{\spec}{\operatorname{Spec}}
\newcommand{\Arg}{\operatorname{Arg}}

\newcommand{\tr}{\operatorname{tr}}

\numberwithin{equation}{section}

\newtheorem{theorem}{Theorem}[section]
\newtheorem{proposition}[theorem]{Proposition}
\newtheorem{lemma}[theorem]{Lemma}

\newtheorem*{thm*}{Theorem}
\newtheorem*{lemma*}{Lemma}
\newtheorem*{prop*}{Proposition}

\theoremstyle{definition}
\newtheorem{definition}[theorem]{Definition}
\newtheorem{example}[theorem]{Example}

\newtheorem*{dfn*}{Definition}

\theoremstyle{remark}
\newtheorem{remark}[theorem]{Remark}


\makeatletter
\let\save@mathaccent\mathaccent
\newcommand*\if@single[3]{%
  \setbox0\hbox{${\mathaccent"0362{#1}}^H$}%
  \setbox2\hbox{${\mathaccent"0362{\kern0pt#1}}^H$}%
  \ifdim\ht0=\ht2 #3\else #2\fi
  }
\newcommand*\rel@kern[1]{\kern#1\dimexpr\macc@kerna}
\newcommand*\widebar[1]{\@ifnextchar^{{\wide@bar{#1}{0}}}{\wide@bar{#1}{1}}}
\newcommand*\wide@bar[2]{\if@single{#1}{\wide@bar@{#1}{#2}{1}}{\wide@bar@{#1}{#2}{2}}}
\newcommand*\wide@bar@[3]{%
  \begingroup
  \def\mathaccent##1##2{%
    \let\mathaccent\save@mathaccent
    \if#32 \let\macc@nucleus\first@char \fi
    \setbox\z@\hbox{$\macc@style{\macc@nucleus}_{}$}%
    \setbox\tw@\hbox{$\macc@style{\macc@nucleus}{}_{}$}%
    \dimen@\wd\tw@
    \advance\dimen@-\wd\z@
    \divide\dimen@ 3
    \@tempdima\wd\tw@
    \advance\@tempdima-\scriptspace
    \divide\@tempdima 10
    \advance\dimen@-\@tempdima
    \ifdim\dimen@>\z@ \dimen@0pt\fi
    \rel@kern{0.6}\kern-\dimen@
    \if#31
      \overline{\rel@kern{-0.6}\kern\dimen@\macc@nucleus\rel@kern{0.4}\kern\dimen@}%
      \advance\dimen@0.4\dimexpr\macc@kerna
      \let\final@kern#2%
      \ifdim\dimen@<\z@ \let\final@kern1\fi
      \if\final@kern1 \kern-\dimen@\fi
    \else
      \overline{\rel@kern{-0.6}\kern\dimen@#1}%
    \fi
  }%
  \macc@depth\@ne
  \let\math@bgroup\@empty \let\math@egroup\macc@set@skewchar
  \mathsurround\z@ \frozen@everymath{\mathgroup\macc@group\relax}%
  \macc@set@skewchar\relax
  \let\mathaccentV\macc@nested@a
  \if#31
    \macc@nested@a\relax111{#1}%
  \else
    \def\gobble@till@marker##1\endmarker{}%
    \futurelet\first@char\gobble@till@marker#1\endmarker
    \ifcat\noexpand\first@char A\else
      \def\first@char{}%
    \fi
    \macc@nested@a\relax111{\first@char}%
  \fi
  \endgroup
}
\makeatother


\makeatletter
\def\@setauthors{%
  \begingroup
  \def\thanks{\protect\thanks@warning}%
  \trivlist
  \centering\large \@topsep30\p@\relax
  \advance\@topsep by -\baselineskip
  \item\relax
  \author@andify\authors
  \def\\{\protect\linebreak}%
  \authors%
  \ifx\@empty\contribs
  \else
    ,\penalty-3 \space \@setcontribs
    \@closetoccontribs
  \fi
  \endtrivlist
  \endgroup
}
\def\@settitle{\begin{center}%
  \baselineskip14\p@\relax
    \normalfont\LARGE
  \@title
  \end{center}%
}
\makeatother

\pgfplotsset{compat=1.13}

\begin{document}

\title[Periodic Zakharov-Shabat systems]{Quantization conditions of eigenvalues for semiclassical \\ Zakharov-Shabat systems on the circle}

\date{\today}

\author{Setsuro Fujii\'e}
\address[Setsuro Fujiie]{Department of Mathematical Sciences, Ritsumeikan University, Ku\-sa\-tsu, 525-8577, Japan}
\email{fujiie@fc.ritsumei.ac.jp}

\author{Jens Wittsten}
\address[Jens Wittsten]{Department of Mathematical Sciences, Ritsumeikan University, Ku\-sa\-tsu, 525-8577, Japan}
\curraddr[Jens Wittsten]{Centre for Mathematical Sciences, Lund University, Box 118, 221 00, Lund, Sweden}
\email{jensw@maths.lth.se}

\subjclass[2010]{34L40 (primary), 
81Q20 (secondary)}

\keywords{Zakharov-Shabat system, eigenvalues, quantization condition, exact WKB method, transition matrix}

\begin{abstract}
Bohr-Sommerfeld type quantization conditions of semiclassical eigenvalues for the non-selfadjoint Zakharov-Shabat operator on the unit circle are derived using an exact WKB method. The conditions are given in terms of the action associated with the unit circle or the action associated with turning points following the absence or presence of real turning points.
\end{abstract}

\maketitle

\section{Introduction}

We consider the eigenvalue problem
\begin{equation}
\label{evp}
P(h)u=\lambda u
\end{equation}
 for the Zakharov-Shabat operator
\begin{equation*}
P(h)=\bigg( \begin{array}{cc} -hD_x & iV(x)\\ iV(x) & hD_x\end{array} \bigg),\quad D_x=-i\partial/\partial x,
\end{equation*}
where $u$ is a column vector, $h$ is a small positive parameter, $\lambda$ is a spectral parameter, and $V$ is a real valued analytic function on $\ES^1=\R/2\pi\Z$.
The eigenvalue problem \eqref{evp} appears in the inverse scattering method for the initial value problem for the focusing nonlinear Schr\"odinger equation as one half of the Lax pair \cite{shabat1972exact}.
It will also be written in the form
\begin{equation}\label{system}
\frac{h}{i}\frac{du}{dx}=M(x,\lambda)u,\quad M(x,\lambda)=\bigg( \begin{array}{cc} -\lambda & iV(x)\\ -iV(x) & \lambda\end{array} \bigg).
\end{equation}

The operator $P(h)$ is not selfadjoint. To study  the spectrum $\spec P(h)$, let
\begin{equation}\label{symbolP}
P(x,\xi)=\bigg( \begin{array}{cc} -\xi & iV(x)\\ iV(x) & \xi\end{array} \bigg)
\end{equation}
be the  semiclassical symbol of the operator $P(h)$. 
We define the closure of the set of eigenvalues of $P(x,\xi)$ by 
\begin{equation*}
\Sigma(P)=\overline{\{\lambda\in \C:\exists (x,\xi)\in T^\ast \ES^1,\ \det(P(x,\xi)-\lambda\id)=0\}},
\end{equation*}
where $\det(A)$ denotes the determinant of the matrix $A$.  
Thus, in our case
\begin{equation*}
\Sigma(P)
=\R\bigcup i[-V_0,V_0],
\end{equation*}
where $V_0:=\max_{x\in \ES^1} \lvert V(x)\rvert$.
By Proposition \ref{prop:spectrumrestriction} below, the spectrum of $P(h)$ is discrete and concentrates on $\Sigma(P)$ as $h\to0$.
Hence, to study its asymptotic form one can assume that the spectral parameter $\lambda$ belongs to a small neighborhood of $\R\bigcup i[-V_0,V_0]$.

Before stating our main results, we recall that the roots of $\det(M(x,\lambda))=0$, or equivalently,
\begin{equation*}
V(x)^2+\lambda^2=0,
\end{equation*}
are called {\it turning points} of the system \eqref{system}. A zero of order $n$ of $V^2+\lambda^2$ is called a turning point of order $n$. When $n=1$ or $n=2$, the turning point is said to be simple or double, respectively. When $\lambda=0$, the simple zeros of $V$ are double turning points. Double (or higher order) turning points also occur when $\lambda=i\mu$ and $\pm\mu$ is a local extreme value of $V$.  There are no real turning points for non-zero real $\lambda$, and if $V_1:=\min_{x\in \ES^1} |V(x)|$ is strictly positive, then there are no real turning points for $\lambda=i\mu$ with $-V_1< \mu<V_1$.

To describe the asymptotic distribution of eigenvalues in the semiclassical limit, 
we fix $\lambda_0\in \R\bigcup i[-V_0,V_0]$ and study the quantization condition of eigenvalues in a small complex neighborhood of $\lambda_0$.
The form of the quantization condition depends on whether there are real turning points for $\lambda_0$.
We begin with the turning point free case, which is considerably easier. Let $B_\ve(\lambda_0)$ denote the disc of radius $\ve>0$ centered at $\lambda_0$.
We then have the following Bohr-Sommerfeld type quantization condition
associated with the action integral along $\ES^1$ given by
\begin{equation*}
I(\lambda)=\int_0^{2\pi}(V(t)^2+\lambda^2)^{1/2}\de t.
\end{equation*}

\begin{theorem}\label{thm:A}
Let $\lambda_0\in\R\setminus \{0\}$ if $V_1=0$ or $\lambda_0\in\R\bigcup i(-V_1,V_1)$ if $V_1>0$.
Then there exists an $\varepsilon>0$ and a function $r_1(\lambda,h)$, 
analytic with respect to $\lambda$ in $B_\ve(\lambda_0)$ and uniformly of $O(h)$ as $h\to0$, such that $\lambda\in B_\ve(\lambda_0)$ is an eigenvalue of $P(h)$ if and only if 
\begin{equation}\label{eq:theoremA}
2\cos (I/h)+r_1(\lambda,h)=2.
\end{equation}
In particular, for any small $h$ there is an integer $k\in\Z$ such that
\begin{equation}\label{quantizationcondition1}
I(\lambda)=2\pi kh+O(h^2).
\end{equation}
\end{theorem}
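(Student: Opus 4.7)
My plan is to convert the eigenvalue condition into a scalar equation for the monodromy matrix, evaluate this matrix asymptotically via exact WKB, and derive the quantization from algebra using the unimodularity of the monodromy. Since $M(x,\lambda)$ is trace-free, the fundamental matrix $F(x,\lambda,h)$ of \eqref{system} normalized by $F(0,\lambda,h)=\id$ satisfies $\det F(x,\lambda,h)=1$. Periodicity of $u$ on $\ES^1$ requires $F(2\pi,\lambda,h)$ to fix a non-zero vector, equivalently (since $\det F(2\pi,\cdot)=1$) to satisfy $\tr F(2\pi,\lambda,h)=2$. The problem thus reduces to an asymptotic computation of this trace.

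Under the hypothesis on $\lambda_0$, the function $p(x,\lambda):=(V(x)^2+\lambda^2)^{1/2}$ is analytic and non-vanishing for $(x,\lambda)$ in a complex neighborhood of $\ES^1\times\{\lambda_0\}$; fix a branch and set $S(x,\lambda):=\int_0^x p(t,\lambda)\de t$, so $I(\lambda)=S(2\pi,\lambda)$. Diagonalize $M$ by an analytic $2\pi$-periodic matrix $N(x,\lambda)$ with $N^{-1}MN=p\operatorname{diag}(1,-1)$. An exact WKB construction (Volterra iteration for the phase-factored amplitudes) produces exact solutions
\begin{equation*}
u^{(\pm)}(x,\lambda,h)=e^{\pm iS(x,\lambda)/h}N(x,\lambda)\bigl[e_\pm+h\,R_\pm(x,\lambda,h)\bigr]
\end{equation*}
of \eqref{system}, with $R_\pm$ analytic in $\lambda$ and uniformly bounded in $(x,h)$. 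Expressing $u^{(\pm)}(2\pi,\cdot)$ in the basis $\{u^{(+)}(0,\cdot),u^{(-)}(0,\cdot)\}$ yields a transition matrix
\begin{equation*}
T(\lambda,h)=\begin{pmatrix} e^{iI/h}+A & B \\ C & e^{-iI/h}+D\end{pmatrix},
\end{equation*}
analytic in $\lambda$, with entries $A,B,C,D=O(h)$, and conjugate to $F(2\pi,\lambda,h)$; in particular $\tr T=\tr F(2\pi,\cdot)$ and $\det T=1$.

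Setting $r_1(\lambda,h):=A+D$, the identity $\tr T=2\cos(I/h)+r_1$ turns the eigenvalue condition $\tr T=2$ into \eqref{eq:theoremA}, with $r_1$ analytic in $\lambda$ and $O(h)$. To extract \eqref{quantizationcondition1}, expand $\det T=1$ to obtain $e^{iI/h}D+e^{-iI/h}A=BC-AD=O(h^2)$; this yields $A=-e^{2iI/h}D+O(h^2)$, whence
\begin{equation*}
r_1=-2i De^{iI/h}\sin(I/h)+O(h^2).
\end{equation*}
Pick $k\in\Z$ with $\phi:=I/h-2\pi k$ minimal in absolute value; Taylor-expanding \eqref{eq:theoremA} in $\phi$ gives $\phi^2-c\phi=O(h^2)+O(\phi^4)+O(h\phi^3)$, where $c(\lambda,h):=2iDe^{iI/h}=O(h)$ (provided $\ve$ is chosen small enough that $e^{\pm iI/h}$ remain bounded on $B_\ve(\lambda_0)$). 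The leading quadratic $\phi^2-c\phi-O(h^2)=0$ has roots of size $O(h)$, and the nonlinear corrections are negligible by the implicit function theorem, so every eigenvalue $\lambda\in B_\ve(\lambda_0)$ satisfies $\phi=O(h)$, which is exactly \eqref{quantizationcondition1}.

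The main technical obstacle is the exact WKB construction underpinning the explicit form of $T$: proving convergence of the Volterra iteration uniformly in $h\in(0,h_0]$ and analytically in $\lambda$, and then identifying the diagonal entries of $T$ with $e^{\pm iI/h}$ up to an $O(h)$ correction. In the turning-point-free regime considered here, $p$ is bounded away from zero on $\ES^1$ and $N$ can be chosen periodic, so the iteration converges without particular delicacy; the only care required is in the consistent choice of branches of $p$ and $N$ to preserve analyticity in $\lambda$ and periodicity in $x$. Once $T$ is in hand, the derivation of \eqref{eq:theoremA} and \eqref{quantizationcondition1} is routine algebra based on the unimodularity of $T$ and Taylor expansion.
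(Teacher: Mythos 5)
Your proposal follows the same global strategy as the paper: reduce the eigenvalue condition to $\tr T = 2$ for the period transition matrix, compute $T$ via exact WKB, then extract the quantization condition. The WKB formalism you use (diagonalization by a periodic $N(x,\lambda)$ rather than the paper's explicit $Q$ in \eqref{defQ}) is a cosmetic variant, and your form of $T$ agrees with Proposition \ref{prop:Tsimplecase} once the exponential smallness of $t_{12},t_{21}$ is relaxed to $O(h)$. Two points, however, deserve comment.

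The step from \eqref{eq:theoremA} to \eqref{quantizationcondition1} is where you genuinely diverge from the paper, and where you have a small gap. The paper multiplies by $e^{iI/h}/t_{11}$, completes the square using $t_{11}t_{22}-t_{12}t_{21}=1$, and factors $(e^{iI/h}-(1+ic)/t_{11})(e^{iI/h}-(1-ic)/t_{11})=0$ with $c=(t_{12}t_{21})^{1/2}$ exponentially small; this gives $e^{iI/h}=1+O(h)$ outright and \eqref{quantizationcondition1} follows by taking logarithms. You instead use $\det T=1$ to get $r_1=-2iDe^{iI/h}\sin(I/h)+O(h^2)$ and Taylor-expand in $\phi:=I/h-2\pi k$. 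But the relation $\phi^2-c\phi=O(h^2)+O(\phi^4)+O(h\phi^3)$ is vacuous when $|\phi|\asymp 1$: the ``error terms'' $O(\phi^4)$ are then $O(1)$ and the implicit function theorem does not apply. You must first obtain an a priori bound: from $2\cos\phi=2-r_1=2+O(h)$ and $1-\cos\phi\gtrsim\phi^2$ on $|\phi|\le\pi$ one gets $\phi=O(h^{1/2})$, after which your quadratic bootstrap closes and gives $\phi=O(h)$. Once this is inserted your route works and is, in fact, slightly more economical than the paper's in that it uses only the $O(h)$ bounds on $B,C$ (via $BC-AD=O(h^2)$) rather than the exponential smallness that the paper secures through its choice of amplitude base points.

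Finally, the claim that the Volterra iteration for the amplitudes ``converges without particular delicacy'' understates the one genuinely delicate point in the turning-point-free regime: for $\lambda\in B_\ve(\lambda_0)$ with $\im\lambda\neq0$, the kernel in the Volterra iteration carries a factor $e^{\pm2i(S(x)-S(t))/h}$ whose modulus can grow like $e^{C\ve/h}$ along real paths, so neither uniform boundedness of $R_\pm$ nor the $O(h)$ amplitude estimate follows from non-stationary phase alone. The paper handles this by locating the amplitude base points $y_0$ and $\bar y_0$ off the real axis and computing the entries of $T$ only through Wronskians evaluated at points reached by paths along which $\pm\re z$ is strictly increasing (Remark \ref{h-asymptotic}, §2.2, Proposition \ref{prop:Tsimplecase}). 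Your proposal needs the same path-deformation bookkeeping to justify the asserted uniformity of $R_\pm$; without it the boundedness of $A,B,C,D$ on all of $B_\ve(\lambda_0)$ is not established.
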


Next we consider $\lambda_0=i\mu_0$, $\mu_0\in (-V_0,-V_1)\bigcup  (V_1,V_0)$, for which there is at least one real turning point. 
In view of the symmetry of the eigenvalues with respect to the real axis (Lemma \ref{symmetrylemma}), it is enough to consider $\mu_0\in (V_1,V_0)$. 
In this paper, we only consider those $\lambda_0\in i(V_1,V_0)$ for which the turning points are all simple.
This implies that the turning points for $\lambda=i\mu$ with $\mu$ close enough to $\mu_0$ stay simple and depend analytically on $\mu$ (in what follows, $\mu$ is reserved for the complex number defined via $\lambda=i\mu$).
For each such $\mu$, there are an even number of turning points $\{x_j(\mu)\}_{j=1}^{2l}$ which, for real $\mu$, are real  and ordered by $0\le  x_1(\mu)<x_2(\mu)<\ldots <x_{2l}(\mu)<2\pi$.
Assuming without loss of generality that $V(0)=V_0$, we have that $x_1(\mu_0)$ is positive and $V(x)^2> \mu_0^2$ for $x_{2j}(\mu_0)<x<x_{2j+1}(\mu_0)$,
$1\le j\le l$, with the convention $x_{2l+1}=x_1$.
We define two kinds of action integrals $S_j(\mu)$ and $I_j(\mu)$ for $1\le j\le l$,
which are real for real $\mu$ close to $\mu_0$, by
\begin{equation}\label{actionS}
S_j(\mu)=\int_{x_{2j-1}(\mu)}^{x_{2j}(\mu)}(\mu^2-V(t)^2)^{1/2}\de t,
 \quad  
I_j(\mu)=\int_{x_{2j}(\mu)}^{x_{2j+1}(\mu)}(V(t)^2-\mu^2)^{1/2}\de t.
\end{equation}

\begin{theorem}\label{thm:B}
Suppose $\lambda_0=i\mu_0$, $\mu_0\in  (V_1,V_0)$, and that the turning points are all simple.
Then there exists an $\varepsilon>0$ and a function $r_2(\lambda,h)$, 
analytic with respect to $\lambda$ in $B_\ve(\lambda_0)$ and uniformly of $O(h)$ as $h\to0$, such that $\lambda\in B_\ve(\lambda_0)$ is an eigenvalue of $P(h)$ if and only if 
\begin{equation}\label{eq:theoremB}
\exp\bigg(\sum_{j=1}^l S_j/h\bigg)\bigg(2^l\prod_{j=1}^l\cos(I_j/h)+r_2(\lambda,h)\bigg)=2.
\end{equation}
In particular, for any small $h$ there are integers $1\le j\le l$ and $k\in\Z$ such that
\begin{equation}\label{quantizationcondition2}
I_j(\mu)=\big(\tfrac{1}{2}+k\big)\pi h+O\big(h^{1+1/l}\big).
\end{equation}
\end{theorem}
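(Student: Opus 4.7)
The plan is to combine the exact WKB method with a monodromy computation around $\mathbb{S}^1$. First I would construct, near each simple turning point $x_j(\mu)$ with $1 \le j \le 2l$, a pair of exact WKB solutions $(w_j^+, w_j^-)$ of the system \eqref{system} normalized at $x_j(\mu)$. Since the turning points are simple and depend analytically on $\mu$, these Borel-resummed bases can be chosen to depend analytically on $\lambda$ in a complex disc $B_\ve(\lambda_0)$ and to be valid on Stokes sectors covering the adjacent real subintervals of $\ES^1$.

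The second step is to compute, by analytic continuation along the real axis, the transition matrices relating consecutive bases. Across an oscillatory interval $[x_{2j}(\mu),x_{2j+1}(\mu)]$ where $V^2>\mu^2$, the transition is diagonal to leading order, with entries $e^{\pm iI_j/h}$, producing the cosine factors under the trace. Across a tunneling interval $[x_{2j-1}(\mu),x_{2j}(\mu)]$ where $\mu^2>V^2$, the transition is dominated by the exponentially growing factor $e^{S_j/h}$. At each turning point itself, the connection between the incoming and outgoing bases is given by a fixed matrix coming from the Airy-type local model. Multiplying all these matrices in order around $\ES^1$ yields a monodromy matrix $\mathcal{M}(\lambda,h)$, and the periodicity requirement for eigenvectors of $P(h)$ reads $\det(\mathcal{M}-\id)=0$. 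Because the Wronskian is conserved we have $\det\mathcal{M}=1$, so the condition collapses to $\tr \mathcal{M}=2$, and extracting the leading-order entries in the matrix product gives precisely \eqref{eq:theoremB}, with $r_2(\lambda,h)$ collecting the uniformly $O(h)$ remainders from the exact WKB constructions.

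The implication \eqref{quantizationcondition2} is then elementary. Since $\sum_{j=1}^l S_j(\mu_0)>0$, the prefactor $\exp(\sum_jS_j/h)$ is exponentially large, so \eqref{eq:theoremB} forces
\begin{equation*}
2^l \prod_{j=1}^l \cos(I_j/h) = -r_2(\lambda,h)+2\exp\bigl(-\textstyle\sum_jS_j/h\bigr)=O(h).
\end{equation*}
Therefore $\min_{1\le j\le l}\lvert\cos(I_j(\mu)/h)\rvert\le Ch^{1/l}$, and a first-order Taylor expansion of the cosine near one of its zeros yields $I_j(\mu)=(\tfrac12+k)\pi h+O(h^{1+1/l})$ for some $k\in\Z$.

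The main obstacle will be the consistent bookkeeping of the $2l$ connection and transition matrices: choices of branch cuts for $(V^2+\lambda^2)^{1/2}$, labeling of Stokes sectors around each simple turning point, and the normalization of each pair $(w_j^+,w_j^-)$ must be mutually compatible so that the product closes up around $\ES^1$ and the leading term emerges in the symmetric form displayed in \eqref{eq:theoremB}. A secondary difficulty is to justify that $r_2(\lambda,h)$ extends analytically in $\lambda$ to the full disc $B_\ve(\lambda_0)$, but this follows from the analytic dependence of the exact WKB solutions on parameters, which is available as long as the turning points remain simple and distinct — a property guaranteed by our assumption on $\mu_0$ after possibly shrinking $\ve$.
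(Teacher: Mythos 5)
Your high-level plan — build WKB bases near the turning points, form transition matrices across the oscillatory and tunneling intervals, multiply them around $\ES^1$, invoke $\det\mathcal{M}=1$ so that the eigenvalue condition becomes $\tr\mathcal{M}=2$, and then extract \eqref{quantizationcondition2} from the exponential dominance of $\exp(\sum S_j/h)$ — matches the paper's strategy (Propositions \ref{prop:persol}, \ref{prop:trace} and the last step of the proof). The concluding elementary step is identical. Your method of connecting across turning points via Airy-type local models differs from the paper, which works entirely with exact WKB solutions and computes Wronskians by passing to neighboring sheets of the Riemann surface (Lemmas \ref{lemma:sheet} and \ref{rewritinglemma}); both routes are in principle viable for simple turning points.

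However, there is a genuine gap. You assert that "at each turning point itself, the connection between the incoming and outgoing bases is given by a fixed matrix coming from the Airy-type local model," and you relegate the rest to "consistent bookkeeping." For the Zakharov--Shabat system this is not true: the turning points come in two distinct species, zeros of $V-\mu$ and zeros of $V+\mu$. The phase $z(x)$ is symmetric in the two factors, but the amplitude $H(z(x))=\big((iV+\lambda)/(iV-\lambda)\big)^{1/4}$ in \eqref{defQ} is not — it blows up at zeros of $V-\mu$ and vanishes at zeros of $V+\mu$. Consequently the local connection matrix is not the same for the two species, and the transition across a tunneling interval $[x_{2j-1},x_{2j}]$ can be of four different types ($1^\circ$--$4^\circ$), with $O(1)$ leading matrices
\begin{equation*}
E_1=\bigg( \begin{array}{cc} 1& i\\ -i& 1\end{array} \bigg),\quad
E_2=\bigg( \begin{array}{cc} 1& i\\ i& -1\end{array} \bigg),\quad
E_3=\bigg( \begin{array}{cc} 1& -i\\ -i& -1\end{array} \bigg),\quad
E_4=\bigg( \begin{array}{cc} 1& -i\\ i& 1\end{array} \bigg),
\end{equation*}
that are \emph{not} all conjugate or otherwise interchangeable in the product. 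So it is not automatic that the product closes up to the symmetric form in \eqref{eq:theoremB}. The paper needs a separate combinatorial argument (Proposition \ref{prop:trace}) showing that, for every admissible sequence of transition types (and not all sequences are admissible — e.g.\ a $4^\circ$ cannot immediately follow a $1^\circ$), the trace of the product is nevertheless $\exp\big(\sum S_j/h\big)\big(2^l\prod\cos(I_j/h)+O(h)\big)$. Your proposal neither identifies this obstruction nor supplies an argument to resolve it; without it, the step "the leading term emerges in the symmetric form displayed in \eqref{eq:theoremB}" is unjustified, and this is precisely the point the paper flags as the main difference from the Schrödinger case.
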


This work was first inspired by the paper by Galtsev and Shafarevich \cite{Galtsev2006} who treated the Schr\"odinger operator with complex potential $\mathfrak{D}=-h^2d^2/dx^2+i\cos x$ on $\ES^1$. They showed that the spectrum of this non-selfadjoint operator concentrates on a rotated `Y' shape in the semiclassical limit $h\to0$ while the set $\Sigma(\mathfrak{D})$
is the half band $\{\lambda\in \C: \re \lambda\ge 0,\ -1\le \im\lambda\le 1\}$.
 This fact has also been used by Dyatlov and Zworski \cite{dyatlov2015stochastic} to provide a negative example of stochastic stability of resonances in the context of Anosov flows.
The numerical range $\Sigma(P)$ of our operator $P(h)$, on the contrary, is included in the real and imaginary axes and the spectrum $\spec P(h)$ concentrates
on the whole numerical range. Thus, $P(h)$ does not share the behavior of $\mathfrak{D}$ whose spectrum concentrates on a thin subset of $\Sigma(\mathfrak{D})$. On the other hand, the fact that $\Sigma(P)$ is just lines actually specifies the Stokes geometry near the real axis, which allows us to treat general potentials.

To obtain the quantization conditions, we use the exact WKB method along the lines of \cite{fujiie2009semiclassical}, first introduced by Ecalle \cite{ecalle1984cinq} and used by G\'erard and Grigis \cite{gerard1988precise} to study the Schr\"odinger operator. An exact WKB solution is a convergent resummation of the WKB asymptotic expansion in a turning-point-free complex region,
and the connections of such solutions in different regions via the Wronskian formula (see \S2.2) enable us to get the global asymptotic behavior of solutions which leads to the quantization condition of eigenvalues.
The asymptotic property of the exact WKB  solution is valid only away from turning points, and this prevents us from computing the Wronskian between two exact WKB solutions
whose common region of validity is pinched by two turning points close to each other
(of distance $O(h)$). That is why we exclude energies near $\lambda_0$ for which $V$ has double or higher order turning points.

The main difference compared with the Schr\"odinger case is that for our Zakharov-Shabat operator there are two types of turning points, ones of which are the zeros of $V(x)-\mu$ and the others
of which are the zeros of $V(x)+\mu$. 
The phase function of the WKB solutions is a primitive function of the square root of their product $V(x)^2-\mu^2$ while the amplitude  is a function of  the fourth root of their quotient (see \eqref{defQ}).
This means that the same Stokes geometry, determined by the phase function, may produce different so-called transition matrices (describing the connections between WKB solutions in different domains) according to the type of intermediate turning points.
However, it turns out that the trace of the transition matrix is essentially unaffected (see Proposition \ref{prop:trace}) which explains why there is no mention in Theorem \ref{thm:B} of the type of turning points involved. This is not the case in the recent work by Hirota \cite{HirotaZS} about the eigenvalue problem for a semiclassical Zakharov-Shabat operator (corresponding to the defocusing nonlinear Schr\"odinger equation) on the real line. In \cite{HirotaZS}, two cases are studied: a simple well potential (two turning points of the same type), and a monotone potential (two turning points of opposite type), resulting in different quantization conditions for the corresponding Zakharov-Shabat system. In our study, there are necessarily an even number of turning points by periodicity, which corresponds (in spirit) to the case of a simple well potential in \cite{HirotaZS}.

Here we also mention the study of Gr{\'e}bert and Kappeler \cite{grebert2001estimates} of the periodic eigenvalues of a Zakharov-Shabat operator in the high-energy regime. The problem in the high-energy limit is equivalent to that in the semiclassical limit with a fixed positive energy and with a potential of order $h$. This small potential can be regarded as a small perturbation which does not affect the principal asymptotics of the eigenvalue distribution, and a slight modification of Theorem 1.1 would give \eqref{quantizationcondition1} with $I(\lambda)=2\pi\lambda$, the action for $V=0$.

The study of non-selfadjoint Zakharov-Shabat systems on the real line has a long history in connection with inverse scattering theory. For the study in the semiclassical limit, we refer to the book by Kamvissis, McLaughlin and Miller \cite{kamvissis2003semiclassical}. Real energies belong to the continuous spectrum and the reflection coefficient is relevant. Energies near the imaginary axis consist of eigenvalues when $V$ decays at infinity, and has a band structure when $V$ is periodic (see for example Korotyaev and Kargaev \cite{korotyaev2010estimates}).

The paper is organized as follows.
The exact WKB method is reviewed in Section \ref{section:preliminaries}, while Section \ref{section:proofA} contains the proof of Theorem \ref{thm:A}. 
The proof of Theorem \ref{thm:B} is the content of Section \ref{section:proofB}. A crucial detail is the computation of structure formulas for the transition matrices, which for Theorem \ref{thm:B} is a much more involved and delicate affair. These computations can be found in Section \ref{section:sheets}.

\section{Preliminaries}\label{section:preliminaries}

We identify $\ES^1$ with the fundamental domain $[0,2\pi)\subset\R$, and $V$ with a $2\pi$-periodic function on $\R$.
In this context we regard the symbol $P(x,\xi)$ in \eqref{symbolP} as a function on $T^\ast\R$ which is periodic in $x$,
and $P(h)$ as an operator acting on vector-valued periodic functions on $\R$ belonging to $L^2([0,2\pi),\C^2)$.
As mentioned in the introduction, 
for small $h>0$ it suffices to study the case when $\lambda$ belongs to a small neighborhood of the set of eigenvalues of the symbol $P(x,\xi)$.

\begin{proposition}\label{prop:spectrumrestriction}
For $z$ in the resolvent set, $(P(h)-z\id)^{-1}$ is a compact operator and $P(h)$ has discrete spectrum.
Moreover, if $\Omega$ is an open connected set such that 
$\overline{\Omega}\bigcap\Sigma(P)=\emptyset$, then $(P(h)-z\id)^{-1}$ is a holomorphic function of $z\in\Omega$ provided that $h$ is sufficiently small.
\end{proposition}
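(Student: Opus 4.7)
My plan is to establish the proposition in two parts: the discrete-spectrum assertion via compactness of the resolvent after exhibiting a single resolvent point, and the holomorphic $z$-dependence via a semiclassical parametrix. The starting observation is that $P(h)$ is a first-order elliptic system on the compact manifold $\ES^1$ with a bounded (non-self-adjoint) zeroth-order perturbation, and that $\Sigma(P)$ is exactly where the matrix symbol $p(x,\xi)-z\id$ fails to be invertible on $T^\ast\ES^1$; explicitly, $\det(p(x,\xi)-z\id)=z^2+V(x)^2-\xi^2$.

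For the discreteness claim, I would decompose $P(h)=A(h)+iB$, where $A(h)=\operatorname{diag}(-hD_x,hD_x)$ is self-adjoint on the domain $H^1_{per}([0,2\pi),\C^2)$ and $B=\left(\begin{smallmatrix}0 & V\\ V & 0\end{smallmatrix}\right)$ is bounded and self-adjoint with $\|B\|\le V_0$. For $z=s+it$ with $|t|>V_0$, the identity $\im\langle(P(h)-z)u,u\rangle=\langle(B-t)u,u\rangle$ gives $\|(P(h)-z)u\|\ge(|t|-V_0)\|u\|$, so $P(h)-z\id$ is injective with closed range; the mirror estimate for $P(h)^\ast-\bar z=A(h)-iB-\bar z$ shows the range is dense, hence the resolvent exists. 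Since the resolvent sends $L^2([0,2\pi),\C^2)$ into the domain $H^1_{per}([0,2\pi),\C^2)$, which embeds compactly into $L^2$ by Rellich's lemma, the resolvent is a compact operator and the spectrum is therefore discrete.

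For the holomorphy claim, fix a compact set $K\subset\Omega$. The hypothesis $\overline{\Omega}\cap\Sigma(P)=\emptyset$ yields a uniform lower bound $|z^2+V(x)^2-\xi^2|\ge c(K)(1+\xi^2)$ for $(x,\xi)\in T^\ast\ES^1$ and $z\in K$, so the matrix-valued symbol $q(x,\xi;z)=(p(x,\xi)-z\id)^{-1}$ belongs to the semiclassical symbol class $S^{-1}$ uniformly in $z\in K$, with holomorphic dependence on $z$. The Weyl quantization $Q(h,z)=\operatorname{Op}^{w}_h(q)$ is then a parametrix: by the standard symbol calculus, $Q(h,z)(P(h)-z\id)=\id+hR(h,z)$ with $R(h,z)$ bounded on $L^2$ uniformly in $z\in K$ and in small $h$. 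For $h$ small enough, a Neumann series inverts $\id+hR(h,z)$ and produces $(P(h)-z\id)^{-1}=(\id+hR(h,z))^{-1}Q(h,z)$, which inherits the holomorphic dependence on $z$ from $q$. Exhausting $\Omega$ by an increasing family of compacta then yields the statement on all of $\Omega$.

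The step I expect to require the most care is the uniformity in $z\in K$ of the symbol estimates on $q$ (and hence on the remainder $R(h,z)$), which must be tracked through the composition rule to guarantee that the operator norm of $R(h,z)$ is bounded uniformly in $z$ and $h$. These bounds follow from the explicit rational form of $q$ in $\xi$ together with the uniform lower bound on the determinant, but the bookkeeping is the delicate part; once they are in hand, the Neumann inversion and the transfer of holomorphy to the resolvent are routine.
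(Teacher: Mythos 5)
Your proof is correct in substance but takes a genuinely different, more self-contained route than the paper's, and it is worth contrasting them. For the first part the paper simply observes that $(P(h)-z\id)^{-1}\in\Psi^{-1}$ by the semiclassical calculus (for some $z$ in the resolvent set) and invokes Rellich--Kondrachov, whereas you exhibit a resolvent point by hand via the decomposition $P(h)=A(h)+iB$ and the coercivity estimate $\|(P(h)-z\id)u\|\ge(|\im z|-V_0)\|u\|$, then apply Rellich; both are fine. For the second part the paper adapts Dencker's abstract framework for non-selfadjoint semiclassical systems: it introduces the eigenvalues at infinity $\Sigma_\infty(P)$, checks $\Sigma(P)=\Sigma_\infty(P)$, and reduces to the bounded-symbol operator $Q(h)=(P(h)-z_1\id)^{-1}(P(h)-z_2\id)$ via the M\"obius substitution $\zeta=(z-z_2)/(z-z_1)$, citing Propositions 2.19--2.20 of Dencker. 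Your version instead builds an explicit Weyl-quantized parametrix from the symbol inverse $q=(p-z\id)^{-1}$ using the determinantal lower bound; this is more elementary and transparent, at the cost of having to track the symbol seminorms yourself rather than leaning on cited results.

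Two small points deserve attention. First, the one-sided relation $Q(h,z)(P(h)-z\id)=\id+hR(h,z)$ only gives injectivity of $P(h)-z\id$ after inverting $\id+hR$; to legitimately write $(P(h)-z\id)^{-1}=(\id+hR)^{-1}Q$ you should also note the right parametrix $(P(h)-z\id)\widetilde Q(h,z)=\id+h\widetilde R(h,z)$ supplied by the same calculus (or appeal to the Fredholm alternative via the first part of the proposition, since the index is then zero). Second, your bound $|z^2+V(x)^2-\xi^2|\ge c(K)(1+\xi^2)$ is established only uniformly on compacta $K\subset\Omega$, and $c(K)$ genuinely degrades as $K$ grows along an unbounded $\Omega$: taking $z=a+i\delta$ with $\delta>0$ fixed and $a\to\infty$, and $\xi^2\approx a^2-\delta^2+V(x)^2$, one has $|z^2+V^2-\xi^2|\approx 2a\delta$ while $1+\xi^2\approx a^2$, so the ratio tends to zero. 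Consequently your exhaustion by compacta produces a threshold $h_K$ that shrinks as $K\uparrow\Omega$, not a single $h_0$ valid on all of $\Omega$ as the statement asserts. The paper's M\"obius reduction to $Q(h)$ (with $z_1$ chosen off $\overline\Omega$) is exactly what compactifies the spectral parameter and removes this dependence. Since the proposition is only ever applied on small discs $B_\ve(\lambda_0)$, the gap is harmless for the paper's purposes, but for the statement as written you would need to handle the large-$|z|$ regime separately (e.g.\ by your own coercivity estimate from the first part when $|\im z|>V_0$, combined with the parametrix on a bounded remainder of $\Omega$).
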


\begin{proof}
By \cite[Theorem 6.29]{kato1966perturbation} the first statement follows if we show that $(P(h)-z\id)^{-1}$ is a compact operator for some $z$ in the resolvent set. For such $z$ we have $(P(h)-z\id)^{-1}\in\Psi^{-1}$ by the calculus, where $\Psi^k$ denotes the semiclassical pseudodifferential operators of order $k$. In view of the theorem of Rellich and Kondrachov (see e.g., \cite[Section 6.3]{Adams}) this implies that $(P(h)-z\id)^{-1}$ is compact on $L^2([0,2\pi),\C^2)$.

To prove the second statement we shall adapt the arguments of Dencker \cite{pseudosys} to our situation.
Note that the symbol $P(x,\xi)$ is also the principal symbol of $P(h)$, and that when we regard $P(x,\xi)$ as a function on $T^\ast\R$ the interior of $\Sigma(P)$ consists of those $\lambda$ for which $\det(P(x,\xi)-\lambda\id)=0$ for some $(x,\xi)\in T^\ast\R$. Similarly, we introduce the eigenvalues at infinity,
\begin{multline*}
\Sigma_\infty(P)=\{\lambda\in \C:\exists (x_j,\xi_j)\to\infty,\ \exists u_j\in\C^2\setminus 0 \text{ such that }\\ |P(x_j,\xi_j)u_j-\lambda u_j|/|u_j|\to 0\text{ as } j\to\infty\},
\end{multline*}
which is easily seen to be closed in $\C$ by taking a suitable diagonal sequence.
We now claim that $\Sigma(P)=\Sigma_\infty(P)$ in our case. Indeed,
$\Sigma(P)\subset\Sigma_\infty(P)$ by definition.
Conversely, let $\lambda\in\Sigma_\infty(P)$, then $|P(x_j,\xi_j)u_j-\lambda u_j|/|u_j|\to0$ as $j\to\infty$ for some $(x_j,\xi_j)\in T^\ast\R$ and $0\ne u_j\in\C^2$. We cannot have  $|\xi_j|\to\infty$ since this would imply that $|P(x_j,\xi_j)u_j-\lambda u_j|/|u_j|>1$ if $j$ is sufficiently large. Since $\{u_j/|u_j|\}_j$ and $\{V(x_j)\}_j$ are also bounded, we find by restricting to a subsequence and passing to the limit as $j\to\infty$ that
\begin{equation*}
\bigg(\begin{array}{cc} -\xi-\lambda& ic\\ ic& \xi-\lambda \end{array}\bigg)u=0,
\end{equation*}
where $|u|=1$ and $c$ belongs to the set of values of $V$. But then $\lambda\in\Sigma(P)$ which proves the claim.

Next, using the weight $m(x,\xi)=(1+|\xi|^2)^{1/2}$ we introduce the symbol classes $S(m^k)$, $k\in \Z$, of matrix valued $p\in C^\infty(T^\ast\R,\mathcal L(\C^2,\C^2))$ such that
\begin{equation*}
\|\partial_x^\alpha\partial_\xi^\beta p(x,\xi)\|\le C_{\alpha\beta}m(x,\xi)^k\quad\forall\, \alpha,\beta
\end{equation*}
where $\|A\|$ denotes the norm of the matrix $A$. Then $P\in S(m)$.
Using the Frobenius norm which satisfies $\|\phantom{i}\|\le\|\phantom{i}\|_F$, one easily checks that $\|P(x,\xi)^{-1}\|\le C(1+|\xi|)^{-1}$ when $|\xi|\gg1$.
If $z_1\notin\Sigma(P)$ we thus find by \cite[Proposition 2.20]{pseudosys} that $P(h)-z_1\id$ is invertible\footnote{Actually, \cite[Proposition 2.20]{pseudosys} concerns systems on $\R^n$ but the claim follows by inspection of the proof together with the fact that symbols in $S(1)$ yield bounded operators on $L^2([0,2\pi),\C^n)$ by (an extension of) \cite[Theorem 5.5]{Zworski}.} for sufficiently small $h$, so we can define 
\begin{equation*}
Q(h)=(P(h)-z_1\id)^{-1}(P(h)-z_2\id),\quad z_1\neq z_2,
\end{equation*}
and by the calculus, the symbol of $Q(h)$ is in $S(1)$, i.e., the symbol and all its derivatives are bounded. We take $z_1\notin\Omega$ which is possible by assumption. Now,
\begin{equation*}
\Sigma(Q)=\{\zeta\in\C:(\zeta z_1-z_2)/(\zeta-1)\in\Sigma(P)\},
\end{equation*}
so $\zeta\in\Sigma(Q)$ if and only if $\zeta=(z-z_2)/(z-z_1)$ for some $z\in\Sigma(P)$. Note that $\zeta\ne1$ since $z_1\ne z_2$. 
If $\Omega_1=\{(z-z_2)/(z-z_1):z\in\Omega\}$, then $\Omega_1$ is an open connected set such that 
$\overline{\Omega}_1\bigcap\Sigma(Q)=\emptyset$.
By \cite[Proposition 2.19]{pseudosys} it follows that $(Q(h)-\zeta\id)^{-1}$ is holomorphic in $\zeta\in\Omega_1$ for $h$ sufficiently small.
Since the resolvents of $P(h)$ and $Q(h)$ are related by
\begin{equation*}
(Q(h)-\zeta\id)^{-1}=(1-\zeta)^{-1}(P(h)-z_1\id)\bigg(P(h)-\frac{\zeta z_1-z_2}{\zeta-1}\id\bigg)^{-1}
\end{equation*}
the result follows.
\end{proof}

\subsection{Exact WKB solutions}

Here we recall the construction of a solution of \eqref{system} in a complex domain as a convergent series.
Since $V$ is real analytic and periodic, it follows that there is a $\delta>0$ such that $V$ extends to a holomorphic function in the strip
\begin{equation}\label{strip}
D=\{ x\in\C : \lvert\im x\rvert<\delta\}.
\end{equation}
The exact WKB solutions of systems of type \eqref{system} are known to be of the form
\begin{equation}\label{exactWKB}
u^\pm(x,h)=e^{\pm z(x)/h}\bigg( \begin{array}{cc} 1 & 1\\ -1 & 1\end{array} \bigg)Q(z(x))\bigg( \begin{array}{cc} 0 & 1\\ 1 & 0\end{array} \bigg)^{(1\pm1)/2}w^\pm(x,h),
\end{equation}
see \cite{fujiie2009semiclassical}. The function $z(x)$ is the complex change of coordinates
\begin{equation}\label{z}
z(x)=z(x;x_0)=i\int_{x_0}^x\sqrt{V(t)^2+\lambda^2}\de t,
\end{equation}
where $x_0$ is a base point in $D$. Here, $z(x)$ is defined on the Riemann surface of $(V^2+\lambda^2)^{1/2}$ over $D$, and $Q$ is the matrix valued function
\begin{equation}\label{defQ}
Q(z)=\bigg( \begin{array}{cc} H(z)^{-1} & H(z)^{-1}\\ iH(z) & -iH(z)\end{array} \bigg)
\quad \text{with }
H(z(x))=\bigg(\frac{iV(x)+\lambda}{iV(x)-\lambda}\bigg)^{1/4}
\end{equation}
defined on the Riemann surface of 
$H(z({\hdot}))$ over $D$.  
These Riemann surfaces are defined by introducing branch cuts emanating from the zeros of $x\mapsto\det(M(x,\lambda))$, i.e., of $iV\pm\lambda$ (the turning points of the system \eqref{system}), see Section \ref{section:proofB}.

The amplitude vectors $w^\pm$ in \eqref{exactWKB} are defined as the (formal) series
\begin{equation}\label{amplitude}
w^\pm(x,h)=\bigg( \begin{array}{c} w^\pm_{\mathrm{even}}(x,h)\\ w^\pm_{\mathrm{odd}}(x,h)\end{array} \bigg)
=\sum_{n=0}^\infty 
\bigg( \begin{array}{c} w^\pm_{2n}(z(x))\\ w^\pm_{2n+1}(z(x))\end{array} \bigg),
\end{equation}
where $w^\pm_0(z)\equiv 1$, while $w^\pm_j(z)$ for $j\ge1$ are the unique solutions to the scalar transport equations
\begin{equation}\label{transportodd}
\bigg(\frac{d}{dz}\pm\frac{2}{h}\bigg)w^\pm_{2n+1}(z)=\frac{dH(z)/dz}{H(z)}w^\pm_{2n}(z),
\end{equation}
\begin{equation}\label{transporteven}
\frac{d}{dz}w^\pm_{2n+2}(z)=\frac{dH(z)/dz}{H(z)}w^\pm_{2n+1}(z)
\end{equation}
with prescribed initial conditions $w_n^\pm(\tilde z)=0$ for some choice of base point $\tilde z=z(\tilde x)$ where $\tilde x$ is not a turning point. Here $d/dz$ is defined through the chain rule, e.g.,
\begin{equation}\label{d/dz}
\frac{d}{dx}H(z(x))=\frac{dH(z)}{dz}z'(x).
\end{equation}
Note that these equations are the same as those obtained by an exact WKB construction for scalar Schr{\"o}dinger equations \cite{gerard1988precise,ramond1996semiclassical}.
When we want to signify the dependence on the base point $\tilde z=z(\tilde x)$ we write
\begin{equation*}
w^\pm(x,h;\tilde x)
=\bigg( \begin{array}{c} w^\pm_{\mathrm{even}}(x,h;\tilde x)\\ w^\pm_{\mathrm{odd}}(x,h;\tilde x)\end{array} \bigg)
\end{equation*}
for the amplitude vectors.

Let $\Omega$ be a simply connected open subset of $D$, free from turning points. Then $z=z(x)$ is conformal from $\Omega$ onto $z(\Omega)$. For fixed $h>0$, the formal series \eqref{amplitude} converges uniformly in a neighborhood of the amplitude base point $\tilde x$, and $w^\pm_{\mathrm{even}}(x,h)$ and $w^\pm_{\mathrm{odd}}(x,h)$ are analytic functions in $\Omega$, see \cite[Lemma 3.2]{fujiie2009semiclassical}. It follows that the functions $u^\pm$ given by \eqref{exactWKB} are exact solutions of \eqref{system}.
They shall henceforth be written as
\begin{equation}\label{u}
u^\pm(x;x_0,\tilde x)
\end{equation}
to indicate the particular choice of amplitude base point $\tilde x\in \Omega$, and phase base point $x_0\in D$ as it appears in \eqref{z}.
Note that these solutions are defined for example everywhere on $\R$, although some of the expressions involved are only defined on Riemann surfaces of $(V^2+\lambda^2)^{1/2}$ or $H(z({\hdot}))$.

For fixed $\tilde x\in\Omega$, let $\Omega_\pm$ be the set of points $x$ for which there is a path $\Gamma(\tilde x,x)$ from $\tilde x$ to $x$ along which $t\mapsto \pm\re z(t)$ is strictly increasing. In other words, $x\in\Omega_\pm$ if there is a path $\Gamma(\tilde x,x)$ which intersects the {\it Stokes lines} (i.e., the level curves of $t\mapsto \re z(t)$) transversally in the appropriate direction. The calculation of the quantization condition will rely on the following asymptotic properties.

\begin{remark}\label{h-asymptotic}
For any $k,N\in\N$
\begin{equation*}
\partial^k\big(w^\pm_\mathrm{even}(x,h)-\sum_0^Nw_{2n}^\pm(z(x))\big)=O(h^{N+1}),
\end{equation*}
\begin{equation*}
\partial^k\big(w^\pm_\mathrm{odd}(x,h)-\sum_0^Nw_{2n+1}^\pm(z(x))\big)=O(h^{N+2}),
\end{equation*}
uniformly on compact subsets of $\Omega_\pm$ as $h\to0$, see \cite[Proposition 3.3]{fujiie2009semiclassical}.
In particular,
\begin{equation*}
w^\pm_\mathrm{even}(x,h)=1+O(h),\quad w^\pm_\mathrm{odd}(x,h)=O(h),
\end{equation*}
as $h\to0$.
\end{remark}

\subsection{The Wronskian formula}

For vector valued solutions $u$ and $v$ of \eqref{system}, we introduce the {\it Wronskian}
\begin{equation*}
\mathcal W(u,v)(x)=\det(u(x)\ v(x)).
\end{equation*}
Since the trace of the matrix $M(x,\lambda)$ is zero, $\mathcal W(u,v)$ is actually independent of $x$. 
For a phase base point $x_0\in D$ and different amplitude base points $\tilde x,\tilde y\in\Omega$, elementary computations show that
\begin{multline*}
\mathcal W(u^+(x;x_0,\tilde x),u^-(x;x_0,\tilde y))
\\=-4i\big(
w_\mathrm{odd}^+(x,h;\tilde x)w_\mathrm{odd}^-(x,h;\tilde y)-
w_\mathrm{even}^+(x,h;\tilde x)w_\mathrm{even}^-(x,h;\tilde y)\big),
\end{multline*}
where $u^\pm$ are given by \eqref{exactWKB} via \eqref{u}, and we used the fact that $\det(Q)=-2i$. Since the Wronskian is independent of $x$, we can choose $x=\tilde y$, which in view of the initial conditions of the transport equations \eqref{transportodd}--\eqref{transporteven} means that the expression above reduces to 
\begin{equation}\label{Wronskian1+}
\mathcal W(u^+(x;x_0,\tilde x),u^-(x;x_0,\tilde y))
=4iw_\mathrm{even}^+(\tilde y,h;\tilde x).
\end{equation}
We may of course also choose $x=\tilde x$, thus obtaining
\begin{equation}\label{Wronskian1-}
\mathcal W(u^+(x;x_0,\tilde x),u^-(x;x_0,\tilde y))
=4iw_\mathrm{even}^-(\tilde x,h;\tilde y).
\end{equation}
In particular, we see that if there is a path $\Gamma(\tilde x,\tilde y)$ from $\tilde x$ to $\tilde y$ along which the function $t\mapsto \re z(t)$ is strictly increasing, then $\mathcal W(u^+(x;x_0,\tilde x),u^-(x;x_0,\tilde y))
=4i+O(h)$ as $h\to0$ by Remark \ref{h-asymptotic}. 
This shows that such a pair of solutions is linearly independent if $h$ is sufficiently small.

We shall also need the following formula, obtained by elementary computations, for pairs of solutions of the same type: 
\begin{multline}\label{Wronskian2}
\mathcal W(u^\pm(x;x_0,\tilde x),u^\pm(x;x_0,\tilde y))
=-4i(-1)^{(1\pm 1)/2}e^{\pm2 z(x;x_0)/h}\\ \times\big(
w_\mathrm{even}^\pm(x,h;\tilde x) w_\mathrm{odd}^\pm(x,h;\tilde y)-
w_\mathrm{even}^\pm(x,h;\tilde y)w_\mathrm{odd}^\pm(x,h;\tilde x)\big).
\end{multline}
In particular, if we can choose $x=x_\pm$ so that $\pm\re z(x_\pm;x_0)<0$ it follows that 
$\mathcal W(u^\pm(x;x_0,\tilde x),u^\pm(x;x_0,\tilde y))
=O(e^{-c/h})$ as $h\to0$ for some $c>0$.

\subsection{Conjugation}

Let $\bar x$ denote the scalar complex conjugate of $x\in\C$. 
For a matrix $A\in \mathcal L(\C^n,\C^m)$, let $\bar A$ denote the matrix with complex conjugated entries, so that $A^\ast={}^{t\mkern-4.5mu}\bar A$ is the conjugate transpose (adjoint).
For clarity we shall in this subsection write $u(x,\lambda)$ for a solution to \eqref{system}.
By using \eqref{exactWKB} and \eqref{transportodd}--\eqref{transporteven} it is straightforward to check (see the proof of Lemma \ref{rewritinglemma} below) that the WKB solutions enjoy the symmetry property
\begin{equation}\label{eq:WKBconjugation}
u^\pm(x;x_0,y_0,\lambda)=i\bigg( \begin{array}{cc} 0 & 1\\ -1 & 0\end{array} \bigg) \overline{u^\mp(\bar x;\bar x_0,\bar y_0,\bar\lambda)}.
\end{equation}
This implies that it suffices to study the spectral problem $P(h)u=\lambda u$ for spectral parameter $\lambda$ with nonnegative imaginary part.

\begin{lemma}\label{symmetrylemma}
The set of eigenvalues of $P(h)$ is symmetric with respect to the real axis.
\end{lemma}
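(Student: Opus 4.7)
The plan is to construct, from an eigenfunction $u$ of $P(h)$ with eigenvalue $\lambda$, an explicit eigenfunction $v$ of $P(h)$ with eigenvalue $\bar\lambda$, by exploiting a discrete algebraic symmetry of the coefficient matrix in \eqref{system}. This will immediately yield symmetry of $\spec P(h)$ about the real axis.

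First, I would introduce the constant matrix $J:=\bigl(\begin{smallmatrix} 0 & 1 \\ -1 & 0 \end{smallmatrix}\bigr)$, noting that $J^{-1}=-J$. A short direct computation, which uses only that $V$ is real-valued on $\R$, yields the key identity
\[
J\,\overline{M(x,\lambda)}\,J^{-1}=-M(x,\bar\lambda), \qquad x\in\R.
\]
This is essentially the infinitesimal content of the WKB symmetry \eqref{eq:WKBconjugation} recorded above, and is the algebraic heart of the lemma.

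Second, given $u\in L^2([0,2\pi),\C^2)$ with $P(h)u=\lambda u$, so that $u$ solves \eqref{system} and is $2\pi$-periodic, I would take the complex conjugate of \eqref{system} for real $x$ to obtain $\tfrac{h}{i}\bar u'=-\overline{M(x,\lambda)}\,\bar u$, multiply on the left by $J$, and insert $J^{-1}J$ to arrive at
\[
\tfrac{h}{i}(J\bar u)'=-J\,\overline{M(x,\lambda)}\,J^{-1}(J\bar u)=M(x,\bar\lambda)(J\bar u).
\]
Thus $v:=J\bar u$ satisfies $P(h)v=\bar\lambda v$. Since $J$ is constant and $u$ is $2\pi$-periodic, $v$ is also $2\pi$-periodic and therefore belongs to $L^2([0,2\pi),\C^2)$; moreover $v\not\equiv 0$ because $J$ is invertible. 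This exhibits $\bar\lambda$ as an eigenvalue of $P(h)$.

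There is no serious obstacle in executing this plan; the only point requiring care is a careful bookkeeping of signs, namely ensuring that the minus sign on the right of $J\,\overline{M(x,\lambda)}\,J^{-1}=-M(x,\bar\lambda)$ cancels the minus sign picked up from complex conjugating the derivative in \eqref{system}, so that $v$ solves the system with spectral parameter $\bar\lambda$ (and not $-\bar\lambda$). With this verified, the lemma follows immediately.
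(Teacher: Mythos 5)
Your proof is correct, and it is cleaner and more elementary than the one in the paper. You verify the algebraic identity $J\,\overline{M(x,\lambda)}\,J^{-1}=-M(x,\bar\lambda)$ (with $J=\bigl(\begin{smallmatrix}0&1\\-1&0\end{smallmatrix}\bigr)$) directly at the level of the coefficient matrix of the first-order system, then conjugate the ODE itself and conclude that $v=J\bar u$ is a $2\pi$-periodic solution for the parameter $\bar\lambda$. The paper instead routes the argument through the WKB machinery: it conjugates with the matrix $\bigl(\begin{smallmatrix}0&-i\\i&0\end{smallmatrix}\bigr)$ (which is just $-iJ$, so the same up to an irrelevant scalar), expands the eigenfunction $u$ in a basis of exact WKB solutions $u^\pm$, and invokes the conjugation identity \eqref{eq:WKBconjugation} for those solutions to see that the conjugated object solves the $\bar\lambda$-system. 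Your observation that the matrix identity is ``the infinitesimal content'' of \eqref{eq:WKBconjugation} is exactly right; it is in fact the reason \eqref{eq:WKBconjugation} holds. For proving this particular lemma, your direct argument is preferable, as it avoids unnecessary reliance on the WKB construction and works for any $L^2$ eigenfunction without having to express it in the WKB basis; the paper's route has the minor virtue of reusing the identity \eqref{eq:WKBconjugation}, which it has already recorded for other purposes.
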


\begin{proof}
Let $\lambda$ be an eigenvalue with $2\pi$-periodic eigenvector $u=u(x,\lambda)$. Introduce a pair of linearly independent WKB solutions $u^\pm(x,\lambda)$. Since the solution space of \eqref{system} is $2$-dimensional we have $u(x,\lambda)=c_1u^+(x,\lambda)+c_2u^-(x,\lambda)$, $c_j\in\C$.
Set 
\begin{equation*}
v(x)=\bigg(\begin{array}{cc} 0& -i\\i&0\end{array}\bigg)\overline{u(x,\lambda)}.
\end{equation*}
Then $v$ is $2\pi$-periodic. Moreover, \eqref{eq:WKBconjugation} 
implies that $v(x)=\bar c_1 u^-(x,\bar\lambda)+\bar c_2u^+(x,\bar\lambda)$ for $x\in\R$. Hence, $P(h)v=\bar\lambda v$, which completes the proof.
\end{proof}

\subsection{Periodic solutions}\label{sub:persol}

As a final preparation, we record a tractable condition for the existence of a nontrivial periodic solution of \eqref{system}, i.e., an  eigenvector of $P(h)$ corresponding to $\lambda$.

\begin{proposition}\label{prop:persol}
Let $u$ and $v$ be a pair of linearly independent solutions of \eqref{system},
and set $\tilde u(x)=u(x-2\pi)$ and $\tilde v(x)=v(x-2\pi)$.
Let $T$ be the transition matrix given by
\begin{equation}\label{generalT}
(u\ v)=(\tilde u\ \tilde v)T,
\end{equation}
where $(u\ v)$ is the $2\times2$ system with columns $u$ and $v$. Then $\det(T)=1$, and the existence of a nontrivial periodic solution of \eqref{system} is equivalent to the condition 
\begin{equation}\label{eq:transitioncondition}
\tr (T)=2.
\end{equation}
\end{proposition}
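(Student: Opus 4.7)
The plan is to first show that $T$ is necessarily $x$-independent, then deduce $\det T = 1$ from a short Wronskian computation, and finally characterize the existence of a periodic solution as an eigenvalue condition for $T$.

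The key observation underpinning all three steps is that since $V$ is $2\pi$-periodic, so is the coefficient matrix $M(x,\lambda)$ appearing in \eqref{system}. Hence the fundamental matrix $U(x) = (u(x)\ v(x))$ and its shift $\tilde U(x) = U(x-2\pi)$ satisfy the same linear system $(h/i)U' = M(x,\lambda)U$. Since $\tilde U$ is invertible (by linear independence of $u$ and $v$), the defining relation $U = \tilde U T$ forces $T = \tilde U^{-1} U$ to be constant in $x$, which legitimizes \eqref{generalT}.

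For $\det T = 1$, I would invoke the fact noted in \S2.2 that the Wronskian of any two solutions of \eqref{system} is $x$-independent because $\tr M = 0$. Thus $\det U$ is a nonzero constant, and comparing $\det U(x)$ with $\det \tilde U(x) \cdot \det T = \det U(x - 2\pi) \cdot \det T = \det U(x)\cdot\det T$ yields $\det T = 1$ at once.

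For the equivalence between periodicity and $\tr T = 2$, I would expand an arbitrary solution uniquely as $w = \alpha u + \beta v$ with $(\alpha,\beta)\in\C^2$, and substitute $x \mapsto x+2\pi$ in \eqref{generalT} to obtain $(u(x+2\pi)\ v(x+2\pi)) = U(x) T$. The periodicity condition $w(x+2\pi) = w(x)$ then translates into the linear equation $T\binom{\alpha}{\beta} = \binom{\alpha}{\beta}$, which admits a nontrivial solution if and only if $1$ is an eigenvalue of $T$, i.e.\ $\det(T - I) = 0$. Expanding the characteristic polynomial of the $2 \times 2$ matrix $T$ and using $\det T = 1$ reduces this condition to $\tr T = 2$.

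I do not anticipate a substantive obstacle: the statement is essentially Floquet's theorem packaged for the basis $(u,v)$, and the argument uses only the periodicity of $M$ and the vanishing of its trace. Neither the specific block structure of the Zakharov-Shabat operator nor any WKB input is needed here.
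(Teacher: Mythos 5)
Your proposal is correct and follows essentially the same route as the paper: constancy of the Wronskian gives $\det T=1$, and the periodicity of a combination $\alpha u+\beta v$ is translated into the eigenvalue equation $T\binom{\alpha}{\beta}=\binom{\alpha}{\beta}$, which by $\det T=1$ is equivalent to $\tr T=2$. Your preliminary remark that $T=\tilde U^{-1}U$ is $x$-independent (because both fundamental matrices satisfy the same periodic system) is a small explicit justification the paper leaves implicit, but it does not change the substance of the argument.
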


\begin{proof}
Taking the determinant of both sides of \eqref{generalT} we get
\begin{equation*}
\mathcal W(u,v)=\mathcal W(u({\hdot}-2\pi),v({\hdot}-2\pi))\det(T),
\end{equation*}
and since the Wronskian is independent of $x$ it follows that $\det(T)=1$. Hence, \eqref{eq:transitioncondition} is equivalent to $\tr(T)=\det(T)+1$, i.e.,
$\det(T-\id)=0$, which holds if and only if $Tc=c$ for some vector $c\ne0$. If $Tc=c$ then a simple computation shows that $x\mapsto (u(x)\ v(x))c$ is nontrivial and $2\pi$-periodic. Conversely, if $U$ is a nontrivial $2\pi$-periodic solution then $U$ can be expressed as a linear combination $U=c_1 u+c_2 v$, and the same computation as before shows that $c={}^{t\mkern-1mu}(c_1,c_2)$ satisfies $Tc=c$.
\end{proof}

\section{Eigenvalues in the absence of real turning points}\label{section:proofA}

Here we prove Theorem \ref{thm:A} by computing the trace of the transition matrix $T$ introduced above, then applying  \eqref{eq:transitioncondition} and analyzing the result.
We fix $\lambda_0$ satisfying the hypotheses of Theorem \ref{thm:A}, then $V(x)^2+\lambda_0^2>0$ for all $x\in\R$ and there are no turning points on the real axis.
Choose a determination of 
\begin{equation*}
z(x;x_0)=z(x;x_0,\lambda_0)=i\int_{x_0}^x\sqrt{V(t)^2+\lambda_0^2}\de t
\end{equation*}
by picking the branch of the square root satisfying $(V(x)^2+\lambda_0^2)^{1/2}>0$ at $x=0$. 
Clearly, $\re z(x)$ is independent of $x\in\R$, so the real axis is a Stokes line.
Since Stokes lines cannot intersect 
(see e.g.~\cite{evgrafov1966asymptotic}) we find by restricting to a sufficiently small tubular neighborhood of $\R$ (which in particular should contain no turning points) that the Stokes lines are essentially parallel to the real axis there.
Recall that away from turning points, the configuration of Stokes lines depends continuously on the parameter $\lambda$. Hence, we can find $\ve>0$ such that if $|\lambda-\lambda_0|<\ve$ then there is a turning-point-free neighborhood of the real axis in which the imaginary axis is still transversal to the tangent vectors of any Stokes line.
From now on, we fix $\lambda\in B_\ve(\lambda_0)$. We also choose $\ve$ so small that $(V(x)^2+\lambda^2)^{1/2}$ has positive real part at $x=0$. This gives a determination of $z(x)$ for this choice of $\lambda$ which is consistent with the determination of $z(x;x_0,\lambda_0)$ above.

Recall that the number of Stokes lines starting from a turning point $x_0$ of order $n$ is $n+2$. If $W(x)=-V(x)^2$, the Stokes lines starting from $x_0$ have argument
\begin{equation}\label{eq:simpledoubleargs}
\begin{aligned}
& \pi/3-\tfrac{1}{3}\Arg{W'(x_0)}\text{ mod $2\pi/3$ if $x_0$ is simple and}\\
& \pi/4-\tfrac{1}{4}\Arg{W''(x_0)}\text{ mod $\pi/2$ if $x_0$ is double,}
\end{aligned}
\end{equation}
see the study by G{\'e}rard and Grigis \cite[p.~152]{gerard1988precise}.

\begin{example}\label{ex:1}
Let $V(x)=\cos x$. For $\lambda>0$, the turning points are simple, and can be found by solving $\zeta+\zeta^{-1}=2 i (-1)^j \lambda$ for $j=1,2$, where $\zeta=e^{ix}$.
Upon taking logarithms we get
\begin{equation*}
x_\pm^{(j)}=\pm \Big(\pi/2- i(-1)^j\log\Big(1+\sqrt{1+\lambda^2}\Big)\Big)+2\pi \Z.
\end{equation*}
Here, $x_+^{(1)}$ and $x_-^{(2)}$ lie in the upper half plane, and $x_-^{(1)}$ and $x_+^{(2)}$ in the lower, and it is easy to check that
\begin{equation*}
\Arg W'(x_\pm^{(j)})=\pm(-1)^j\pi/2.
\end{equation*}
Hence, Stokes lines starting from turning points in the upper half plane have arguments $\pi/2$ mod $2\pi/3$. Stokes lines starting from turning points in the lower half plane have arguments $-\pi/2$ mod $2\pi/3$. Figure \ref{figure1} shows the configuration of Stokes lines for $\lambda=1$ and $\lambda=1+10^{-1}i$.
\end{example}

\begin{figure}
	\centering
\begin{tikzpicture}[scale=.75]
\begin{axis}[
    y={0.16\linewidth},
    x={0.06\linewidth},
    minor xtick={-3.1416, -2.7489, -2.3562, -1.9635, -1.1781, -0.78539, -0.3927, 0, 0.3927, 0.78539, 1.1781, 1.9635, 2.3562, 2.7489, 3.1416},
    minor ytick={-0.8,-0.7,-0.6,-0.4,-0.3,-0.2,-0.1,0.1,0.2,0.3,0.4,0.6,0.7,0.8},
    xtick      ={-3.1416, -1.5708, 0, 1.5708, 3.1416},
        xticklabels={$-\pi$, $-\pi/2$, $0$, $\pi/2$, $\pi$},    
        ytick={-0.5,0,0.5},
xmin=-3.5,
xmax=3.5,
ymin=-0.925,
ymax=0.95,
xlabel={$\re x$},
ylabel={$\im x$},
enlarge x limits=0.005,
enlarge y limits=0.005,
]
\addplot graphics[xmin=-3.27,ymin=-0.821,xmax=3.27,ymax=0.836]{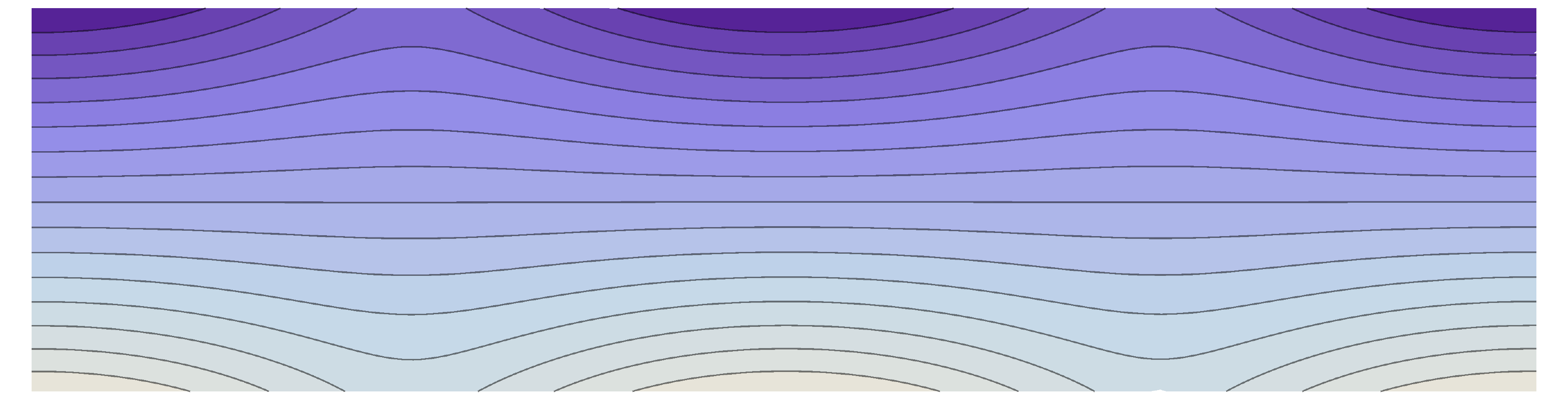};
\end{axis}
\end{tikzpicture}
\quad
\begin{tikzpicture}[scale=.75]
\begin{axis}[
    y={0.16\linewidth},
    x={0.06\linewidth},
    minor xtick={-3.1416, -2.7489, -2.3562, -1.9635, -1.1781, -0.78539, -0.3927, 0, 0.3927, 0.78539, 1.1781, 1.9635, 2.3562, 2.7489, 3.1416},
    minor ytick={-0.8,-0.7,-0.6,-0.4,-0.3,-0.2,-0.1,0.1,0.2,0.3,0.4,0.6,0.7,0.8},
    xtick      ={-3.1416, -1.5708, 0, 1.5708, 3.1416},
        xticklabels={$-\pi$, $-\pi/2$, $0$, $\pi/2$, $\pi$},    
        ytick={-0.5,0,0.5},
xmin=-3.5,
xmax=3.5,
ymin=-0.925,
ymax=0.95,
xlabel={$\re x$},
ylabel={$\im x$},
enlarge x limits=0.005,
enlarge y limits=0.005,
]
\addplot graphics[xmin=-3.27,ymin=-0.821,xmax=3.27,ymax=0.836]{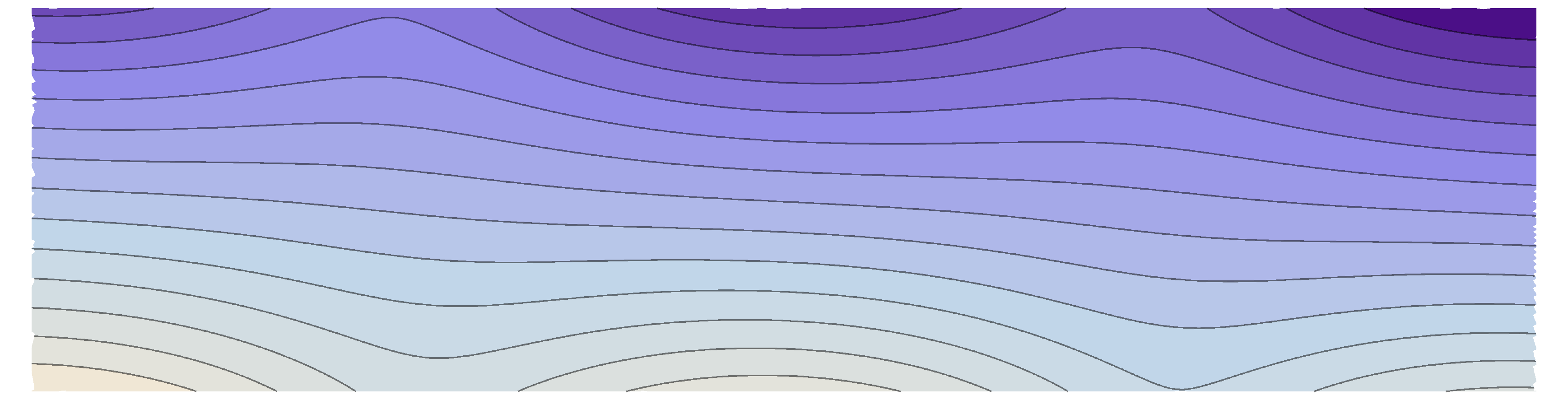};
\end{axis}
\end{tikzpicture}
\quad
\includegraphics[width=0.075\linewidth]{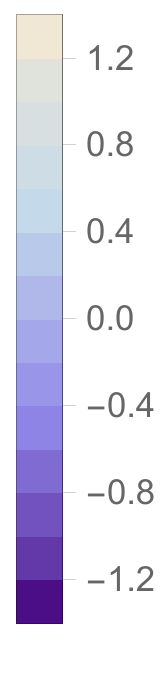}
\caption{\label{figure1} The configuration of Stokes lines for $V(x)=\cos x$, with legends describing the size of $\re z(x;0,\lambda)$ for $\lambda=1$ (left panel) and $\lambda=1+10^{-1} i$ (right panel).}
\end{figure}

Note that if we let $\lambda\to0$ along the real line in Example \ref{ex:1}, then the turning points collapse onto the real line to form double turning points, so that case is excluded from Theorem \ref{thm:A}. For example, $x_+^{(1)}$ and $x_+^{(2)}$ collapse to $\pi/2$. The Stokes lines starting from the resulting turning points have argument 0 mod $\pi/2$.

We now return to the general situation treated in Theorem \ref{thm:A}.
For $x$ near 0 we get by Taylor's formula 
\begin{equation}\label{Taylor's}
z(x;x_0)-z(0;x_0)=ix(V(0)^2+\lambda^2)^{1/2}(1+g(x)),
\end{equation}
where $g$ is analytic and $g(0)=0$. Moreover, $(V(0)^2+\lambda^2)^{1/2}$ is approximately real by assumption. Hence, for $x$ near 0 we have $\re z(x)\simeq \re z(0)-\im x$, showing that $\re z(x)$ is a strictly decreasing function of $\im x$ for $x$ near 0. This remains true in a small tubular neighborhood of the real axis, since there, any line parallel to the imaginary axis is transversal to the Stokes lines. The reader is asked to compare with Figure \ref{figure1}.

Let $y_0$ be an amplitude base point in the upper half plane near the real axis with $\re y_0=0$, and let $\bar y_0$ be the complex conjugate. We will choose phase base points on the real line. With the previous discussion in mind, and because we want our WKB solutions to have asymptotic formulas valid in intersecting domains, we introduce  the four WKB solutions
\begin{align*}
u_0^+(x)&=u^+(x;0,y_0), & u_1^+(x)&=u^+(x;2\pi,y_0+2\pi),\\
u_0^-(x)&=u^-(x;0,\bar y_0), & u_1^-(x)&=u^-(x;2\pi,\bar y_0+2\pi),
\end{align*}
defined in accordance with \eqref{u}.
Inspecting the definition (see \eqref{transportodd}--\eqref{transporteven}) we find that $u_1^\pm(x)=u_0^\pm(x-2\pi)$, so Proposition \ref{prop:persol} is applicable.

\begin{proposition}\label{prop:Tsimplecase}
Let $T$ be the transition matrix defined by $(u_0^+\ u_0^-)=(u_1^+\ u_1^-)T$ and let $I$ denote the action integral $I(\lambda)=\int_{0}^{2\pi}(V(t)^2+\lambda^2)^{1/2}\de t$. Then
\begin{equation*}
T=\bigg( \begin{array}{cc} e^{iI/h}& 0\\ 0& e^{-iI/h}\end{array} \bigg)\bigg( \begin{array}{cc} t_{11}& t_{12}\\ t_{21}& t_{22}\end{array} \bigg),
\end{equation*}
where $t_{11}t_{22}-t_{12}t_{21}=1$. Moreover, $t_{jj}=1+r_j(\lambda,h)$ where $r_j$ depends holomorphically on $\lambda$, and $r_j=O(h)$ while $t_{12},t_{21}=O(e^{-\delta/h})$ for some $\delta>0$ as $h\to0$.
\end{proposition}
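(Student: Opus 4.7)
The plan is to apply Proposition~\ref{prop:persol} and compute each entry of $T$ via Cramer's rule as a ratio of Wronskians of the exact WKB solutions, using the formulas \eqref{Wronskian1+}--\eqref{Wronskian2} together with Remark~\ref{h-asymptotic}. To bring all four solutions to a common phase base point, I would first observe that changing the phase base point in \eqref{exactWKB} only introduces the scalar factor $e^{\pm z(x_0';x_0)/h}$, since $Q(z(x))$ and $w^\pm(x,h;\tilde x)$ depend on $x$ and the amplitude base point alone. Combined with the identity $z(x;2\pi)=z(x;0)-iI$, this gives
\begin{equation*}
u_1^+(x)=e^{-iI/h}u^+(x;0,y_0+2\pi),\qquad u_1^-(x)=e^{iI/h}u^-(x;0,\bar y_0+2\pi).
\end{equation*}
Substituting into the defining relation for $T$ factors out the diagonal $\mathrm{diag}(e^{iI/h},e^{-iI/h})$, and the problem reduces to showing that the remaining ``bare'' ratios $t_{ij}$ of Wronskians, relating $u^\pm(\cdot;0,y_0)$ and $u^\pm(\cdot;0,\bar y_0)$ to $u^\pm(\cdot;0,y_0+2\pi)$ and $u^\pm(\cdot;0,\bar y_0+2\pi)$, have the claimed form.

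For the diagonal entries, the relevant numerators and the common denominator are all Wronskians of opposite-type solutions, so by \eqref{Wronskian1-} each equals $4i$ times a $w^-_{\mathrm{even}}$ evaluated at an upper-half-plane base point with lower-half-plane amplitude base point. The Stokes geometry discussion preceding the statement tells us that $\re z$ is strictly decreasing in $\im x$ throughout a tubular neighborhood of $\R$, so the two base points can be joined by a slightly tilted upward path on which $\re z$ is strictly decreasing; the evaluation point then lies in $\Omega_-$ relative to the amplitude base point, and Remark~\ref{h-asymptotic} gives $w^-_{\mathrm{even}}=1+O(h)$. This yields $t_{11},t_{22}=1+O(h)$. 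For the off-diagonal entries, the numerators are Wronskians of same-type solutions, to which \eqref{Wronskian2} applies; they carry the prefactor $e^{\pm 2z(x;0)/h}$. I would evaluate at a point $x$ slightly below (respectively above) $\R$, chosen so that $\pm\re z(x;0)<-c_0<0$ while both amplitude base points still lie in the corresponding $\Omega_\pm$. The monotonicity of $\re z$ in $\im x$ makes this possible by a short vertical excursion, and the bracketed amplitude combination in \eqref{Wronskian2} is bounded, so $t_{12},t_{21}=O(e^{-\delta/h})$ for some $\delta>0$.

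Holomorphy in $\lambda$ follows because each $w_n^\pm$ solves a transport equation whose coefficients depend holomorphically on $\lambda$, the Stokes configuration remaining turning-point-free on $B_\varepsilon(\lambda_0)$ by continuity; the Wronskians are therefore holomorphic in $\lambda$, the denominator is uniformly close to $4i$ and hence nonvanishing, so each $t_{jk}$ is holomorphic. Finally, Proposition~\ref{prop:persol} gives $\det T=1$, and since the extracted diagonal factor has determinant $1$, the relation $t_{11}t_{22}-t_{12}t_{21}=1$ follows. The main obstacle I expect is essentially bookkeeping: four different Wronskians, each requiring its own choice of evaluation point and its own path argument placing the amplitude base point in $\Omega_+$ or $\Omega_-$ relative to it. Once it is firmly in hand that the real line is essentially a Stokes line and that $\re z$ is transversally monotone in $\im x$, every required path is obtained by a small perturbation of a vertical segment.
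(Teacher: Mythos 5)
Your proposal is essentially the paper's own proof: you factor out the diagonal $\mathrm{diag}(e^{iI/h},e^{-iI/h})$ by shifting the phase base points to $0$, compute the remaining entries as ratios of Wronskians, apply the opposite-type Wronskian formula together with Remark~\ref{h-asymptotic} along paths on which $\re z$ is monotone to get $t_{jj}=1+O(h)$, and use the same-type formula \eqref{Wronskian2} evaluated just above (respectively below) the real axis to get the exponential smallness of $t_{21}$ (respectively $t_{12}$). The only cosmetic difference is that you invoke \eqref{Wronskian1-} instead of \eqref{Wronskian1+} for the diagonal entries, which is an equivalent choice of evaluation point, and you are slightly more explicit than the paper about the need to keep the amplitude base points inside $\Omega_\pm$ when bounding the bracketed factor in \eqref{Wronskian2}.
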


\begin{proof}
Introduce the auxiliary solutions
\begin{align*}
\widetilde u_1^+(x)&=u^+(x;0,y_0+2\pi),\\
\widetilde u_1^-(x)&=u^-(x;0,\bar y_0+2\pi),
\end{align*}
which differ from $u_1^+$ and $u_1^-$ only in the choice of phase base point. If $\widetilde T$ is the transition matrix defined by $(u_0^+\ u_0^-)=(\widetilde u_1^+\ \widetilde u_1^-)\widetilde T$,
then a simple computation gives 
\begin{equation*}
T=\bigg( \begin{array}{cc} e^{iI/h}& 0\\ 0& e^{-iI/h}\end{array} \bigg)\widetilde T,
\end{equation*}
with $I$ given above.

We next determine $\widetilde T=(t_{ij})$, noting that $t_{11}t_{22}-t_{12}t_{21}=1$ since $\det(T)=1$ by Proposition \ref{prop:persol}. Taking Wronskians we get
\begin{align*}
t_{11}&=\frac{\mathcal W(u_0^+,\widetilde u_1^-)}{W(\widetilde u_1^+,\widetilde u_1^-)}, & t_{12}&=\frac{\mathcal W(u_0^-,\widetilde u_1^-)}{W(\widetilde u_1^+,\widetilde u_1^-)},\\
t_{21}&=\frac{\mathcal W(\widetilde u_1^+,u_0^+)}{W(\widetilde u_1^+,\widetilde u_1^-)}, & t_{22}&=\frac{\mathcal W(\widetilde u_1^+,u_0^-)}{W(\widetilde u_1^+,\widetilde u_1^-)}.
\end{align*}
We first compute $\mathcal W(\widetilde u_1^+,\widetilde u_1^-)$. By the properties of $\re z(x)$, we can find a curve from $y_0+2\pi$ (in the upper half plane) to $\bar y_0+2\pi$ (in the lower half plane) along which $\re z(x)$ is strictly increasing. Evaluating the Wronskian at $\bar y_0+2\pi$ (see \eqref{Wronskian1+}) 
we obtain
\begin{equation*}
\mathcal W(\widetilde u_1^+,\widetilde u_1^-)=4iw^+_\mathrm{even}(\bar y_0+2\pi,h;y_0+2\pi),
\end{equation*}
which is $4i+O(h)$ by Remark \ref{h-asymptotic}.
Since we can also find curves from $y_0$ to $\bar y_0+2\pi$ and from $y_0+2\pi$ to $\bar y_0$ along which $\re z(x)$ is strictly increasing, the same arguments show that 
\begin{gather*}
\mathcal W(u_0^+,\widetilde u_1^-)=4iw^+_\mathrm{even}(\bar y_0+2\pi,h;y_0),\\
\mathcal W(\widetilde u_1^+,u_0^-)=4iw^+_\mathrm{even}(\bar y_0,h;y_0+2\pi),
\end{gather*}
and both are equal to $4i+O(h)$. Since $w^+_\mathrm{even}$ depends analytically on $\lambda$ we find that 
$t_{jj}=1+r_j$, where $r_j=r_j(\lambda,h)$ is analytic in $\lambda$ and $r_j=O(h)$ as $h\to0$.

For $\mathcal W(\widetilde u_1^+,u_0^+)$ and $\mathcal W(u_0^-,\widetilde u_1^-)$, we use the Wronskian formula \eqref{Wronskian2} for solutions of the same type. For $\mathcal W(\widetilde u_1^+,u_0^+)$, we evaluate \eqref{Wronskian2} at the point $x_1=i\eta$ above 0 for some small $\eta>0$. Then $\re z(x_1;0)<\re z(0;0)=0$, so $e^{z(x_1;0)/h}$ is exponentially decreasing as $h\to0$. For $\mathcal W(u_0^-,\widetilde u_1^-)$, we evaluate \eqref{Wronskian2} at the point $\bar x_1=-i\eta$ below 0. 
Then $\re -z(\bar x_1;0)<\re z(0;0)=0$, so $e^{-z(\bar x_1;0)/h}$ is exponentially decreasing as $h\to0$. Hence,
\begin{equation*}
t_{12},t_{21}=O(e^{-\delta/h}),\quad h\to0,
\end{equation*}
for some $\delta>0$, which completes the proof. 
\end{proof}

\begin{proof}[End of Proof of Theorem \ref{thm:A}]
By Proposition \ref{prop:persol} it follows that $\lambda\in B_\ve(\lambda_0)$ is an eigenvalue of $P(h)$ if and only if $\tr(T)=2$, i.e.,
\begin{equation*}
e^{iI/h}t_{11}-2+e^{-iI/h}t_{22}=0,
\end{equation*}
which is easily seen to yield \eqref{eq:theoremA}.
Multiplying with $e^{iI/h}/t_{11}$ and completing the square, an elementary computation using $t_{11}t_{22}-t_{12}t_{21}=1$ gives
\begin{equation*}
\bigg(e^{iI/h}-\frac{1+ic}{t_{11}}\bigg)\bigg(e^{iI/h}-\frac{1-ic}{t_{11}}\bigg)=0,
\end{equation*}
where $c=(t_{12}t_{21})^{1/2}$. 
Hence, $e^{iI/h}=1+O(h)$.
Taking logarithms we conclude that there is an integer $k\in\Z$ such that
\begin{equation*}
iI/h-2\pi i k=\log(1+O(h))=O(h).
\end{equation*}
This gives the desired quantization condition \eqref{quantizationcondition1}.
\end{proof}

\section{Eigenvalues in the presence of real turning points}\label{section:proofB}

We now turn to the proof of Theorem \ref{thm:B}, and we let $\lambda_0=i\mu_0$ with $\mu_0\in (V_1,V_0)$ be fixed. By assumption, all turning points for $\lambda_0$ are simple.
As in Section \ref{section:proofA} we begin by describing the configuration of Stokes lines for $\lambda_0$ before turning to general parameter values $\lambda$ close to $\lambda_0$.

\subsection{Turning points} 
As described in the introduction there are $2l$ turning points
\begin{equation*}
0<x_1(\mu_0)<\ldots<x_{2l}(\mu_0)<2\pi.
\end{equation*}
Since $V(0)=V_0$ is a local maximum, $x_1(\mu_0)$ must be a simple zero of $V-\mu_0$. Thus, $V'(x_1(\mu_0))< 0$ and by \eqref{eq:simpledoubleargs} we have 
\begin{equation*}
\Arg{W'(x_1(\mu_0))}=\Arg{\left[-2\mu_0 V'(x_1(\mu_0))\right]}=0,
\end{equation*}
so the Stokes lines starting at $x_1(\mu_0)$ have arguments $\pi/3$ mod $2\pi/3$. Basic calculus shows that $\Arg{W'(x_2(\mu_0))}=\pi$, so the Stokes lines starting at $x_2(\mu_0)$ have arguments $0$ mod $2\pi/3$. A moments reflection shows that this pattern repeats itself, with the Stokes lines starting at odd numbered turning points having arguments $\pi/3$ mod $2\pi/3$, and the Stokes lines starting at even numbered turning points having arguments $0$ mod $2\pi/3$; by periodicity, this includes the last turning point $x_{2l}(\mu_0)$ on $(0,2\pi)$. In particular, we see that there are bounded Stokes lines lying on $\R$ starting from even numbered turning points (on the left) and ending at odd numbered turning points (on the right). However, note that this is not a stable configuration and will not persist when $\lambda_0$ is perturbed off the imaginary axis.

We take
\begin{equation*}
z(x;x_j(\mu_0),\lambda_0)=i\int_{x_j(\mu_0)}^x(V(t)^2 -\mu_0^2)^{1/2}\de t, \quad\lambda_0=i\mu_0,
\end{equation*}
where the choice of turning point $x_j(\mu_0)$ will depend on the domain of interest, see \eqref{2pairs}--\eqref{1tildepair} below.
We define the Riemann surfaces of $z(x)$ and $H(z(x))$ over $D$ by introducing branch cuts from odd numbered turning points  along the Stokes lines with argument $-\pi/3$, and from even numbered turning points along the Stokes lines with argument $2\pi/3$.
We choose branches so that 
\begin{equation*}
H(z(0);\lambda_0)=\bigg(\frac{V(0)+\mu_0}{V(0)-\mu_0}\bigg)^{1/4}>0, \quad\lambda_0=i\mu_0.
\end{equation*}
Since $V(0)\pm\mu_0>0$, this also gives a determination of $(V^2-\mu_0^2)^{1/2}$ and therefore of $z(x)$, namely, $(V^2-\mu_0^2)^{1/2}>0$ at the origin. By applying \eqref{Taylor's} with $\lambda_0=i\mu_0$ we see that $\re z(x)$ is a strictly decreasing function of $\im x$ for $x$ near 0. This determines the behavior of $\re z(x)$ in any simply connected open set that intersects the imaginary axis, does not contain any turning points, and does not pass through a branch cut. In particular, there is also for each $j\ge1$ a region between $x_{2j}(\mu_0)$ and $x_{2j+1}(\mu_0)$ where $\re z(x)$ is a strictly decreasing function of $\im x$.

\begin{figure}
	\centering
\begin{tikzpicture}[scale=.75]
\begin{axis}[
    y={0.22\linewidth},
    x={0.22\linewidth},
   minor xtick={-1.37445, -1.1781, -0.98175, -0.58905,-0.3927, -0.19635, 0.19635, 0.3927, 0.58905, 0.98175, 1.1781, 1.37445, 1.76715, 1.9635, 2.15985, 2.55255, 2.7489, 2.94525},
   minor ytick={-0.4,-0.3,-0.2,-0.1,0.1,0.2,0.3,0.4},
   xtick      ={-0.78539, 0, 0.78539, 1.5708, 2.3562},
        xticklabels={$-\pi/4$, $0$, $\pi/4$, $\pi/2$, $3\pi/4$},    
            ytick={-0.5,0,0.5},
xmin=-1.6,
xmax=3.1,
ymin=-0.6,
ymax=0.6,
xlabel={$\re x$},
ylabel={$\im x$},
enlarge x limits=0.005,
enlarge y limits=0.005,
]
\addplot graphics[xmin=-1.57,ymin=-0.523,xmax=3.075,ymax=0.523]{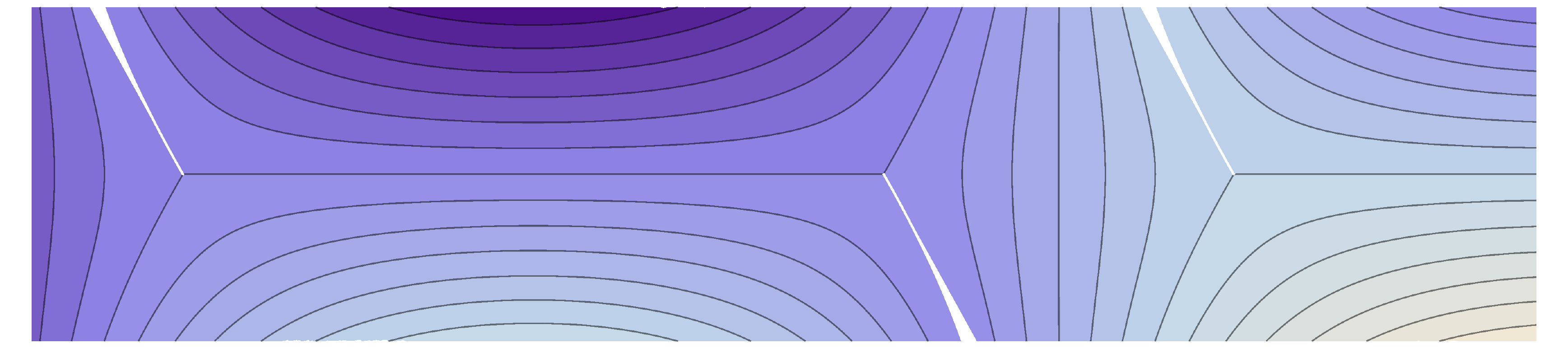};
\end{axis}
\end{tikzpicture}
\quad
\includegraphics[width=0.07\linewidth]{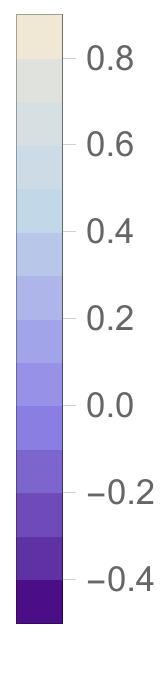}
\begin{tikzpicture}[scale=.75]
\begin{axis}[
    y={0.22\linewidth},
    x={0.22\linewidth},
   minor xtick={-1.37445, -1.1781, -0.98175, -0.58905,-0.3927, -0.19635, 0.19635, 0.3927, 0.58905, 0.98175, 1.1781, 1.37445, 1.76715, 1.9635, 2.15985, 2.55255, 2.7489, 2.94525},
   minor ytick={-0.4,-0.3,-0.2,-0.1,0.1,0.2,0.3,0.4},
   xtick      ={-0.78539, 0, 0.78539, 1.5708, 2.3562},
        xticklabels={$-\pi/4$, $0$, $\pi/4$, $\pi/2$, $3\pi/4$},    
            ytick={-0.5,0,0.5},
xmin=-1.6,
xmax=3.1,
ymin=-0.6,
ymax=0.6,
xlabel={$\re x$},
ylabel={$\im x$},
enlarge x limits=0.005,
enlarge y limits=0.005,
]
\addplot graphics[xmin=-1.57,ymin=-0.523,xmax=3.075,ymax=0.523]{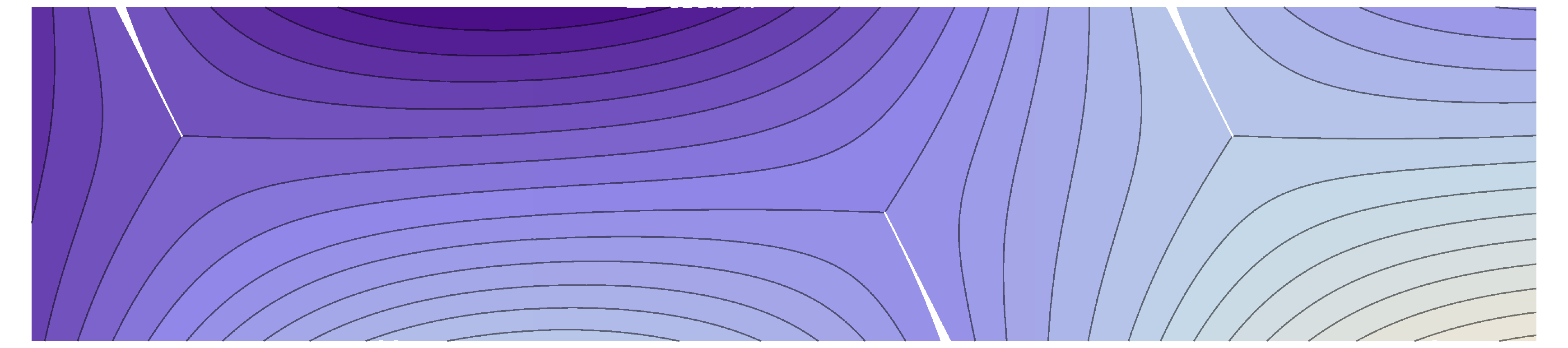};
\end{axis}
\end{tikzpicture}
\quad
\includegraphics[width=0.07\linewidth]{hard_legends.pdf}
\caption{\label{figure2} The configuration of Stokes lines and the behavior of $\re z(x;x_1,\lambda)$ for $V(x)=\cos x$, 
where $\lambda=i\mu$ and $x_1$ satisfies $\re x_1>0$, $\cos x_1=\mu$ (the turning point in the middle). The top panel describes the situation for $\mu=1/2$ and the bottom panel for $\mu=1/2+ i/10$. Branch cuts are located along (the curved edges of) the white regions.}
\end{figure}

When $\lambda_0$ is perturbed through rotation around the origin, the turning points are rotated around points on the real axis, e.g., $x_1$ and $x_2$ are rotated around the mid point $(x_1+x_2)/2$, see \cite{MR1635811}. Each bounded Stokes line lying on $\R$ will then split into two unbounded Stokes lines, but with the topology of the Stokes configuration otherwise unchanged. If the perturbation is allowed to continue, then at a rotation angle of around $\pi/4$ new bounded Stokes lines will appear between simple turning points,
and these lines will coincide with the integration paths of the action integrals $S_j(\mu)$ defined by \eqref{actionS}.
The first change is not significant for the proof of Theorem \ref{thm:B}, while the second change completely alters the behavior of $\re z(x)$ and is {\it not} permitted.
From now on we therefore fix $\ve>0$ such that the integration paths of $S_j(\mu)$ are not bounded Stokes lines when $\lambda\in B_\ve(\lambda_0)$, and we make sure that
\begin{equation}\label{positiverealpart}
\lambda\in B_\ve(\lambda_0)\quad\Longrightarrow\quad\re S_j(\mu)>0,\quad 1\le j\le l.
\end{equation}
We also take $\ve$ so small that if $\lambda\in B_\ve(\lambda_0)$ is purely imaginary, then the turning points are simple and the Stokes configuration is the same as for $\lambda_0$; in particular this means that $B_\ve(\lambda_0)$ does not contain $0$ and $i V_0$. Moreover, the arguments of Stokes lines at the turning points will be almost unchanged, so we place branch cuts as described for $\lambda=\lambda_0$, modified in the obvious manner. We also use the inherited determination of $H(z({\hdot}))$ and $(V^2-\mu^2)^{1/2}$, namely the one which has positive real part at the origin. 
Figure \ref{figure2} illustrates the configuration of Stokes lines near three consecutive turning points, including the location of branch cuts.

\subsection{The transition matrix}
Under the assumptions of Theorem \ref{thm:B}, the transition matrix $T$, as defined in Proposition \ref{prop:persol}, will consist of a product of intermediate transition matrices. These matrices will be of (at most) four types, which we now describe.

Let $\lambda=i\mu\in B_\ve(\lambda_0)$.
We fix amplitude base points $y_1,\ldots, y_l\in D$ in the upper half plane independent of $\lambda$ as in Figure \ref{figure3}
in such a way that $x_{2j}(\mu)<\re y_j<x_{2j+1}(\mu)$ and so that $ y_j$ is always in the
same region bounded by Stokes lines when $\lambda$ varies in $B_\varepsilon(\lambda_0)$. We then set $y_0=y_l-2\pi$.
Introduce the WKB solutions
\begin{equation}\label{2pairs}
u_j^+(x)=u^+(x;x_{2j+1}(\mu),y_j),\quad
u_j^-(x)=u^-(x;x_{2j+1}(\mu),\bar y_j),
\end{equation}
defined for $j=0,\ldots,l$ in accordance with \eqref{u}, where $u_l^\pm(x)=u_0^\pm(x-2\pi)$.
The transition from the pair $u^+_{j-1},u^-_{j-1}$ to the pair $u_j^+,u_j^-$ is one of the following four kinds:

\begin{itemize}
\item[$1^\circ$.] Both $x_{2j-1}(\mu)$ and $x_{2j}(\mu)$ are zeros of $V(x)-\mu$.
\item[$2^\circ$.] $x_{2j-1}(\mu)$ is a zero of $V(x)-\mu$ and $x_{2j}(\mu)$ is a zero of $V(x)+\mu$.
\item[$3^\circ$.] $x_{2j-1}(\mu)$ is a zero of $V(x)+\mu$ and $x_{2j}(\mu)$ is a zero of $V(x)-\mu$.
\item[$4^\circ$.] Both $x_{2j-1}(\mu)$ and $x_{2j}(\mu)$ are zeros of $V(x)+\mu$.
\end{itemize}
That there are no other kinds follows from the fact that since $V(0)=V_0$, the nature of the zeros $x_{2j-1}$ and $x_{2j}$ also determines the nature of $x_{2j+1}$, e.g., if both $x_{2j-1}$ and $x_{2j}$ are zeros of $V(x)-\mu$ then so is $x_{2j+1}$.
Introduce also the auxiliary solutions
\begin{equation}\label{1tildepair}
\widetilde u_j^+(x)=u^+(x;x_{2j}(\mu),y_j),\quad
\widetilde u_j^-(x)=u^-(x;x_{2j}(\mu),\bar y_j), \quad j=1,\ldots,l.
\end{equation}

\definecolor{qqqqff}{rgb}{0.368417, 0.506779, 0.709798}
\begin{figure}
	\centering
\begin{tikzpicture}[scale=.38]
\draw[color=qqqqff,dashed,thick] 
  (1.5,4.65) 
    .. controls (0.71,6.1) and (0.56,6.6) .. 
  (0.3,7.6) ; 
\draw[color=qqqqff,thick]
  (1.5,4.65) 
    .. controls (1.26,4.3) and (0.55,3) .. 
  (0.33,2.4) ;
\draw[color=qqqqff,thick]
  (1.5,4.65)
    .. controls (6.16,4.48) and (7.48,5.42) .. 
  (8.27,8.25)   ;
\draw[color=qqqqff,thick]
   (0.32,0.2)
    .. controls (1.1,2.83) and (2.16,3.96) .. 
  (7.13,3.75) ;
\draw[color=qqqqff,thick]
  (7.13,3.75) 
    .. controls (8,5.2) and (8.75,6.5) .. 
  (8.84,8.25) ; 
\draw[color=qqqqff,dashed,thick] 
   (8.4,0.2)
    .. controls (8.2,1.5) and (7.77,2.6) .. 
  (7.13,3.75) ;
\draw[color=qqqqff,thick]
   (23.68,8.25)  
    .. controls (21.9,5.67) and (20.84,4.54) ..  
  (12.87,4.75) ;
\draw[color=qqqqff,thick]
  (12.87,4.75) 
    .. controls (12,3.3) and (11.25,2) .. 
  (11.16,0.25) ; 
\draw[color=qqqqff,dashed,thick] 
   (11.6,8.25)
    .. controls (11.8,7) and (12.23,5.9) .. 
  (12.87,4.75) ;
\draw[color=qqqqff,thick]
  (23.33,3.95)
    .. controls (14.84,4.02) and (13.52,3.08) .. 
  (12.3,0.2)   ;
\draw[color=qqqqff,dashed,thick] 
   (24.6,0.4)
    .. controls (24.4,1.7) and (23.97,2.8) .. 
  (23.33,3.95) ;
\draw[color=qqqqff,thick]
  (23.33,3.95) 
    .. controls (23.8,4.72) and (24.35,5.6) .. 
  (24.6,6.24)  ; 
	\filldraw (1.5,4.65) circle[radius=3pt] node[left] {$x_{2j}$};
	\filldraw (7.13,3.75) circle[radius=3pt] node[right] {$x_{2j+1}$};
	\filldraw (12.87,4.75) circle[radius=3pt] node[left] {$x_{2j+2}$};
	\filldraw (23.33,3.95) circle[radius=3pt] node[right] {$x_{2j+3}$};
	\filldraw (4.315,8) circle[radius=3pt] node[below right] {$y_{j}$};
	\filldraw (4.315,0.4) circle[radius=3pt] node[below right] {$\bar y_{j}$};
	\filldraw (18.1,8) circle[radius=3pt] node[below right] {$y_{j+1}$};
	\filldraw (18.1,0.4) circle[radius=3pt] node[below right] {$\bar y_{j+1}$};
  \end{tikzpicture}
\caption{\label{figure3} The location of amplitude base points relative the neighboring turning points $x_k(\mu)$ over a partial period for generic $V$ and $\lambda=i\mu\in B_\ve(\lambda_0)$. Branch cuts are indicated by dashed lines.}
\end{figure}
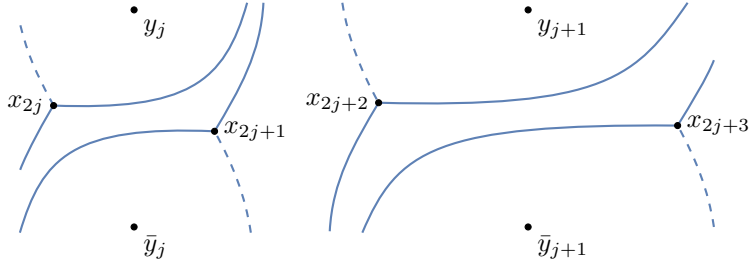

Let $T_j$ (resp.~$\widetilde T_j$) be the transition matrix between $u_{j-1}^+,u_{j-1}^-$ and $u_j^+,u_j^-$ (resp.  between $u_{j-1}^+,u_{j-1}^-$ and $\widetilde u_j^+,\widetilde u_j^-$):
\begin{equation}\label{tildeT_j}
(u_{j-1}^+\ u_{j-1}^-)=(u_j^+\ u_j^-)T_j, \quad (u_{j-1}^+\ u_{j-1}^-)=(\widetilde u_j^+\ \widetilde u_j^-)\widetilde T_j.
\end{equation}
For each for $j=1,\ldots, l$ it is clear that $\widetilde T_j$ is of the same transition type as $T_j$.
The transition matrix $T$ as defined in Proposition \ref{prop:persol} is given by
\begin{equation}\label{Tagain1}
T=T_l\cdots T_1.
\end{equation}

\begin{example}
In the case where $V(x)=\cos x$ and $\lambda=i\mu$ with $\mu\in(0,1)$, there are four turning points (i.e. $l=2$),
and $T=T_2T_1$. The matrix $T_1$ is of type $2^\circ$ and $T_2$ is of type $3^\circ$.
In the case where $V(x)=\tfrac{1}{3}(2+\cos x)$ and $\lambda=i\mu$ with $\mu\in(\tfrac{1}{3},1)$, there are two turning points (i.e. $l=1$),
and $T=T_1$. The matrix $T_1$ is of type $1^\circ$.
\end{example}

Recall the definition of the action integrals $I_j(\mu)$ in \eqref{actionS}. By straightforward computation we immediately obtain the following relationship between $T_j$ and $\widetilde T_j$. 
\begin{lemma}\label{Iaction}
\begin{equation*}
T_j=\bigg( \begin{array}{cc} e^{iI_j/h}& 0\\ 0& e^{-iI_j/h}\end{array} \bigg)\widetilde T_j.
\end{equation*}
\end{lemma}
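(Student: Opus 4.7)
The key observation is that $u_j^\pm$ and $\widetilde u_j^\pm$ differ only in the choice of phase base point (namely $x_{2j+1}(\mu)$ versus $x_{2j}(\mu)$), while sharing the amplitude base points $y_j$ and $\bar y_j$. In the representation \eqref{exactWKB}, the matrix-valued prefactor involving $Q(z(x))$ is a function of $x$ alone, since $H(z(x))$ depends on $x$ only through $V(x)$ and $\lambda$; likewise, the amplitude vector $w^\pm(x,h;\tilde x)$ depends only on $\tilde x$. Hence the sole effect of changing the phase base point from $x_{2j+1}(\mu)$ to $x_{2j}(\mu)$ is to multiply $u_j^\pm(x)$ by the global constant $e^{\pm z(x_{2j+1}(\mu);x_{2j}(\mu))/h}$.

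First, by the additivity of the phase integral \eqref{z} I would write
\begin{equation*}
z(x;x_{2j}(\mu)) = z(x;x_{2j+1}(\mu)) + z(x_{2j+1}(\mu);x_{2j}(\mu)),
\end{equation*}
whence $\widetilde u_j^\pm(x) = e^{\pm z(x_{2j+1}(\mu);x_{2j}(\mu))/h}\, u_j^\pm(x)$. Substituting this into the defining equation $(u_{j-1}^+\ u_{j-1}^-) = (\widetilde u_j^+\ \widetilde u_j^-)\widetilde T_j$ from \eqref{tildeT_j}, comparing with $(u_{j-1}^+\ u_{j-1}^-) = (u_j^+\ u_j^-)T_j$, and using that $u_j^+, u_j^-$ are linearly independent for $h$ small enough (by \eqref{Wronskian1+} together with Remark \ref{h-asymptotic}, applied along a path from $y_j$ to $\bar y_j$ on which $\re z$ is strictly increasing), I obtain
\begin{equation*}
T_j = \bigg(\begin{array}{cc} e^{z(x_{2j+1};x_{2j})/h} & 0 \\ 0 & e^{-z(x_{2j+1};x_{2j})/h} \end{array}\bigg)\widetilde T_j.
\end{equation*}

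The remaining step, which is the only one that requires any care and is the main (minor) obstacle in this computation, is to identify $z(x_{2j+1}(\mu);x_{2j}(\mu))$ with $iI_j(\mu)$. By the ordering of the turning points and the hypotheses of Theorem \ref{thm:B}, one has $V(t)^2 - \mu^2 > 0$ on the real segment $(x_{2j}(\mu), x_{2j+1}(\mu))$. The determination of $(V^2-\mu^2)^{1/2}$ was fixed in Section \ref{section:proofB} so as to be positive at $x=0$, so the point to verify is that this positive determination persists on each intermediate interval of the form $(x_{2k}(\mu),x_{2k+1}(\mu))$. Since the branch cuts emanate from odd-numbered turning points at argument $-\pi/3$ and from even-numbered turning points at argument $2\pi/3$, the real axis crosses no branch cut; a short local computation near each turning point (detouring odd-numbered ones through the upper half plane and even-numbered ones through the lower) shows that the positive determination is preserved throughout. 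Consequently,
\begin{equation*}
z(x_{2j+1}(\mu);x_{2j}(\mu)) = i\int_{x_{2j}(\mu)}^{x_{2j+1}(\mu)} (V(t)^2 - \mu^2)^{1/2}\de t = iI_j(\mu),
\end{equation*}
and the lemma follows.
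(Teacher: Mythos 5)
Your proof is correct and fills in the gap that the paper leaves open (the paper simply asserts Lemma \ref{Iaction} follows ``by straightforward computation'' without supplying any argument). You correctly identify that only the phase prefactor $e^{\pm z(x;x_0)/h}$ depends on the phase base point (the $Q$-factor depends on $x$ only through $V$ and $\lambda$, and the transport equations for $w^\pm$, when written in the $x$-variable via the chain rule with $z'(x)=i\sqrt{V(x)^2+\lambda^2}$, do not see $x_0$), so $\widetilde u_j^\pm = e^{\pm z(x_{2j+1};x_{2j})/h}u_j^\pm$; and you correctly pin down that the determination $(V^2-\mu_0^2)^{1/2}>0$ at the origin propagates unchanged to each interval $(x_{2k},x_{2k+1})$, because the detour around an odd turning point (upper half plane, cut at $-\pi/3$) multiplies the square root by $e^{-i\pi/2}$ while the detour around the following even turning point (lower half plane, cut at $2\pi/3$) multiplies it by $e^{i\pi/2}$, and these cancel pairwise, giving $z(x_{2j+1};x_{2j})=iI_j$.
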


We now describe the different types of transition matrices.

\begin{theorem}\label{thm:transitionALL}
 If $T_j$ is  of type $m^\circ$, then
\begin{equation*}
T_j(\lambda,h)=e^{S_j(\mu)/h}\bigg( \begin{array}{cc} e^{iI_j/h}& 0\\ 0& e^{-iI_j/h}\end{array} \bigg)\Big( E_m+R_j(\lambda,h)\Big),
\end{equation*}
where the matrix $R_j(\lambda,h)$ depends analytically on $\lambda$ in $B_\ve(\lambda_0)$ for some positive $\ve$ and is uniformly of $O(h)$ there as $h\to0$, and
\begin{equation*}
E_1=\bigg( \begin{array}{cc} 1& i\\ -i& 1\end{array} \bigg),  \
E_2=\bigg( \begin{array}{cc} 1& i\\ i& -1\end{array} \bigg), \
E_3=\bigg( \begin{array}{cc} 1& -i\\ -i& -1\end{array} \bigg), \ 
E_4=\bigg( \begin{array}{cc} 1& -i\\ i& 1\end{array} \bigg).
\end{equation*}
\end{theorem}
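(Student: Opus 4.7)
The plan is to invoke Lemma \ref{Iaction} in order to reduce the claim to showing that
\begin{equation*}
\widetilde T_j(\lambda,h) = e^{S_j(\mu)/h}\bigl(E_m + R_j(\lambda,h)\bigr),
\end{equation*}
and then to compute each entry of $\widetilde T_j$ as a ratio of Wronskians. Taking Wronskians on both sides of the defining relation (\ref{tildeT_j}) for $\widetilde T_j$ with $\widetilde u_j^+$ or $\widetilde u_j^-$ yields, for instance,
\begin{equation*}
\widetilde t_{11} = \frac{\mathcal W(u_{j-1}^+, \widetilde u_j^-)}{\mathcal W(\widetilde u_j^+, \widetilde u_j^-)}, \qquad
\widetilde t_{12} = \frac{\mathcal W(u_{j-1}^-, \widetilde u_j^-)}{\mathcal W(\widetilde u_j^+, \widetilde u_j^-)},
\end{equation*}
with analogous formulas for $\widetilde t_{21}$ and $\widetilde t_{22}$. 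The common denominator is the easy piece: since $\widetilde u_j^+$ and $\widetilde u_j^-$ share the phase base point $x_{2j}(\mu)$, and since $\re z(\cdot;x_{2j}(\mu))$ is strictly decreasing in $\im x$ in a tubular neighborhood of the segment between $x_{2j}(\mu)$ and $x_{2j+1}(\mu)$ by the Stokes geometry discussed above, one can connect $y_j$ to $\bar y_j$ along a path of strictly increasing $\re z$. Applying \eqref{Wronskian1+} together with Remark \ref{h-asymptotic} then gives $\mathcal W(\widetilde u_j^+, \widetilde u_j^-) = 4i + O(h)$, holomorphic in $\lambda \in B_\ve(\lambda_0)$.

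The substance of the proof lies in the numerator Wronskians, where the solution pair $u_{j-1}^\pm$ has phase base point $x_{2j-1}(\mu)$ and amplitude base points $y_{j-1}, \bar y_{j-1}$, while $\widetilde u_j^\pm$ uses $x_{2j}(\mu)$ and $y_j, \bar y_j$. Shifting the phase base point in $u_{j-1}^\pm$ from $x_{2j-1}(\mu)$ to $x_{2j}(\mu)$ produces, via \eqref{exactWKB}, the multiplicative factor $e^{\pm z(x_{2j}(\mu);x_{2j-1}(\mu))/h}$; because $V^2 < \mu^2$ strictly on the interval $(x_{2j-1}(\mu),x_{2j}(\mu))$, this exponent is purely imaginary for real $\mu$ and a careful branch identification on the Riemann surface of $(V^2-\mu^2)^{1/2}$ relates it to $\pm S_j(\mu)$, ultimately producing the common prefactor $e^{S_j/h}$ claimed in the theorem. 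The amplitude base points $y_{j-1}$ and $y_j$ lie, however, in regions separated by the pair of turning points $x_{2j-1}, x_{2j}$ and the branch cuts emanating from them, so comparing the two pairs of solutions requires tracking how the amplitude factor $H(z(x))$ from \eqref{defQ} transforms across each cut. This transformation depends on whether the cut originates at a zero of $iV+\lambda$ or of $iV-\lambda$, and the four possible combinations of the two turning points are precisely what distinguish the four matrices $E_1,\ldots,E_4$.

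The main obstacle is this branch-cut bookkeeping, which is technical and case-dependent, and which also governs how the apparently disparate $e^{\pm S_j/h}$ contributions to the two columns of $\widetilde T_j$ are reconciled into a single overall factor $e^{S_j/h}$, with any exponentially small residual absorbed into the $O(h)$ matrix $R_j$ via Remark \ref{h-asymptotic}. Once the leading matrix is identified in each of the four cases, analyticity of $R_j(\lambda,h)$ in $\lambda \in B_\ve(\lambda_0)$ follows from the analytic dependence of the WKB coefficients $w_n^\pm(z)$ on $\lambda$ and of the simple turning points $x_k(\mu)$ on $\mu$. The detailed case-by-case verification, which is the technical heart of the paper, is deferred to Section \ref{section:sheets}.
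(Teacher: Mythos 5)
The high-level strategy matches the paper's: reduce to $\widetilde T_j$ via Lemma~\ref{Iaction}, express the entries as ratios of Wronskians via \eqref{t_ij}, compute the denominator with \eqref{Wronskian1+} and Remark~\ref{h-asymptotic}, and recognize that the four transition types $E_1,\ldots,E_4$ come from the branch behavior of $H(z(\cdot))$ across the two cuts at $x_{2j-1}$ and $x_{2j}$. The detailed case-by-case work is deferred to Section~\ref{section:sheets}, just as the paper does.

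However, your explanation of where the common prefactor $e^{S_j/h}$ comes from is off, and as stated it would not give the claimed leading matrix $E_m$. You propose shifting the phase base point of $u_{j-1}^\pm$ from $x_{2j-1}$ to $x_{2j}$, obtaining $e^{\pm S_j/h}$, and then ``absorbing the exponentially small residual into $O(h)$.'' If that were the mechanism, the entries in the column tied to $u_{j-1}^-$ would be $e^{S_j/h}\cdot O(e^{-2S_j/h}) = O(e^{-S_j/h})$ after factoring out $e^{S_j/h}$ --- i.e., zero to leading order --- contradicting the nonzero entries of $E_m$. In fact, each of the four entries of $\widetilde T_j$ carries the prefactor $e^{S_j/h}$ \emph{exactly}, not up to an absorbed error. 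This is arranged by a careful choice of which solution's phase base point to shift: for $t_{11}$ and $t_{21}$ one shifts $u_{j-1}^+$ (giving $e^{+S_j/h}$), for $t_{12}$ one shifts $\widetilde u_j^-$ from $x_{2j}$ to $x_{2j-1}$ (also giving $e^{+S_j/h}$), and for $t_{22}$ the exponent $z(\cdot;x_{2j})-z(\cdot;x_{2j-1})$ is identified with $S_j$ by following a path through the relevant sheet of the Riemann surface. The other crucial device your sketch does not mention is Lemma~\ref{rewritinglemma}, which converts $u^\pm$ into $u^\mp$ by passing to the $\hat x$- or $\check x$-sheet; this is what makes \eqref{Wronskian1+} (rather than the same-type formula \eqref{Wronskian2}, which would not give the correct asymptotics here) applicable to every numerator Wronskian, and it is also the source of the signs $\pm i, \pm 1$ distinguishing $E_1,\ldots,E_4$.
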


We postpone the proof of Theorem \ref{thm:transitionALL} to Section \ref{section:sheets}, where it will be an immediate consequence of Theorem \ref{thm:transition2} and Theorems \ref{thm:transition3}--\ref{thm:transition4} together with Lemma \ref{Iaction}.

\subsection{Computing the trace}

By \eqref{Tagain1}, $T$ is a product of matrices of transitions $1^\circ$ -- $4^\circ$, the precise nature of which depends on the potential $V$. However, in a sense made precise below, the trace of $T$ does not. 
Note that there are some restrictions on the possible factors in a product of intermediate transition matrices such as \eqref{Tagain1}: if we for the moment write $T_j^{(m)}$ to indicate that $T_j$ is a transition matrix of type $m^\circ$,
then \eqref{Tagain1} cannot contain any of the factors
\begin{equation*}
T^{(1)}_{j+1}T^{(4)}_j,\quad T^{(4)}_{j+1}T^{(1)}_j,\quad
T^{(3)}_{j+1}T^{(1)}_j,\quad T^{(1)}_{j+1}T^{(2)}_j,\quad 
T^{(2)}_{j+1}T^{(4)}_j,\quad T^{(4)}_{j+1}T^{(3)}_j
\end{equation*}
and by periodicity it must contain an equal number of matrices of transitions $2^\circ$ and $3^\circ$.

\begin{proposition}\label{prop:trace}
For $l\ge1$ let $T$ be given by \eqref{Tagain1}. Then
\begin{equation*}
\tr(T)=\exp\bigg(\sum_{j=1}^lS_j/h\bigg)\bigg(2^l\prod_{j=1}^{l}\cos(I_j/h)+r(\lambda,h)\bigg),
\end{equation*}
where $r(\lambda,h)$ depends analytically on $\lambda$ in $B_\ve(\lambda_0)$ and is uniformly of $O(h)$ there as $h\to0$.
\end{proposition}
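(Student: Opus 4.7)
The plan is to combine Theorem \ref{thm:transitionALL} with a rank-one factorization of the matrices $E_m$ to telescope the product defining $T$, and then to verify a purely algebraic identity that forces each telescoped factor to collapse to $2\cos(I_j/h)$.

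\textbf{Step 1 (extracting the prefactor).} By Theorem \ref{thm:transitionALL}, for each $j$ we can write $T_j = e^{S_j/h}D_j(E_{m_j} + R_j)$, where $D_j$ is the diagonal matrix with entries $e^{iI_j/h}$ and $e^{-iI_j/h}$, $m_j \in \{1,2,3,4\}$ is the type of the $j$-th transition, and $R_j(\lambda,h) = O(h)$ uniformly in $\lambda \in B_\ve(\lambda_0)$. Substituting into \eqref{Tagain1} gives
\begin{equation*}
T = \exp\Bigl(\sum_{j=1}^{l} S_j/h\Bigr)\, D_l(E_{m_l}+R_l)\, D_{l-1}(E_{m_{l-1}}+R_{l-1})\cdots D_1(E_{m_1}+R_1),
\end{equation*}
which already produces the prefactor $\exp(\sum_j S_j/h)$ appearing in the claim.

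\textbf{Step 2 (rank-one factorization of $E_m$).} A direct inspection of the matrices in Theorem \ref{thm:transitionALL} shows that each $E_m$ has rank one: $E_m = v_m w_m^{\,t}$ with $v_m = {}^t(1,b_m)$, $w_m = {}^t(1,a_m)$, and
\begin{equation*}
(a_1,b_1)=(i,-i),\quad (a_2,b_2)=(i,i),\quad (a_3,b_3)=(-i,-i),\quad (a_4,b_4)=(-i,i).
\end{equation*}
Consequently $D_jE_{m_j} = (D_jv_{m_j})\,w_{m_j}^{\,t}$ is rank one as well.

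\textbf{Step 3 (telescoping and leading trace).} Expand the product in Step 1 multilinearly. The term in which no $R_j$ appears is $D_lE_{m_l}D_{l-1}E_{m_{l-1}}\cdots D_1E_{m_1}$, and the rank-one factorization reduces it to
\begin{equation*}
(D_lv_{m_l})\Bigl[\prod_{j=1}^{l-1} w_{m_{j+1}}^{\,t}D_jv_{m_j}\Bigr] w_{m_1}^{\,t}.
\end{equation*}
Taking the trace and using its cyclic invariance gives
\begin{equation*}
\tr\bigl(D_lE_{m_l}\cdots D_1E_{m_1}\bigr) = \prod_{j=1}^{l} \sigma_j, \qquad \sigma_j := w_{m_{j+1}}^{\,t}D_jv_{m_j},
\end{equation*}
with the periodic convention $m_{l+1} := m_1$. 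A direct computation gives
\begin{equation*}
\sigma_j = e^{iI_j/h} + a_{m_{j+1}}b_{m_j}\, e^{-iI_j/h}.
\end{equation*}

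\textbf{Step 4 (the key algebraic identity).} Here one verifies that $a_{m_{j+1}}b_{m_j} = 1$ for every pair $(m_{j+1},m_j)$ compatible with the restrictions recorded before the proposition, as well as for the wrap-around pair $(m_1,m_l)$. The mechanism is transparent from the table in Step 2: $a_m$ records only whether the transition $T_m$ begins at a zero of $V-\mu$ (giving $a_m=i$, i.e.\ $m\in\{1,2\}$) or of $V+\mu$ (giving $a_m=-i$, i.e.\ $m\in\{3,4\}$), and $b_m$ records the analogous information at the exit. Compatibility of consecutive transitions forces $a_{m_{j+1}}$ and $b_{m_j}$ to refer to the same intermediate turning point $x_{2j+1}$, so their product is $i\cdot(-i)=1$ if that point is a zero of $V-\mu$, and $(-i)\cdot i=1$ otherwise. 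Therefore $\sigma_j=2\cos(I_j/h)$ for each $j$, and the leading trace equals $2^l\prod_{j=1}^{l}\cos(I_j/h)$.

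\textbf{Step 5 (error bookkeeping).} The remaining $2^l-1$ cross terms in the multilinear expansion each carry at least one factor $R_j=O(h)$. Each is a scalar polynomial in the entries of $D_j$, $E_{m_j}$, and $R_j$, hence analytic in $\lambda\in B_\ve(\lambda_0)$ and, upon taking $\ve$ small enough to control the exponentials $e^{\pm iI_j/h}$, uniformly of order $h$ as $h\to 0$. Summing their traces defines the remainder $r(\lambda,h)$ with the advertised properties, completing the proof.

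\textbf{Main obstacle.} The essential content is the algebraic identity in Step 4, which is where the type-by-type analysis of the four transition matrices $E_m$ finally pays off: once the rank-one decomposition is in hand, the compatibility rules governing consecutive Stokes patterns pair the "exit" index of $T_j$ with the "entry" index of $T_{j+1}$ in precisely the way needed to cancel the spurious $\pm$ phase in each $\sigma_j$. This is the precise manifestation of the type-insensitivity announced in the introduction. Step 5 is then a routine multilinear bookkeeping argument.
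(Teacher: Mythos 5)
Your proof is correct, but it follows a genuinely different and, for this particular purpose, slicker route than the paper's. The paper's proof introduces the auxiliary matrices
\begin{equation*}
A_\pm(j,k)=g(j,k)\bigg(\begin{array}{cc}e^{iI_j/h}&0\\ 0&e^{-iI_j/h}\end{array}\bigg)\bigg(\begin{array}{cc}1&\pm i\\ \mp i&1\end{array}\bigg),
\qquad g(j,k)=2^{j-k}\prod_{n=k}^{j-1}\cos(I_n/h),
\end{equation*}
proves the multiplicative identity $A_\pm(j,n+1)A_\pm(n,k)=A_\pm(j,k)$, and then runs an induction that carefully tracks blocks of type $1^\circ$, $4^\circ$, and alternating $2^\circ$/$3^\circ$ matrices, ultimately showing that the full $R$-free product $D_l E_{m_l}\cdots D_1 E_{m_1}$ equals $A_\pm(l,1)$; the trace formula is then read off. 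Your observation that each $E_m$ is rank one, $E_m=v_m w_m^{\,t}$, replaces that induction with a single telescoping computation yielding $\tr(D_lE_{m_l}\cdots D_1E_{m_1})=\prod_j\sigma_j$, and all case-distinctions collapse into the scalar identity $a_{m_{j+1}}b_{m_j}=1$. This is more economical because it aims directly at the trace rather than the whole matrix product, and it exposes the type-insensitivity as a local cancellation at each intermediate turning point rather than as a feature of block structure. What the paper's approach buys, by contrast, is the explicit matrix identity $D_lE_{m_l}\cdots D_1E_{m_1}=A_\pm(l,1)$, which is strictly more information than the trace; for the present proposition that extra information is unused.

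One wording clarification in your Step 4: $b_{m_j}$ encodes the nature (zero of $V-\mu$ versus $V+\mu$) of $x_{2j}$, the \emph{exit} turning point of the $j$-th transition, while $a_{m_{j+1}}$ encodes the nature of $x_{2j+1}$, the \emph{entry} turning point of the $(j+1)$-st transition. These are distinct turning points, not literally ``the same intermediate turning point.'' The identity $a_{m_{j+1}}b_{m_j}=1$ holds because $x_{2j}$ and $x_{2j+1}$ bound the $I_j$-region where $V^2>\mu^2$, and on such a region $V$ cannot change sign, so both endpoints are zeros of the same factor $V\mp\mu$; by periodicity this also covers the wrap-around pair $(x_{2l},x_1)$. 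With that minor rephrasing your argument is exactly right, and it reproduces the forbidden-pair list preceding the proposition (indeed it reveals two additional forbidden pairs, $(2,2)$ and $(3,3)$, that the paper omits but that follow from the same compatibility constraint).
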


\begin{proof}
For $k\le j$ we let 
$g(j,k)=2^{j-k}\prod_{n=k}^{j-1} \cos(I_n/h)$ with the convention that $g(j,j)=1$, and set 
\begin{equation*}
A_\pm(j,k)=g(j,k)\bigg(\begin{array}{cc}e^{iI_j/h}&0\\ 0&e^{-iI_j/h}\end{array}\bigg)\bigg(\begin{array}{cc}1&\pm i\\ \mp i&1\end{array}\bigg).
\end{equation*}
In view of Theorem \ref{thm:transitionALL}, the result follows if we show that
\begin{equation}\label{Tformula}
\bigg(\begin{array}{cc}e^{iI_l/h}&0\\ 0&e^{-iI_l/h}\end{array}\bigg) E_{m_l}\cdots \bigg(\begin{array}{cc}e^{iI_1/h}&0\\ 0&e^{-iI_1/h}\end{array}\bigg) E_{m_1}=A_\pm(l,1).
\end{equation}
It is straightforward to check that for each $k\le n\le j-1$ we have 
\begin{equation*}
g(j,n+1)\cdot 2\cos(I_n/h)\cdot g(n,k)=g(j,k),
\end{equation*}
and that as a result thereof
\begin{equation}\label{induction1}
A_\pm(j,n+1)A_\pm(n,k)=A_\pm(j,k),\quad k\le n\le j-1.
\end{equation}
Now, \eqref{Tformula} is clearly true when $l=1$, since $E_{m_1}$ is necessarily of type $1^\circ$ or $4^\circ$ then, i.e., $m_1=1$ or $m_1=4$.
When all the $E_{m_j}$ are of type $1^\circ$, it is easy to see that \eqref{Tformula} holds with $A_+(l,1)$ on the right by using induction with respect to $l$ and applying \eqref{induction1} with $j=l$, $n=l-1$ and $k=1$.
If all the $E_{m_j}$ are of type $4^\circ$ one obtains \eqref{Tformula} with $A_-(l,1)$ on the right in the same way.

It remains to prove \eqref{Tformula} when $\{E_{m_1},\ldots, E_{m_l}\}$ contains at least one pair of matrices of type $2^\circ$ and $3^\circ$. Write
\begin{equation*}
F_j=\bigg(\begin{array}{cc}e^{iI_j/h}&0\\ 0&e^{-iI_j/h}\end{array}\bigg) E_{m_j},
\end{equation*}
and say that $F_j$ is of type $m^\circ$ if $m_j=m$. Since the trace is invariant under cyclic permutations, we may without loss of generality assume that $F_1$ is of type $2^\circ$. To the left of $F_1$ there must be a block of type $4^\circ$ matrices of length $k-1\ge0$, followed by a type $3^\circ$ matrix. If $k\ge2$ then the first paragraph shows that this block $F_k\cdots F_2$ of type $4^\circ$ matrices is equal to $A_-(k,2)$. It is then easy to see that
\begin{equation*}
F_{k+1} F_k\cdots F_2 F_1=F_{k+1}A_-(k,2)F_1=A_+(k+1,1).
\end{equation*} 
Now, to the left of this block there can be a block of type $1^\circ$ matrices of length $\ge0$, followed by another block of the same kind as $F_{k+1}\cdots F_1$ of length $\ge0$, and this is repeated a finite number of times until the left-hand side of \eqref{Tformula} is exhausted. But since both types of blocks have already been treated, \eqref{Tformula} follows by virtue of \eqref{induction1}.
\end{proof}

\begin{proof}[End of Proof of Theorem \ref{thm:B}]
Let $T$ be given by \eqref{Tagain1}.
By Proposition \ref{prop:persol}, $\lambda$ is an eigenvalue of $P(h)$ if and only if $\tr(T)=2$.
For $\lambda\in B_\ve(\lambda_0)$, we thus obtain \eqref{eq:theoremB} by applying Proposition \ref{prop:trace}.
In particular, \eqref{positiverealpart} gives
\begin{equation*}
\prod_{j=1}^{l}\cos(I_j/h)=2^{1-l}\exp\bigg(-\sum_{j=1}^lS_j/h\bigg)+O(h)=O(h).
\end{equation*}
Hence, for some $1\le j\le l$ we must have $\cos(I_j/h)=O(h^{1/l})$, so
\begin{equation*}
I_j/h=(\tfrac{1}{2}+k)\pi+O(h^{1/l})
\end{equation*}
which yields the quantization condition \eqref{quantizationcondition2} and the proof is complete.
\end{proof}

\section{Structure results for transition matrices}\label{section:sheets}

Here we prove Theorem \ref{thm:transitionALL} by studying each matrix of transition $1^\circ$ -- $4^\circ$ separately, starting with transition $2^\circ$. 
In view of Lemma \ref{Iaction} it suffices to consider the auxiliary transition matrices $\widetilde T_j$.

\begin{theorem}\label{thm:transition2}
Let $\widetilde T_j$ be the transition matrix defined by \eqref{tildeT_j}.
If $\widetilde T_j$ is of type $2^\circ$ then
\begin{equation*}
\widetilde T_j(\lambda,h)=e^{S_j(\mu)/h}\bigg(\begin{array}{cc} a_j(\lambda,h) & ib_j(\lambda,h) \\ ic_j(\lambda,h) & -d_j(\lambda,h)\end{array}\bigg),
\end{equation*}
where $a_j,b_j,c_j,d_j$ depend analytically on $\lambda$ in $B_\ve(\lambda_0)$ and are there equal to $1+O(h)$ uniformly as $h\to0$, and where $S_j(\mu)$ is the action integral given by \eqref{actionS}.
\end{theorem}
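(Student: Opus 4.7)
The plan is to compute each of the four entries of the $2\times 2$ matrix $\widetilde T_j$ as a ratio of Wronskians, in the spirit of the proof of Proposition \ref{prop:Tsimplecase}, carefully tracking the exponential factors introduced by the fact that $u_{j-1}^\pm$ and $\widetilde u_j^\pm$ have different phase base points $x_{2j-1}$ and $x_{2j}$, separated by the forbidden region $(x_{2j-1},x_{2j})$ where $V^2<\mu^2$. Writing $\widetilde T_j=(t_{ik})$ and applying Cramer's rule to $(u_{j-1}^+\ u_{j-1}^-)=(\widetilde u_j^+\ \widetilde u_j^-)\widetilde T_j$ expresses each $t_{ik}$ as a ratio of two Wronskians with common denominator $\mathcal W(\widetilde u_j^+,\widetilde u_j^-)$. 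This denominator is evaluated via \eqref{Wronskian1+} at $\bar y_j$ along the vertical segment from $y_j$, which passes through the allowed region $(x_{2j},x_{2j+1})$ where $\re z$ is strictly decreasing in $\im x$; Remark \ref{h-asymptotic} then gives $\mathcal W(\widetilde u_j^+,\widetilde u_j^-)=4i+O(h)$.

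To handle the numerators I would first reduce $u_{j-1}^\pm$ to the common base point $x_{2j}$ via the identity
\begin{equation*}
u^\pm(x;x_0',\tilde x)=e^{\pm z(x_0;x_0')/h}\,u^\pm(x;x_0,\tilde x),
\end{equation*}
immediate from \eqref{exactWKB} and \eqref{z}. The key branch calculation is $z(x_{2j};x_{2j-1})=S_j(\mu)$. This is proved by analytically continuing the determination of $(V^2-\mu^2)^{1/2}$ that is positive at the origin along $\R$ through $x_{2j-1}$ (whose branch cut at argument $-\pi/3$ lies in the lower half plane, and is therefore not encountered), which yields $(V^2-\mu^2)^{1/2}=-i(\mu^2-V^2)^{1/2}$ just east of $x_{2j-1}$; integrating $i(V^2-\mu^2)^{1/2}$ from $x_{2j-1}$ to $x_{2j}$ then produces exactly $S_j(\mu)$, in agreement with \eqref{positiverealpart}. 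Consequently $u_{j-1}^+=e^{S_j/h}u^+(\cdot;x_{2j},y_{j-1})$ and $u_{j-1}^-=e^{-S_j/h}u^-(\cdot;x_{2j},\bar y_{j-1})$, producing the overall $e^{S_j/h}$ prefactor in every entry of $\widetilde T_j$.

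For the diagonal entries $t_{11},t_{22}$ the numerators are now mixed-type Wronskians with common base point $x_{2j}$ and are evaluated by \eqref{Wronskian1+}--\eqref{Wronskian1-} along suitably chosen paths through the forbidden region, yielding $4i+O(h)$ times the appropriate exponential; the sign $-1$ in the $(2,2)$-entry comes from the rotation of the branch of the matrix $Q$ in \eqref{defQ} between $x_{2j-1}$ (where $V-\mu=0$ forces $H\to\infty$) and $x_{2j}$ (where $V+\mu=0$ forces $H\to0$). For the off-diagonal entries $t_{12},t_{21}$ the numerators are same-type Wronskians, so one uses \eqref{Wronskian2}: choosing the evaluation point near $x_{2j-1}$ causes the exponential $e^{\mp 2z(x;x_{2j})/h}\approx e^{\pm 2S_j/h}$ to combine with the base-point factor $e^{\mp S_j/h}$ into a residual $e^{\pm S_j/h}$, and the same rotation of $Q$ injects the factors of $i$ present in $ib_j,ic_j$. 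Analyticity in $\lambda\in B_\ve(\lambda_0)$ of the resulting coefficients $a_j,b_j,c_j,d_j$ is inherited from that of $z$, $H$ and $w^\pm_\mathrm{even}$.

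The main obstacle is the branch bookkeeping just mentioned, which must simultaneously control the Riemann surfaces of $(V^2-\mu^2)^{1/2}$ and of $H(z(\hdot))$ across two simple turning points of opposite type, subject to the branch cut conventions of Section \ref{section:proofB}; this is what ultimately distinguishes transitions of type $2^\circ$ from those of type $1^\circ$, $3^\circ$, $4^\circ$. A secondary delicacy is the off-diagonal calculation itself: since the natural evaluation points that reduce \eqref{Wronskian2} via initial conditions on $w^\pm$ are unavailable in the forbidden-region setting, one must both identify an evaluation point where the exponential balance occurs and verify that the residual amplitude bracket is of the correct order in the sense of Remark \ref{h-asymptotic} along an admissible path.
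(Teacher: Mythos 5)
Your overall framework is right — express the entries of $\widetilde T_j$ as ratios of Wronskians, rebase to a common phase base point, and identify $z(x_{2j};x_{2j-1})=S_j(\mu)$ via Lemma~\ref{lemma:S} — and the denominator and $t_{11}$ are handled correctly. But the remaining three entries contain a genuine gap: you never invoke the sheet-switching identity (Lemma~\ref{rewritinglemma}), which converts a WKB solution of type $u^\pm$ into one of type $u^\mp$ at the cost of a factor $\mp i$ by rotating once around a turning point, and this is the one ingredient that actually makes the off-diagonal and $(2,2)$ computations go through.

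Concretely, two of your claimed mechanisms do not produce the stated result. First, the rebasing $u_{j-1}^-=e^{-S_j/h}u^-(\cdot;x_{2j},\bar y_{j-1})$ yields a factor $e^{-S_j/h}$, \emph{not} $e^{S_j/h}$, in the $(1,2)$ and $(2,2)$ entries; so the statement ``producing the overall $e^{S_j/h}$ prefactor in every entry'' is already incorrect as written. The extra $e^{2S_j/h}$ needed to repair this comes precisely from evaluating $z(\cdot;x_{2j})-z(\cdot;x_{2j-1})$ along a path that passes through the branch cuts into the $\hat x$-sheet, where $(V^2-\mu^2)^{1/2}$ picks up an $e^{i\pi}$ and the naive value $-S_j$ becomes $+S_j$; without tracking the sheets there is no reason for the sign to flip. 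Second, for $t_{12}$ and $t_{21}$ you propose using the same-type formula \eqref{Wronskian2}. Even granting a good evaluation point, the amplitude bracket in \eqref{Wronskian2} is of the form $w^\pm_{\mathrm{even}}w^\pm_{\mathrm{odd}}-w^\pm_{\mathrm{even}}w^\pm_{\mathrm{odd}}$, which by Remark~\ref{h-asymptotic} is $O(h)$, not $1+O(h)$; applied naively this would make the off-diagonal entries a factor $h$ too small compared to the stated $ib_j,\,ic_j$ with $b_j,c_j=1+O(h)$. The paper avoids \eqref{Wronskian2} entirely here: it uses Lemma~\ref{rewritinglemma} to turn the same-type Wronskians into mixed-type ones, so that \eqref{Wronskian1+} applies with a path that crosses exactly one branch cut, giving the clean $\pm i\,(1+O(h))$. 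For $t_{22}$ the path must cross \emph{two} branch cuts, which simultaneously produces the sign $(-i)^2=-1$ and replaces $-S_j$ by $+S_j$. None of this bookkeeping can be reduced to ``the rotation of the branch of $Q$ between $x_{2j-1}$ and $x_{2j}$''; you need the explicit $\hat x$/$\check x$ sheet formalism and the accompanying conversion of $u^\pm$ to $u^\mp$ to make both the exponential and the $1+O(h)$ estimates come out right.
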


Before providing the full details of the proof, which requires some preparation, we briefly sketch the main idea: Write $T_j=(t_{mn})$. Taking Wronskians in \eqref{tildeT_j} we get
\begin{equation}\label{t_ij}
\begin{aligned}
t_{11}&=\frac{\mathcal W(u_{j-1}^+,\widetilde u_j^-)}{W(\widetilde u_j^+,\widetilde u_j^-)},\quad & \quad t_{12}&=\frac{\mathcal W(u_{j-1}^-,\widetilde u_j^-)}{W(\widetilde u_j^+,\widetilde u_j^-)},\\
t_{21}&=\frac{\mathcal W(\widetilde u_j^+,u_{j-1}^+)}{W(\widetilde u_j^+,\widetilde u_j^-)}, \quad & \quad t_{22}&=\frac{\mathcal W(\widetilde u_j^+,u_{j-1}^-)}{W(\widetilde u_j^+,\widetilde u_j^-)}.
\end{aligned}
\end{equation}
Here, $\mathcal W(u_{j-1}^+,\widetilde u_j^-)$ and $\mathcal W(\widetilde u_j^+,\widetilde u_j^-)$ can easily be computed using \eqref{Wronskian1+} and then estimated using Remark \ref{h-asymptotic}. However, in contrast to the case studied in Theorem \ref{thm:A}, we will not be able to use the Wronskian formula \eqref{Wronskian2} for solutions of the same type to handle $\mathcal W(\widetilde u_j^+,u_{j-1}^+)$ and $\mathcal W(u_{j-1}^-,\widetilde u_j^-)$, for these will no longer exhibit the same rapid decay. Instead, we shall express one of the WKB solutions in each Wronskian in the coordinates of a different sheet of the Riemann surface, thereby changing the type from $u^\pm$ to $u^\mp$. We can then use \eqref{Wronskian1+} to compute the Wronskian as normal, and estimate the result using Remark \ref{h-asymptotic}. For the computation of $\mathcal W(\widetilde u_j^+,u_{j-1}^-)$, the presence of branch cuts means that although the WKB solutions already are of different type, similar techniques have to be used to ensure that Remark \ref{h-asymptotic} is applicable.

The following observations are stated in sufficient generality to be useful in the sequel, but to anchor the discussion we use the assumptions of Theorem \ref{thm:transition2} as starting point, with primary goal of rewriting $\widetilde u_j^+(x)=u^+(x;{x_{2j}},y_j)$ as a solution of type $u^-$ in order to allow for the computation of $\mathcal W(\widetilde u_j^+,u_{j-1}^+)$.
Let $\mathcal R(x_0,\theta)$ denote the operator acting through rotation around $x_0$ by $\theta$ radians, so that, e.g., $\mathcal R(0,\theta)x=e^{i\theta}x$. Since $V-\mu$ is analytic it follows that if $V(x_0)-\mu=0$ then
\begin{equation}\label{eq:rotationformula}
V(\mathcal R(x_0,2\pi k)t)-\mu=e^{2i\pi k}(V(t)-\mu),\quad k\in\Z,
\end{equation}
i.e., when $t$ is rotated $2\pi k$ radians anticlockwise around $x_0$ then $V(t)-\mu$ is rotated $2\pi k$ radians anticlockwise around the origin. (Negative $k$ results in clockwise rotation by $2\pi|k|$ radians.)
We of course have similar behavior for $V+\mu$ when $V(x_0)+\mu=0$.

\begin{definition}\label{def:sheets}
Let ${x_{2j}}$ be a turning point such that $V({x_{2j}})+\mu=0$ (transitions $2^\circ$ and $4^\circ$). The point over $y_j$ that is obtained when rotating $y_j$ clockwise once around ${x_{2j}}$ will be denoted by $\hat y_j$, i.e.,
\begin{equation*}
\hat y_j=\mathcal R({x_{2j}},-2\pi)y_j.
\end{equation*}
More generally, the sheet reached (from the usual sheet) by entering the cut starting at ${x_{2j}}$ {\it from the left} will be referred to as the $\hat x$-sheet. 
The point over $y_j$ that is obtained when rotating $y_j$ anticlockwise once around ${x_{2j}}$ will be denoted by $\check y_j$, i.e.,
\begin{equation*}
\check y_j=\mathcal R({x_{2j}},2\pi)y_j.
\end{equation*}
The sheet reached (from the usual sheet) by entering the cut starting at ${x_{2j}}$ {\it from the right} will be referred to as the $\check x$-sheet. If instead $V({x_{2j}})-\mu=0$ (transitions $1^\circ$ and $3^\circ$) then all directions are to be reversed.
\end{definition}

When winding this way around a turning point we always assume that the path is appropriately deformed so as not to be obstructed by other branch cuts.
Informally, we think of $\hat x$ as lying in the sheet ``above'' the usual sheet, 
and $\check x$ as lying in the sheet ``below'' the usual sheet. The next lemma describes the relative direction of the branch cut starting at ${x_{2j-1}}$.

\begin{lemma}\label{lemma:sheet}
If $V({x_{2j-1}})-\mu=0$ then the $\hat x$-sheet is reached (from the usual sheet) by rotating anticlockwise once around ${x_{2j-1}}$, i.e., by entering the cut from the left. The $\check x$-sheet is reached (from the usual sheet) by rotating clockwise once around ${x_{2j-1}}$. If instead $V({x_{2j-1}})+\mu=0$ then the directions are reversed.
Moreover,
\begin{align}
H(z(\hat x))&=-iH(z(x)), \label{eq:Hhat}\\
H(z(\check x))&=iH(z(x)). \label{eq:Hcheck}
\end{align}
\end{lemma}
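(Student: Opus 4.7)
The plan is to reduce both assertions to the elementary monodromy of a fourth root function, applied to the meromorphic function $H^4$.

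First, I would observe that
\[
H(z(x))^4=\frac{iV(x)+\lambda}{iV(x)-\lambda}
\]
extends to a single-valued meromorphic function on $D$, whose only zeros are the turning points where $iV+\lambda$ vanishes, and whose only poles are the turning points where $iV-\lambda$ vanishes; all these are simple by the assumption of Theorem \ref{thm:B}. From the local forms $H^4(x)\sim c(x-a)^{\pm 1}$ near a simple zero or pole $a$, the monodromy of a fourth root gives the following: anticlockwise continuation of $H$ once around a zero multiplies its value by $e^{i\pi/2}=i$, and around a pole by $e^{-i\pi/2}=-i$; clockwise continuation produces the complex conjugate factors. Throughout, the encircling loops are to be deformed to avoid the remaining branch points, as explicitly allowed in the remark preceding the lemma.

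For \eqref{eq:Hhat} and \eqref{eq:Hcheck} I would then apply this directly to Definition \ref{def:sheets}. When $V(x_{2j})+\mu=0$ (transitions $2^\circ$ and $4^\circ$, so $x_{2j}$ is a simple zero of $H^4$), the clockwise rotation $\mathcal R(x_{2j},-2\pi)$ defining $\hat y_j$ multiplies $H$ by $-i$, proving \eqref{eq:Hhat}, while the anticlockwise rotation defining $\check y_j$ multiplies it by $+i$, proving \eqref{eq:Hcheck}. When instead $V(x_{2j})-\mu=0$ (transitions $1^\circ$ and $3^\circ$, $x_{2j}$ a simple pole), the reversed-direction convention in Definition \ref{def:sheets} compensates exactly, since anticlockwise continuation around a pole also produces the factor $-i$. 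Hence \eqref{eq:Hhat} and \eqref{eq:Hcheck} hold uniformly across all four transition types.

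For the first assertion, I would characterize the $\hat x$-sheet intrinsically as the sheet on which the value of $H$ equals $-i$ times its value on the usual sheet (and the $\check x$-sheet as the one with factor $+i$); this characterization is unambiguous because the monodromy generators $\pm i$ act as distinct multiplications on the four possible values of $H$ at a fixed base point. Reaching the $\hat x$-sheet via a single loop around $x_{2j-1}$ then reduces to choosing the rotation sense that produces the factor $-i$: by the monodromy formulas above, this is anticlockwise when $x_{2j-1}$ is a simple pole ($V(x_{2j-1})-\mu=0$) and clockwise when it is a simple zero ($V(x_{2j-1})+\mu=0$). The identification of the $\check x$-sheet proceeds identically with the factors swapped.

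The main delicate point is purely topological: one must verify that a loop encircling $x_{2j-1}$ only can be drawn within the domain so as not to meet the cut emanating from $x_{2j}$ (or from any further turning point). This is ensured by the Stokes geometry set up earlier in Section \ref{section:proofB}, which keeps neighboring cuts disjoint in the relevant portion of $D$, so that the single-branch-point monodromy computation genuinely applies.
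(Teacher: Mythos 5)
Your proof is correct and takes essentially the same approach as the paper: the paper computes the factor $-i$ directly from the explicit formula $H^4=(V+\mu)/(V-\mu)$ using the rotation formula \eqref{eq:rotationformula}, whereas you phrase the identical computation as the standard monodromy of a fourth root around the simple zeros and poles of $H^4$. Your intrinsic characterization of the $\hat x$- and $\check x$-sheets via the multiplicative factor on $H$ is the same device the paper uses when it remarks that $H(z(\cdot))$ takes distinct values on the four points over a fixed $x$.
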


The reason for wanting to reverse the directions in Definition \ref{def:sheets} when $V({x_{2j}})-\mu=0$ is to make sure that \eqref{eq:Hhat}--\eqref{eq:Hcheck} are always in force. In fact, these identities can be taken as definitions of the sheets.

\begin{proof}
Assume first that $V({x_{2j}})+\mu=0$.
Fix a point $x$ on the line segment from ${x_{2j-1}}$ to ${x_{2j}}$ in the area between the cuts, then $\hat x=\mathcal R({x_{2j}},-2\pi)x$.
When $x$ is rotated $-2\pi$ radians around ${x_{2j}}$, $V(x)+\mu$ is rotated $-2\pi$ radians around the origin, so
\begin{equation*}
H(z(\hat x))=\bigg(\frac{e^{-2\pi i}(V(x)+\mu)}{V(x)-\mu}\bigg)^{1/4}
=-iH(z(x)).
\end{equation*}
If instead $V({x_{2j}})-\mu=0$, then $\hat x=\mathcal R({x_{2j}},2\pi)x$, which by \eqref{eq:rotationformula} again yields
\begin{equation*}
H(z(\hat x))=\bigg(\frac{V(x)+\mu}{e^{2\pi i}(V(x)-\mu)}\bigg)^{1/4}
=-iH(z(x)).
\end{equation*}
Hence, \eqref{eq:Hhat} is still in force. One proves \eqref{eq:Hcheck} in the same way.

Next, suppose that $V({x_{2j-1}})-\mu=0$. To see that the $\hat x$-sheet is reached by entering the cut starting at ${x_{2j-1}}$ from the left, take $x$ on the line segment from ${x_{2j-1}}$ to ${x_{2j}}$ and note that
\begin{equation*}
H(z(\mathcal R({x_{2j-1}},2\pi)x))=\bigg(\frac{V(x)+\mu}{e^{2\pi i}(V(x)-\mu)}\bigg)^{1/4}
=-iH(z(x)),
\end{equation*}
thus $H(z(\hat x))=H(z(\mathcal R({x_{2j-1}},2\pi)x))$, which means that $\hat x=\mathcal R({x_{2j-1}},2\pi)x$. (For a fixed point $x$, $H(z({\hdot}))$ takes distinct values at each of the four points over $x$.) In the same way one proves the statement concerning the $\check x$-sheet, as well as the reverse statements when $V({x_{2j-1}})+\mu=0$. We omit the details.
\end{proof}

In order to compute $\mathcal W(\widetilde u_j^+,u_{j-1}^+)$ we want to express $\widetilde u_j^+$ in the coordinates of a different sheet over $y_j$, and continue the resulting function through the branch cut into the domain of $u_{j-1}^+$ in the usual sheet (containing the amplitude base point $y_{j-1}$), so that their domains intersect and \eqref{Wronskian1+} and Remark \ref{h-asymptotic} are applicable. 
Under the assumptions of Theorem \ref{thm:transition2} (transition $2^\circ$),
entering the branch cut starting at ${x_{2j}}$ from the left leads to the $\hat x$-sheet by Definition \ref{def:sheets}, so this means rewriting $\widetilde u_j^+$ in the coordinates of the $\hat x$-sheet. (The same is true for transition $4^\circ$; for transitions $1^\circ$ and $3^\circ$ it means rewriting $\widetilde u_j^+$ in the coordinates of the $\check x$-sheet.) Recall that $\widetilde u_j^+(x)=u^+(x;{x_{2j}},y_j)$, which by definition means that for $x$ near $y_j$ in the usual sheet we have
\begin{equation}\label{u_3plustype}
\widetilde u_j^+(x)
=e^{z(x;{x_{2j}})/h}\bigg( \begin{array}{cc} 1& 1\\ -1& 1\end{array} \bigg)Q(z(x))\bigg( \begin{array}{cc} 0 & 1\\ 1 & 0\end{array} \bigg)
w^+(x,h;y_j),
\end{equation}
where
\begin{equation*}
z(x)=z(x;{x_{2j}})=i\int_{x_{2j}}^x\sqrt{V(t)^2-\mu^2}\de t.
\end{equation*}
The change of variables $s=\mathcal R({x_{2j}},-2\pi)t$ gives
\begin{equation}\label{zchange}
z(x;{x_{2j}})=i\int_{x_{2j}}^{\hat x}\sqrt{e^{2i\pi}(V(s)^2-\mu^2)}\de s=-z(\hat x;{x_{2j}}).
\end{equation}
Using the shorthand $z=z(x)$, $\hat z=z(\hat x)$,
we have $H(z)=iH(\hat z)$ by \eqref{eq:Hhat}, so the definition of $Q(z)$ (see \eqref{defQ}) gives
\begin{equation}\label{Qchange}
Q(z)=-i\bigg( \begin{array}{cc} H(\hat z)^{-1} & H(\hat z)^{-1}\\ -iH(\hat z) & iH(\hat z)\end{array} \bigg)=-iQ(\hat z)\bigg( \begin{array}{cc} 0 & 1\\ 1 & 0\end{array} \bigg).
\end{equation}
Note that squaring the rightmost matrix gives the identity matrix.

Next, we express $w^+(x,h;y_j)$ in the coordinates of the $\hat x$-sheet. To this end, let $w_n^\pm(z)$, $n\ge1$, be a family of solutions to the transport equations \eqref{transportodd}--\eqref{transporteven}, satisfying the initial conditions $w_n^\pm(z_0)=0$ at $z_0=z(y_0)$. Set $f_n^\pm(\hat z)=w_n^\pm(z)$, $n\ge1$. Inspecting \eqref{transportodd}--\eqref{transporteven} and using $d/dz=-d/d\hat z$, it is easy to check that 
\begin{equation*}
\bigg(\frac{d}{d\hat z}\mp\frac{2}{h}\bigg)f^\pm_{2n+1}(\hat z)=\frac{dH(\hat z)/d\hat z}{H(\hat z)}f^\pm_{2n}(\hat z),
\end{equation*}
\begin{equation*}
\frac{d}{d\hat z}f^\pm_{2n+2}(\hat z)=\frac{dH(\hat z)/d\hat z}{H(\hat z)}f^\pm_{2n+1}(\hat z)
\end{equation*}
with $f_n^\pm(\hat z_0)=0$ at $\hat z_0=z(\hat y_0)$.
By uniqueness of solutions it follows that $w^\pm_n(z)=w^\mp_n(\hat z)$ with $w_n^\mp(z(\hat y_0))=0$, which, in view of \eqref{amplitude}, implies that
\begin{equation}\label{amplitudechange}
w^\pm(x,h;y_0)
=\sum_{n=0}^\infty 
\bigg( \begin{array}{c} w^\pm_{2n}(z;z_0)\\ w^\pm_{2n+1}(z;z_0)\end{array} \bigg)
=\sum_{n=0}^\infty 
\bigg( \begin{array}{c} w^\mp_{2n}(\hat z;\hat z_0)\\ w^\mp_{2n+1}(\hat z;\hat z_0)\end{array} \bigg)
=w^\mp(\hat x,h;\hat y_0).
\end{equation}
In particular, $w^+(x,h;y_j)=w^-(\hat x,h;\hat y_j)$, which together with  \eqref{u_3plustype}--\eqref{Qchange} gives
\begin{equation*}
\widetilde u_j^+(x)=-ie^{-z(\hat x;{x_{2j}})/h}\bigg( \begin{array}{cc} 1& 1\\ -1& 1\end{array} \bigg)Q(z(\hat x))
w^-(\hat x,h;\hat y_j)=-iu^-(\hat x;{x_{2j}},\hat y_j)
\end{equation*}
for $x$ near $y_j$. 
This proves the first part of the following lemma.

\begin{lemma}\label{rewritinglemma}
For each transition $1^\circ$ -- $4^\circ$, let $\hat x$ and $\check x$ be defined in accordance with Definition \ref{def:sheets}.
Then
\begin{align*}
u^\pm(x;{x_{2j}},y_j)&=-iu^\mp(\hat x;{x_{2j}},\hat y_j)=iu^\mp(\check x;{x_{2j}},\check y_j),\\
u^\mp(x;{x_{2j-1}},\bar y_j)&=-iu^\pm(\hat x;{x_{2j-1}},\hat{\bar y}_j)=iu^\pm(\check x;{x_{2j-1}},\check{\bar y}_j).
\end{align*}
\end{lemma}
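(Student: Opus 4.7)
The plan is to bootstrap from the computation already carried out in the paragraphs preceding the lemma, where the model identity $\widetilde u_j^+(x)=-iu^-(\hat x;x_{2j},\hat y_j)$ (transition $2^\circ$) has been established. I would extend it to the seven remaining identities in a single uniform argument, relying on three ingredients that all hold in every case thanks to the conventions in Definition \ref{def:sheets} and Lemma \ref{lemma:sheet}.

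First I would record the $\check x$-analogues of \eqref{zchange} and \eqref{Qchange}. Rotating in the opposite sense around $x_{2j}$ still multiplies whichever of $V\pm\mu$ vanishes there by $e^{2\pi i}$, so $\sqrt{V^2-\mu^2}$ picks up a sign and $z(x;x_{2j})=-z(\check x;x_{2j})$. Combined with \eqref{eq:Hcheck}, this yields
\begin{equation*}
Q(z)=iQ(\check z)\bigg(\begin{array}{cc} 0 & 1\\ 1 & 0\end{array} \bigg),
\end{equation*}
the analogue of \eqref{Qchange}. The identity $d/dz=-d/d\check z$ then lets the transport-equation argument from \eqref{amplitudechange} be repeated verbatim, giving $w^\pm(x,h;y_j)=w^\mp(\check x,h;\check y_j)$.

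Next, for each of the four identities in the first line, I would plug these three substitutions into the representation \eqref{exactWKB} of $u^\pm(x;x_{2j},y_j)$ and simplify. In the $u^+\to u^-$ transitions, the swap matrix $\bigg(\begin{array}{cc} 0 & 1\\ 1 & 0\end{array} \bigg)$ present in \eqref{exactWKB} for $u^+$ cancels the extra copy arising from the $Q$-substitution, producing a pure phase $\mp i$ and the desired $u^-$. In the $u^-\to u^+$ transitions, the swap matrix coming from the $Q$-substitution is the only one present, producing again the phase $\mp i$ and a genuine $u^+$; the sign of the phase is $-i$ on the $\hat x$-side and $+i$ on the $\check x$-side, as required.

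For the second line I would repeat the argument with $x_{2j-1}$ in place of $x_{2j}$; this is precisely the setting for which Lemma \ref{lemma:sheet} is stated, and its content is exactly that \eqref{eq:Hhat}--\eqref{eq:Hcheck} and the corresponding rotation conventions go through unchanged regardless of which of $V(x_{2j-1})\pm\mu$ vanishes. The fact that $\bar y_j$ sits in the lower half plane rather than the upper is immaterial for the formal computation. The main obstacle throughout is keeping track of sign conventions across four distinct transition types; however, Definition \ref{def:sheets} and Lemma \ref{lemma:sheet} absorb all case distinctions into the three ingredients above, so no case analysis is required beyond the transition $2^\circ$ model that has already been worked out in the text.
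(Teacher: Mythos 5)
Your proposal reproduces the paper's own argument: bootstrap from the model transition-$2^\circ$ computation preceding the lemma, record the $\check x$-analogues of \eqref{zchange} and \eqref{Qchange} (your $Q(z)=iQ(\check z)\left(\begin{smallmatrix}0&1\\1&0\end{smallmatrix}\right)$ is correct and accounts for the opposite phase $+i$), propagate the amplitudes via the same transport-equation symmetry as \eqref{amplitudechange}, and repeat for $x_{2j-1}$ with Lemma \ref{lemma:sheet} absorbing the case distinctions. This is precisely the route the paper takes, only written out more explicitly than the terse "same arguments with \eqref{eq:Hcheck} in place of \eqref{eq:Hhat}" in the original.
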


\begin{proof}
The arguments above show that $u^\pm(x;{x_{2j}},y_j)=-iu^\mp(\hat x;{x_{2j}},\hat y_j)$. Since $z(\hat x)=z(\check x)$, the identity $u^\pm(x;{x_{2j}},y_j)=iu^\mp(\check x;{x_{2j}},\check y_j)$ follows by using the same arguments except with \eqref{eq:Hcheck} used in place of \eqref{eq:Hhat}. It remains to prove the identities for the turning point ${x_{2j-1}}$.
But since $z(x;{x_{2j-1}})=-z(\hat x;{x_{2j-1}})$ (compare with \eqref{zchange}) and
\begin{equation*}
u^\mp(x;{x_{2j-1}},\bar y_j)=e^{\mp z(x;{x_{2j-1}})/h}\bigg( \begin{array}{cc} 1& 1\\ -1& 1\end{array} \bigg)Q(z(x))
\bigg( \begin{array}{cc} 0 & 1\\ 1 & 0\end{array} \bigg)^{(1\mp1)/2}w^\mp(x,h;\bar y_j),
\end{equation*}
the first identity is a consequence of \eqref{Qchange} and \eqref{amplitudechange}, while the second again follows by similar arguments except with \eqref{eq:Hcheck} used in place of \eqref{eq:Hhat}.
\end{proof}

It will be convenient to record the following observation.

\begin{lemma}\label{lemma:S}
Let $S_j(\mu)$ be given by \eqref{actionS}. Then
\begin{equation}\label{eq:S}
S_j(\mu)=i\int_{x_{2j-1}(\mu)}^{x_{2j}(\mu)}(V(t)^2-\mu^2)^{1/2}\de t.
\end{equation}
\end{lemma}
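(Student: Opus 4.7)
The plan is to pin down which imaginary value the branch of $(V^2-\mu^2)^{1/2}$ introduced in Section \ref{section:proofB} takes on the open segment $(x_{2j-1}(\mu),x_{2j}(\mu))$. On this segment $V^2-\mu^2$ is negative real, so the branch equals $\pm i(\mu^2-V^2)^{1/2}$; I will show the sign is always $-i$, whence $i\int_{x_{2j-1}}^{x_{2j}}(V^2-\mu^2)^{1/2}\de t = \int_{x_{2j-1}}^{x_{2j}}(\mu^2-V^2)^{1/2}\de t = S_j(\mu)$ by \eqref{actionS}, which is exactly \eqref{eq:S}.

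The core step is local at $x_{2j-1}$. Setting $\phi(t)=V(t)^2-\mu^2$, the same case analysis already used in Section \ref{section:proofB} to compute $\Arg W'$ shows that $V(x_{2j-1})$ and $V'(x_{2j-1})$ have opposite signs in each of the four transition types $1^\circ$--$4^\circ$ (indeed, $V^2>\mu^2$ just left of $x_{2j-1}$ and $V^2<\mu^2$ just right of it), hence $\phi'(x_{2j-1})=2V(x_{2j-1})V'(x_{2j-1})<0$. Since the cut emanating from $x_{2j-1}$ has argument $-\pi/3$, a small semicircle $t-x_{2j-1}=\ve e^{i\theta}$ with $\theta$ decreasing from $\pi$ to $0$ through $\pi/2$ remains in the usual sheet. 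Along this arc $\arg\phi(t)$ decreases from $0$ to $-\pi$, so $\arg(V^2-\mu^2)^{1/2}$ decreases from $0$ to $-\pi/2$, giving $(V^2-\mu^2)^{1/2}=-i(\mu^2-V^2)^{1/2}$ just right of $x_{2j-1}$. No cut intrudes on the open segment $(x_{2j-1},x_{2j})$, so the identity persists all the way to $x_{2j}$, which yields \eqref{eq:S}.

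The only remaining point is to verify that the branch is positive real immediately left of $x_{2j-1}$ for every $j$. For $j=1$ this is the defining normalisation, since $V^2>\mu^2$ on $[0,x_1(\mu))$. For $j\ge 2$ I would iterate by passing through the preceding even turning point $x_{2k}$: the same sign analysis gives $\phi'(x_{2k})=2V(x_{2k})V'(x_{2k})>0$, and since the cut at $x_{2k}$ points at argument $2\pi/3$, a semicircular detour \emph{below} the real axis (parameters $\tau$ increasing from $-\pi$ to $0$) stays in the usual sheet. Tracking the argument along this arc shows that $\arg(V^2-\mu^2)^{1/2}$ increases from $-\pi/2$ to $0$, exactly cancelling the $-i$ factor inherited from the segment $(x_{2k-1},x_{2k})$; the branch is therefore positive real immediately right of $x_{2k}$, restoring the inductive hypothesis for the calculation at $x_{2k+1}$. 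The main obstacle is thus purely a matter of careful branch bookkeeping; each individual step is an elementary Taylor expansion and a half-turn around a simple root.
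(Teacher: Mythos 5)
Your proof is correct and takes essentially the same approach as the paper: both track the argument of $(V^2-\mu^2)^{1/2}$ around the simple turning point $x_{2j-1}$ (via the upper half-plane to avoid the cut at argument $-\pi/3$) to conclude that the branch equals $-i(\mu^2-V^2)^{1/2}$ on $(x_{2j-1},x_{2j})$. You make explicit, by an induction through the preceding even turning points, the fact that the branch is positive real immediately to the left of each odd turning point, which the paper asserts without detailed justification; the one small omission is that you should state, as the paper does, that the computation is carried out for real $\mu=\mu_0$ and that the general case of $\mu$ in a complex neighborhood then follows since both sides of \eqref{eq:S} depend analytically on $\mu$.
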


\begin{proof}
It suffices to prove \eqref{eq:S} for $\mu=\mu_0$, for then the general result follows by continuity. 
To evaluate the right-hand side of \eqref{eq:S} when $\mu_0$ is real, note that the line segment from $x_{2j-1}(\mu_0)$ to $x_{2j}(\mu_0)$ is an interval on the real line. Next, recall that we have chosen a determination of $(V^2-\mu_0^2)^{1/2}$
so that it is positive at the origin, or in fact at any point $x\in\R$ in the same sheet as the origin where $V(x)^2-\mu_0^2>0$. 
In particular, $\Arg(V(s)^2-\mu_0^2)^{1/2}=0$ for real $s<x_{2j-1}$ close to $x_{2j-1}$. For real $t>x_{2j-1}$ close to $x_{2j-1}$ we write
\begin{equation}\label{eq:arg}
(V(s)^2-\mu_0^2)^{1/2}=(V(\mathcal R(x_{2j-1},\pi)t)^2-\mu_0^2)^{1/2}=(e^{i\pi}(V(t)^2-\mu_0^2))^{1/2}
\end{equation}
with $s<x_{2j-1}$ as above (rotation in the opposite direction is incorrect since it means passing through a branch cut).
Hence, the right-hand side of \eqref{eq:S} equals
\begin{equation*}
i\int_{x_{2j-1}}^{x_{2j}}(e^{-i\pi}e^{i\pi}(V(t)^2-\mu_0^2))^{1/2}\de t=
\int_{x_{2j-1}}^{x_{2j}}(e^{i\pi}(V(t)^2-\mu_0^2))^{1/2}\de t
\end{equation*}
where the integrand in the rightmost integral is non-negative by \eqref{eq:arg}. The lemma now follows by inspecting the definition of $S_j(\mu_0)$.
\end{proof}

\begin{proof}[Proof of Theorem \ref{thm:transition2}]
Recalling \eqref{t_ij}, we first compute $\mathcal W(\widetilde u_j^+,\widetilde u_j^-)$. 
According to the behavior of $\re z(x)$, 
there is a curve $\Gamma(y_j,\bar y_j)$ from $y_j$ to $\bar y_j$ along which $\re z(x)$ is strictly increasing. 
By evaluating the Wronskian at $\bar y_j$ (see \eqref{Wronskian1+}) 
we get 
\begin{equation}\label{W(u_3,u_4)}
\mathcal W(\widetilde u_j^+,\widetilde u_j^-)=4iw^+_\mathrm{even}(\bar y_j,h;y_j),
\end{equation}
where $w^+_\mathrm{even}(\bar y_j,h;y_j)=1+O(h)$ by Remark \ref{h-asymptotic}.

Now consider $\mathcal W(u_{j-1}^+,\widetilde u_j^-)$ and note that the phase base points of $u_{j-1}^+$ and $\widetilde u_j^-$ differ. We therefore rewrite $u_{j-1}^+$ as 
\begin{align}\notag
u_{j-1}^+(x)&=\exp\bigg(i\int_{x_{2j-1}}^{x_{2j}}\sqrt{V(t)^2-\mu^2}\de t/h\bigg)u^+(x;{x_{2j}},y_{j-1})
\\ & =e^{S_j/h}u^+(x;{x_{2j}},y_{j-1}),
\label{rewriteu_1}
\end{align}
where the last identity follows by virtue of Lemma \ref{lemma:S}. 
Since we can find a curve $\Gamma(y_{j-1},\bar y_j)$ along which $\re z(x)$ is strictly increasing we can evaluate the Wronskian at $\bar y_j$ (see \eqref{Wronskian1+}) and get
\begin{equation*}
\mathcal W(u_{j-1}^+,\widetilde u_j^-)=4ie^{S_j/h}w^+_\mathrm{even}(\bar y_j,h;y_{j-1}),
\end{equation*}
where $w^+_\mathrm{even}(\bar y_j,h;y_{j-1})=1+O(h)$ by Remark \ref{h-asymptotic}. Thus, by  \eqref{t_ij} and \eqref{W(u_3,u_4)} we have
\begin{equation*}
t_{11}=e^{S_j/h}a_j,\quad a_j=\frac{w^+_\mathrm{even}(\bar y_j,h;y_{j-1})}{w^+_\mathrm{even}(\bar y_j,h;y_j)}=1+O(h),\quad h\to0.
\end{equation*}
It is clear that $a_j$ depends analytically on $\lambda$ since this is true for the amplitude functions $w^+_\mathrm{even}$.

We now compute $\mathcal W(\widetilde u_j^+,u_{j-1}^+)$. 
By Lemma \ref{rewritinglemma} we have $\widetilde u_j^+(x)=-iu^-(\hat x;{x_{2j}},\hat y_j)$ for $\hat x$ near $\hat y_j$. 
Take the function on the right and continue it through the branch cut starting at ${x_{2j}}$ into the domain in the usual sheet containing $y_{j-1}$. We remark that at $y_{j-1}$ it takes the value $-iu^-(y_{j-1};{x_{2j}},\hat y_j)$. Similarly, writing $u_{j-1}^+(x)=e^{S_j/h}u^+(x;{x_{2j}},y_{j-1})$ as before, we see that $u_{j-1}^+(\hat y_j)=e^{S_j/h}u^+(\hat y_j;{x_{2j}},y_{j-1})$, so by evaluating the Wronskian at $\hat y_j$ (see \eqref{Wronskian1+}) we get
\begin{equation*}
\mathcal W(\widetilde u_j^+,u_{j-1}^+)=
-\mathcal W(u_{j-1}^+,\widetilde u_j^+)=-4e^{S_j/h}w^+_\mathrm{even}(\hat y_j,h;y_{j-1}).
\end{equation*}
Since $\re z(\hat x)$ is a strictly increasing function of $\im \hat x$ near $\hat x=\hat y_j$, we can find a curve from $y_{j-1}$ to $\hat y_j$, passing through the branch cut at ${x_{2j}}$, along which $t\mapsto \re z(t)$ is strictly increasing (compare with Figure \ref{figure2}). 
Hence, by \eqref{t_ij}, \eqref{W(u_3,u_4)} and Remark \ref{h-asymptotic} we get 
\begin{equation*}
t_{21}=
ie^{S_j/h}c_j,\quad c_j=\frac{w^+_\mathrm{even}(\hat y_j,h;y_{j-1})}{w^+_\mathrm{even}(\bar y_j,h;y_j)}=1+O(h),\quad h\to0.
\end{equation*}

Let us consider $\mathcal W(u_{j-1}^-,\widetilde u_j^-)$ next. 
We fix the domain of $u_{j-1}^-$ and express $\widetilde u_j^-$ in the coordinates of the sheet reached when passing through the branch cut at ${x_{2j-1}}$ from the left. For transition $2^\circ$ this is the $\hat x$-sheet according to Definition \ref{def:sheets}.
First note that
\begin{equation*}
\widetilde u_j^-(x)=\exp\bigg(i\int_{x_{2j-1}}^{x_{2j}}\sqrt{V(t)^2-\mu^2}\de t/h\bigg)u^-(x;{x_{2j-1}},\bar y_j)=e^{S_j/h}u^-(x;{x_{2j-1}},\bar y_j),
\end{equation*}
compare with \eqref{rewriteu_1}. Applying Lemma \ref{rewritinglemma} we get
\begin{equation*}
\widetilde u_j^-(x)=-ie^{S_j/h}u^+(\hat x;{x_{2j-1}},\hat{\bar y}_j).
\end{equation*}
We continue the expression on the right through the branch cut at ${x_{2j-1}}$, and note that at $\bar y_{j-1}$, it takes the value $-ie^{S_j/h}u^+(\bar y_{j-1};{x_{2j-1}},\hat{\bar y}_j)$. As above we can find a curve from $\hat{\bar y}_j$ to $\bar y_{j-1}$, passing through the branch cut at ${x_{2j-1}}$, along which $t\mapsto\re z(t)$ is strictly increasing. Hence, by evaluating the Wronskian at $\bar y_{j-1}$ (see \eqref{Wronskian1+}) we obtain
\begin{equation*}
\mathcal W(u_{j-1}^-,\widetilde u_j^-)=-4e^{S_j/h}w^+_\mathrm{even}(\bar y_{j-1},h;\hat{\bar y}_j).
\end{equation*}
In view of \eqref{t_ij}, \eqref{W(u_3,u_4)} and Remark \ref{h-asymptotic} we conclude that 
\begin{equation*}
t_{12}=ie^{S_j/h}b_j,\quad b_j=\frac{w^+_\mathrm{even}(\bar y_{j-1},h;\hat{\bar y}_j)}{w^+_\mathrm{even}(\bar y_j,h;y_j)}=1+O(h),\quad h\to0.
\end{equation*}

Finally, let us consider $\mathcal W(\widetilde u_j^+,u_{j-1}^-)$. To get an asymptotic estimate we will need to connect the amplitude base points of $\widetilde u_j^+$ and $u_{j-1}^-$ by a curve passing through both the branch cut at ${x_{2j-1}}$ {\it and} the branch cut at ${x_{2j}}$. In view of Definition \ref{def:sheets} this will be possible if we (in the present case of transition $2^\circ$) express $\widetilde u_j^+$ in the coordinates obtained by rotating anticlockwise {\it twice} around ${x_{2j}}$. Let the coordinates thus obtained be denoted $\hat{\hat x}$. Applying Lemma \ref{rewritinglemma} two times we get
\begin{equation*}
\widetilde u_j^+(x)=-u^+(\hat{\hat x};{x_{2j}},\hat{\hat y}_j)=-e^{(z(\hat{\hat x};{x_{2j}})-z(\hat{\hat x};{x_{2j-1}}))/h}u^+(\hat{\hat x};{x_{2j-1}},\hat{\hat y}_j)
\end{equation*}
for $x$ near $y_j$. 
We can find a path from $\hat{\hat y}_j$ to $\bar y_{j-1}$ along which $t\mapsto \re z(t)$ is strictly increasing (compare with Figure \ref{figure2}), so evaluating the Wronskian at $\bar y_{j-1}$ we get (see \eqref{Wronskian1+})
\begin{equation*}
\mathcal W(\widetilde u_j^+,u_{j-1}^-)=-4ie^{(z(\bar y_{j-1};{x_{2j}})-z(\bar y_{j-1};{x_{2j-1}}))/h}w^+_\mathrm{even}(\bar y_{j-1},h;\hat{\hat y}_j),
\end{equation*}
where $w^+_\mathrm{even}(\bar y_{j-1},h;\hat{\hat y}_j)=1+O(h)$ as $h\to0$ by Remark \ref{h-asymptotic}.
Here, 
\begin{equation}\label{eq:homotopy}
z(\bar y_{j-1};{x_{2j}})-z(\bar y_{j-1};{x_{2j-1}})= i\bigg(\int_{x_{2j}}^{\bar y_{j-1}}+\int_{\bar y_{j-1}}^{x_{2j-1}}\bigg)\sqrt{V(t)^2-\mu^2}\de t,
\end{equation}
where the path of integration is homotopic to a curve starting at ${x_{2j}}$ and following a curve in the $\hat x$-sheet through the branch cut at ${x_{2j-1}}$, then arriving at ${x_{2j-1}}$ via $\bar y_{j-1}$. In other words, it is homotopic to the path from ${x_{2j}}$ to ${x_{2j-1}}$ in the $\hat x$-sheet. 
If we rotate back to the usual sheet using \eqref{eq:rotationformula} and then reverse the integration direction, 
we find that
\begin{align*}
z(\bar y_{j-1};{x_{2j}})-z(\bar y_{j-1};{x_{2j-1}})&= i\int_{x_{2j}}^{x_{2j-1}}\sqrt{e^{2i\pi}(V(t)^2-\mu^2)}\de t
\\ & =i\int_{x_{2j-1}}^{x_{2j}}\sqrt{V(t)^2-\mu^2}\de t
\end{align*}
which by Lemma \ref{lemma:S} is equal to the action integral $S_j$ given by \eqref{actionS}. 
Recalling \eqref{t_ij}, \eqref{W(u_3,u_4)} we get
\begin{equation*}
t_{22}=-e^{S_j/h}d_j,\quad d_j=\frac{w^+_\mathrm{even}(\bar y_{j-1},h;\hat{\hat{y}}_{j-1})}{w^+_\mathrm{even}(\bar y_j,h;y_j)}=1+O(h),\quad h\to0,
\end{equation*}
which completes the proof.
\end{proof}

We now turn to the matrices of transition $1^\circ$, $3^\circ$ and $4^\circ$.

\begin{theorem}\label{thm:transition3}
Let $\widetilde T_j$ be the transition matrix defined by \eqref{tildeT_j}.
If $\widetilde T_j$ is of type $3^\circ$ then
\begin{equation*}
\widetilde T_j(\lambda,h)=e^{S_j(\mu)/h}
\bigg(\begin{array}{cc} a_j & -i b_j \\ -ic_j & -d_j\end{array}\bigg),
\end{equation*}
where $a_j,b_j,c_j,d_j$ depend analytically on $\lambda$ in $B_\ve(\lambda_0)$ and are there equal to $1+O(h)$ uniformly as $h\to0$, and where $S_j(\mu)$ is the action integral given by \eqref{actionS}.
\end{theorem}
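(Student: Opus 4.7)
The plan is to adapt the proof of Theorem \ref{thm:transition2} step by step, making the appropriate sheet-orientation substitutions forced by the fact that in type $3^\circ$ the roles of $V-\mu$ and $V+\mu$ at the two turning points $x_{2j-1}$ and $x_{2j}$ are swapped relative to type $2^\circ$. As before, I would use \eqref{t_ij} to write each entry of $\widetilde T_j$ as a ratio of Wronskians, and compute each Wronskian from \eqref{Wronskian1+} after applying Lemma \ref{rewritinglemma} to convert one of the two like-type solutions into the opposite type, together with Lemma \ref{lemma:S} to extract the factor $e^{S_j/h}$ arising when shifting phase base points between $x_{2j-1}$ and $x_{2j}$.

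The computations of $\mathcal W(\widetilde u_j^+,\widetilde u_j^-)=4i+O(h)$ and of $t_{11}=e^{S_j/h}(1+O(h))$ transfer verbatim from type $2^\circ$, since both rely only on the behavior of $\re z(x)$ in the usual sheet and on Lemma \ref{lemma:S}, neither of which is sensitive to the transition type. For $t_{12}$, $t_{21}$, and $t_{22}$ one must continue solutions across branch cuts, and it is here that the distinction enters: although the cuts themselves are placed in the same positions (governed by the parity of the turning-point index), the $\hat x$-/$\check x$-sheet labels at $x_{2j-1}$ and $x_{2j}$ are both reversed by virtue of the directional flip in Definition \ref{def:sheets}. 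Consequently the sheet whose analytic continuation across the relevant cut lands in the usual-sheet region containing $\bar y_{j-1}$ (for $t_{12}$) or $y_{j-1}$ (for $t_{21}$) is the $\check x$-sheet in type $3^\circ$, whereas it was the $\hat x$-sheet in type $2^\circ$. Invoking the $\check x$-identity in Lemma \ref{rewritinglemma} multiplies the rewritten solution by $+i$ rather than $-i$, and combining this with the remaining factor $-4i/(4i)$ from the Wronskian and its denominator yields $t_{12}=-ie^{S_j/h}(1+O(h))$ and $t_{21}=-ie^{S_j/h}(1+O(h))$, as required. For $t_{22}$, two sheet reversals occur, and since $(+i)^2=(-i)^2=-1$ the entry remains $t_{22}=-e^{S_j/h}(1+O(h))$; the homotopy argument that converts $z(\bar y_{j-1};x_{2j})-z(\bar y_{j-1};x_{2j-1})$ into $S_j$ via \eqref{eq:rotationformula} is unaffected, as the $e^{\pm 2\pi i}$ factor inside the relevant square root contributes the same sign regardless of the direction of rotation.

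The main obstacle is the geometric verification that, within the appropriate $\check x$-sheet(s), one can still connect the amplitude base point of the rewritten solution to the chosen evaluation point by a curve along which $t\mapsto \re z(t)$ is strictly increasing, so that Remark \ref{h-asymptotic} applies and gives $w^+_\mathrm{even}=1+O(h)$. Because the arguments of the Stokes lines at $x_{2j-1}$ and $x_{2j}$ depend only on $\Arg W'$, which takes the values $0$ or $\pi$ in both types, the Stokes configuration is unchanged between types $2^\circ$ and $3^\circ$, and the behavior of $\re z$ on the $\check x$-sheet mirrors its behavior on the $\hat x$-sheet. Consequently the required monotone curves exist in the analogous regions by the same transversality and continuity considerations used in the proof of Theorem \ref{thm:transition2}, and the analyticity of the resulting $a_j, b_j, c_j, d_j$ in $\lambda$ follows as before from the analytic dependence of the amplitude functions $w^+_\mathrm{even}$ on $\lambda$.
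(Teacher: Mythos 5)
Your proposal is correct and follows essentially the same approach as the paper: the paper's own proof also proceeds by direct comparison with the type $2^\circ$ case, observing that the directional flip in Definition \ref{def:sheets} (caused by $V\mp\mu$ swapping roles at $x_{2j-1}$ and $x_{2j}$) means $\widetilde u_j^+$ and $\widetilde u_j^-$ must be rewritten via the $\check x$-sheet rather than the $\hat x$-sheet, which by Lemma \ref{rewritinglemma} changes the sign of $t_{12}$ and $t_{21}$, while $t_{11}$ is unchanged and $t_{22}$ is unchanged because $\check{\check x}=\hat{\hat x}$. Your observations about the Stokes geometry being insensitive to the transition type and about the homotopy argument for $S_j$ being rotation-direction independent match what the paper relies on implicitly.
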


\begin{proof}
Write $\widetilde T_j=(t_{mn})$, then $t_{mn}$ is given by \eqref{t_ij}. The proof continues in almost identical fashion to the proof of Theorem \ref{thm:transition2}. The only difference is that when rewriting $\widetilde u_j^+$ and $\widetilde u_j^-$ in order to compute the Wronskians $\mathcal W(\widetilde u_j^+,u_{j-1}^+)$ and $\mathcal W(u_{j-1}^-,\widetilde u_j^-)$, respectively, we do so using the coordinates of the $\check x$-sheet instead of the $\hat x$-sheet, see the discussion preceding \eqref{u_3plustype}. In view of Lemma \ref{rewritinglemma}, this results in the loss of a factor $-1$ in the formulas for $t_{21}$ and $t_{12}$. To compute $\mathcal W(\widetilde u_j^+,u_{j-1}^-)$ we apply Lemma \ref{rewritinglemma} twice, and since $\check{\check x}=\hat{\hat x}$, this leads to the same calculations as in the proof of Theorem \ref{thm:transition2}.
\end{proof}

\begin{theorem}\label{thm:transition1}
Let $\widetilde T_j$ be the transition matrix defined by \eqref{tildeT_j}.
If $\widetilde T_j$ is of type $1^\circ$ then
\begin{equation*}
\widetilde T_j(\lambda,h)=e^{S_j(\mu)/h}\bigg(\begin{array}{cc} a_j & i b_j \\ -ic_j & d_j\end{array}\bigg),
\end{equation*}
where $a_j,b_j,c_j,d_j$ depend analytically on $\lambda$ in $B_\ve(\lambda_0)$ and are there equal to $1+O(h)$ uniformly as $h\to0$, and where $S_j(\mu)$ is the action integral given by \eqref{actionS}.
\end{theorem}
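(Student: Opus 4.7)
The plan is to mirror the proof of Theorem \ref{thm:transition2} step by step, writing $\widetilde T_j = (t_{mn})$ and computing each entry through the Wronskian formulas \eqref{t_ij}. As there, the denominator $\mathcal W(\widetilde u_j^+, \widetilde u_j^-) = 4i w^+_\mathrm{even}(\bar y_j, h; y_j) = 4i(1+O(h))$ is immediate from \eqref{Wronskian1+} and Remark \ref{h-asymptotic}, so the task reduces to evaluating the four numerator Wronskians.

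For $t_{11}$ nothing new happens: rewrite $u_{j-1}^+ = e^{S_j/h} u^+(x; x_{2j}, y_{j-1})$ via Lemma \ref{lemma:S} and apply \eqref{Wronskian1+} at $\bar y_j$ to get $t_{11} = a_j e^{S_j/h}$ with $a_j = 1 + O(h)$. For $t_{12}$, since $V(x_{2j-1}) - \mu = 0$ holds in both transition types $1^\circ$ and $2^\circ$, the sheet labeling at $x_{2j-1}$ is unchanged and Lemma \ref{rewritinglemma} yields $\widetilde u_j^-(x) = -i e^{S_j/h} u^+(\hat x; x_{2j-1}, \hat{\bar y}_j)$ exactly as in Theorem \ref{thm:transition2}, producing $t_{12} = i b_j e^{S_j/h}$.

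The first genuine difference from Theorem \ref{thm:transition2} appears in $t_{21}$. Because $V(x_{2j}) - \mu = 0$ at the ``even'' turning point in transition $1^\circ$, Definition \ref{def:sheets} applies in its ``directions reversed'' form, so the sheet reached by continuing $\widetilde u_j^+$ through the cut at $x_{2j}$ toward the domain of $u_{j-1}^+$ is the $\check x$-sheet rather than the $\hat x$-sheet. Invoking Lemma \ref{rewritinglemma} then gives $\widetilde u_j^+(x) = i u^-(\check x; x_{2j}, \check y_j)$, with the prefactor $+i$ in place of the $-i$ of the $\hat x$-sheet case; the rest of the computation is identical, and the sign flip of the prefactor propagates to $t_{21} = -i c_j e^{S_j/h}$.

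For $t_{22}$ the argument again follows Theorem \ref{thm:transition2}: pass from the amplitude base point of $\widetilde u_j^+$ to $\bar y_{j-1}$ by combining two applications of Lemma \ref{rewritinglemma}, and reduce the resulting exponential factor $e^{(z(\bar y_{j-1}; x_{2j}) - z(\bar y_{j-1}; x_{2j-1}))/h}$ to $e^{S_j/h}$ by a homotopy argument together with Lemma \ref{lemma:S}. The main obstacle is the correct bookkeeping of the overall sign of the prefactor coming from the two sheet transitions in transition $1^\circ$: at $x_{2j}$ the reversed sheet convention is in force while at $x_{2j-1}$ it is not, so the product of the $\pm i$ factors picked up from the two invocations of Lemma \ref{rewritinglemma}—together with the determinations of $(V^2-\mu^2)^{1/2}$ along the path through $\bar y_{j-1}$—has to be shown to produce an overall $+1$, rather than the $-1$ of Theorem \ref{thm:transition2}. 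Once this bookkeeping is carried out one obtains $t_{22} = d_j e^{S_j/h}$ with $d_j = 1 + O(h)$, and the proof is complete.
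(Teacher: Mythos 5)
Your treatment of $t_{11}$, $t_{12}$, and $t_{21}$ is correct and matches the paper's reasoning: $t_{11}$ is uniform across all transition types, $t_{12}$ reduces to the transition~$2^\circ$ computation since $x_{2j-1}$ has the same nature, and $t_{21}$ reduces to the transition~$3^\circ$ computation since $x_{2j}$ has the same nature, producing the sign flip to $-ic_j$.

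The $t_{22}$ argument, however, has a genuine gap, and you have flagged the gap without resolving it. You propose to mimic the transition~$2^\circ$ strategy by applying Lemma~\ref{rewritinglemma} twice to $\widetilde u_j^+$ and hope that the $\pm i$ bookkeeping will deliver an overall $+1$ instead of $-1$. But two applications of Lemma~\ref{rewritinglemma} always produce a factor of $-1$: you either apply $\hat{\phantom{x}}$ twice (giving $(-i)^2$), apply $\check{\phantom{x}}$ twice (giving $i^2$), or apply $\hat{\phantom{x}}$ and $\check{\phantom{x}}$ (giving $+1$, but returning to the original sheet and making no progress). Worse, after a double rewrite the amplitude base point $\hat{\hat{y}}_j=\check{\check{y}}_j$ lies two sheets away from the usual sheet, and for transition~$1^\circ$ that sheet is \emph{not} connected to $\bar y_{j-1}$ by a curve along which $\re z$ is increasing, so \eqref{Wronskian1+} together with Remark~\ref{h-asymptotic} cannot be invoked. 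The correct and much shorter route, which the paper takes, is to observe that precisely because both $x_{2j-1}$ and $x_{2j}$ are zeros of $V-\mu$, the sheet transitions cancel: a path from $y_j$ through the cut at $x_{2j}$ (entering the $\hat x$-sheet) and through the cut at $x_{2j-1}$ re-enters the usual sheet and ends at $\bar y_{j-1}$ with $\re z$ strictly increasing throughout. Hence $\mathcal W(\widetilde u_j^+,u_{j-1}^-)$ can be evaluated \emph{directly} from \eqref{Wronskian1+} at $\bar y_{j-1}$ with no invocation of Lemma~\ref{rewritinglemma} at all, and the exponential prefactor reduces to $e^{S_j/h}$ by the homotopy argument already established in \eqref{eq:homotopy}. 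This yields $t_{22}=e^{S_j/h}d_j$ with $d_j=1+O(h)$ cleanly, whereas your proposed method cannot reach the claimed positive sign.
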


\begin{proof}
Write $\widetilde T_j=(t_{mn})$, then $t_{mn}$ is given by \eqref{t_ij}. The computation of $t_{11}$ is the same as for transitions $2^\circ$ and $3^\circ$. Since the nature of ${x_{2j-1}}$ is the same as in transition $2^\circ$, the computation of $t_{12}$ (rewriting $\widetilde u_j^-$ by rotating around ${x_{2j-1}}$) is the same as for transition $2^\circ$. Since the nature of ${x_{2j}}$ is the same as in transition $3^\circ$, the computation of $t_{21}$ (rewriting $\widetilde u_j^+$ by rotating around ${x_{2j}}$) is the same as for transition $3^\circ$. 
For $t_{22}$, we simply compute $\mathcal W(\widetilde u_j^+,u_{j-1}^-)$ directly from \eqref{Wronskian1+} by evaluating the Wronskian at $\bar y_{j-1}$ (the amplitude base point of $u_{j-1}^-$) and obtain
\begin{equation*}
\mathcal W(\widetilde u_j^+,u_{j-1}^-)=4ie^{(z(\bar y_{j-1};{x_{2j}})-z(\bar y_{j-1};{x_{2j-1}}))/h}w_\mathrm{even}^+(\bar y_{j-1},h;y_j).
\end{equation*}
In fact, for transition $1^\circ$ there is, in view of Definition \ref{def:sheets}, a path $\Gamma(y_j,\bar y_{j-1})$ from $y_j$ to $\bar y_{j-1}$ along which $\re z(x)$ is strictly increasing: starting at $y_j$ in the usual sheet, $\Gamma(y_j,\bar y_{j-1})$ goes through  the branch cut at ${x_{2j}}$ from the right (thereby entering the $\hat x$-sheet), then goes through the branch cut at ${x_{2j-1}}$ from the right, reentering the usual sheet and then ending at $\bar y_{j-1}$. In view of the discussion following \eqref{eq:homotopy}, this implies that $z(\bar y_{j-1};{x_{2j}})-z(\bar y_{j-1};{x_{2j-1}})=S_j$, which gives
\begin{equation*}
t_{22}=\frac{\mathcal W(\widetilde u_j^+,u_{j-1}^-)}{\mathcal W(\widetilde u_j^+,\widetilde u_j^-)}=e^{S_j/h}\frac{w^+_\mathrm{even}(\bar y_{j-1},h;y_j)}{w^+_\mathrm{even}(\bar y_j,h;y_j)}.
\end{equation*}
The proof is complete.
\end{proof}

\begin{theorem}\label{thm:transition4}
Let $\widetilde T_j$ be the transition matrix defined by \eqref{tildeT_j}.
If $\widetilde T_j$ is of type $4^\circ$ then
\begin{equation*}
\widetilde T_j(\lambda,h)=e^{S_j(\mu)/h}\bigg(\begin{array}{cc} a_j & -i b_j \\ ic_j & d_j\end{array}\bigg),
\end{equation*}
where $a_j,b_j,c_j,d_j$ depend analytically on $\lambda$ in $B_\ve(\lambda_0)$ and are there equal to $1+O(h)$ uniformly as $h\to0$, and where $S_j(\mu)$ is the action integral given by \eqref{actionS}.
\end{theorem}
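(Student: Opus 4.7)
The proof closely mirrors those of Theorems \ref{thm:transition2}, \ref{thm:transition3}, and especially \ref{thm:transition1}. Writing $\widetilde T_j=(t_{mn})$ with entries given by \eqref{t_ij}, the handling of $t_{11}$ is identical to the three previous cases: after rewriting $u_{j-1}^+(x)=e^{S_j/h}u^+(x;x_{2j},y_{j-1})$ via Lemma \ref{lemma:S} and evaluating $\mathcal W(u_{j-1}^+,\widetilde u_j^-)$ at $\bar y_j$, Remark \ref{h-asymptotic} yields $t_{11}=e^{S_j/h}a_j$ with $a_j=1+O(h)$.

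For $t_{12}$ and $t_{21}$, the nature of each turning point individually dictates which Riemann sheet is used when rewriting the auxiliary WKB solutions via Lemma \ref{rewritinglemma}. Since $V(x_{2j-1})+\mu=0$, exactly as in transition $3^\circ$, I would rewrite $\widetilde u_j^-$ using the $\check x$-sheet; the remainder of the computation then replicates the corresponding step in the proof of Theorem \ref{thm:transition3} and produces $t_{12}=-ie^{S_j/h}b_j$ with $b_j=1+O(h)$. Symmetrically, since $V(x_{2j})+\mu=0$, as in transition $2^\circ$, rewriting $\widetilde u_j^+$ using the $\hat x$-sheet reproduces the computation from Theorem \ref{thm:transition2} and yields $t_{21}=ie^{S_j/h}c_j$ with $c_j=1+O(h)$.

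The substantive step is the computation of $t_{22}$, and here the argument parallels Theorem \ref{thm:transition1} rather than Theorems \ref{thm:transition2} or \ref{thm:transition3}: because both $x_{2j-1}$ and $x_{2j}$ are zeros of the same factor $V+\mu$, my plan is to compute $\mathcal W(\widetilde u_j^+,u_{j-1}^-)$ directly from \eqref{Wronskian1+} by evaluating at $\bar y_{j-1}$, rather than iterating Lemma \ref{rewritinglemma}. What I need is a single path $\Gamma(y_j,\bar y_{j-1})$ along which $\re z(x)$ is strictly increasing, and the sheet conventions dictate its structure: the path should start at $y_j$ in the usual sheet, cross the branch cut at $x_{2j}$ from the left to enter the $\hat x$-sheet (since $V(x_{2j})+\mu=0$ implies that left-entry gives $\hat x$, by Definition \ref{def:sheets}), and then cross the branch cut at $x_{2j-1}$ from the left back into the usual sheet (since, by the reversed convention of Lemma \ref{lemma:sheet} applied to $V(x_{2j-1})+\mu=0$, anticlockwise rotation takes the $\hat x$-sheet to the usual one)---the opposite side from the one used in Theorem \ref{thm:transition1}. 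Granting existence of such a path, the homotopy argument following \eqref{eq:homotopy} together with Lemma \ref{lemma:S} identifies $z(\bar y_{j-1};x_{2j})-z(\bar y_{j-1};x_{2j-1})$ with $S_j$, so that $\mathcal W(\widetilde u_j^+,u_{j-1}^-)=4ie^{S_j/h}w^+_{\mathrm{even}}(\bar y_{j-1},h;y_j)$, yielding $t_{22}=e^{S_j/h}d_j$ with $d_j=1+O(h)$, as claimed. The main obstacle is thus geometric rather than analytic: one must verify existence and $\re z$-monotonicity of this direct path by analyzing the Stokes line arguments at two consecutive zeros of $V+\mu$ via \eqref{eq:simpledoubleargs}, which differ from those of transition $1^\circ$ precisely through the sign of $W'=-2VV'$ at the turning points, and it is this sign flip that produces $+d_j$ in $t_{22}$ instead of $-d_j$.
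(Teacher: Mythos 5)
Your strategy is essentially identical to the paper's, and the final formula is correct, but two details in the $t_{22}$ step are wrong (harmlessly, as it turns out).

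First, the sheet is wrong. The geometry of the path $\Gamma(y_j,\bar y_{j-1})$ is the same as in transition $1^\circ$: the curve crosses the branch cut at $x_{2j}$ from the right, as in the proof of Theorem~\ref{thm:transition1}. Since $V(x_{2j})+\mu=0$ in transition~$4^\circ$, Definition~\ref{def:sheets} is in its unreversed form, so right-entry leads to the $\check x$-sheet, not the $\hat x$-sheet as you claim. (Likewise the path crosses the cut at $x_{2j-1}$ from the right, re-entering the usual sheet.) This does not affect $t_{22}$, because whether one carries an $e^{+2\pi i}$ or $e^{-2\pi i}$ under the square root along the homotoped path, the net effect is the same factor $e^{\pm i\pi}=-1$, so $z(\bar y_{j-1};x_{2j})-z(\bar y_{j-1};x_{2j-1})=S_j$ either way. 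But the claim as written contradicts both the path geometry and the definition.

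Second, the closing explanation is off. The arguments $\Arg W'$ at the turning points are $0,\pi,0,\pi,\ldots$ depending only on the index parity, regardless of whether the zeros come from $V-\mu$ or $V+\mu$: at an odd-indexed $x_{2j-1}$ with $V(x_{2j-1})=-\mu$ one has $V'>0$ there and $W'=-2VV'=2\mu V'>0$, matching $\Arg W'=0$, and similarly for even indices. So the Stokes configuration for transition~$4^\circ$ does \emph{not} differ from that of transition~$1^\circ$ ``through the sign of $W'$.'' Moreover, both $1^\circ$ and $4^\circ$ produce $+d_j$ in the $(2,2)$-entry; the distinction between $+d_j$ and $-d_j$ is governed by whether $x_{2j-1}$ and $x_{2j}$ are zeros of the same factor (transitions $1^\circ,4^\circ$) or opposite factors (transitions $2^\circ,3^\circ$) — a point you do identify correctly earlier — not by a sign change in $W'$. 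The sign flips you do need to track are in $t_{12}$ and $t_{21}$, which you handle correctly.
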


\begin{proof}
Write $\widetilde T_j=(t_{mn})$, then $t_{mn}$ is given by \eqref{t_ij}. The computations of $t_{11}$ and $t_{22}$ are the same as for transition $1^\circ$ (except that for $t_{22}$, the path from $y_j$ to $\bar y_{j-1}$ goes via the $\check x$-sheet instead), while the computations of $t_{12}$ and $t_{21}$ are reversed: 
Since the nature of ${x_{2j-1}}$ is the same as in transition $3^\circ$, the computation of $t_{12}$ (rewriting $\widetilde u_j^-$ by rotating around ${x_{2j-1}}$) is the same as for transition $3^\circ$. Since the nature of ${x_{2j}}$ is the same as in transition $2^\circ$, the computation of $t_{21}$ (rewriting $\widetilde u_j^+$ by rotating around ${x_{2j}}$) is the same as for transition $2^\circ$.
\end{proof}

\section*{Acknowledgements}

The research of Setsuro Fujii{\'e} was supported by the JSPS Kakenhi Grant No. JP15K04971.
The research of Jens Wittsten was supported by the Knut och Alice Wallenbergs Stiftelse Grant No.~2013.0347.

\bibliographystyle{amsplain}
\bibliography{references}

\end{document}